\newtheorem{theorem}{Theorem}[section]
\newtheorem{lemma}[theorem]{Lemma}
\theoremstyle{definition}
\newtheorem*{remark}{Remark}
\numberwithin{equation}{section}
\begin{document}
\title[Loewner chains on Riemann surfaces]{Loewner chains on the universal covering space of a Riemann surface}

\author[J. Tsai]{Jonathan Tsai}
\address{Department of Mathematics \\ Chinese University of Hong Kong \\ Shatin \\ New Territories \\ Hong Kong}
\email{jtsai@math.cuhk.edu.hk}
\thanks{The author would like to express his gratitude to K. S. Lau for funding his postdoctoral research. In addition, the author would like to thank his PhD examiners Alan Beardon, Phil Rippon and his PhD supervisor Keith Carne for their helpful suggestions without which this paper could not have been written.}

\subjclass[2000]{Primary 30C20; Secondary 30F35, 60H30}

\begin{abstract}
Let $R$ be a hyperbolic Riemann surface with boundary $\partial R$ and suppose that $\gamma:[0,T]\rightarrow R\cup\partial R$ is a simple curve with $\gamma(0,T]\subset R$ and $\gamma(0)\in\partial R$. By lifting $R_{t}=R\setminus \gamma(0,t]$ to the universal covering space of $R$ (which we assume is the upper half-plane $\mathbb{H}=\{z\in\mathbb{C}:\mathrm{Im}[z]>0\}$) via the covering map $\pi:\mathbb{H}\rightarrow R$, we can define a family of simply-connected domains $D_{t}=\pi^{-1}(R_{t})\subset\mathbb{H}$. For each $t\in[0,T]$, suppose that $f_{t}$ is a conformal map of $\mathbb{H}$ onto $D_{t}$ such that $f(z,t)=f_{t}(z)$ is differentiable almost everywhere in $(0,T)$ with respect to $t$. In this paper, we will derive a differential equation that describes how $f(z,t)$ evolves in time $t$. This should be viewed as an extension of the Loewner differential equation to curves on Riemann surfaces with boundary.

The motivation of this paper is the desire to extend Schramm's stochastic Loewner evolution to multiply-connected domains and Riemann surfaces.
\end{abstract}

\maketitle
\section{Introduction}
In 1923, Charles Loewner (then known as Karl L\"{o}wner) invented a method of studying conformal mappings onto domains slit by a growing curve (see  \cite{MR1512136}). More recently, interest in the Loewner differential equation has been rekindled by Oded Schramm's invention of stochastic Loewner evolution (SLE) in \cite{MR1776084}. A key ingredient in Schramm's work is a version of the Loewner differential equation called the chordal Loewner differential equation: let $\mathbb{H}=\{z\in\mathbb{C}:\mathrm{Im}[z]>0\}$ denote the upper half-plane and suppose that $\gamma:[0,T]\rightarrow \overline{\mathbb{H}}$ is a simple curve with $\gamma(0)\in\mathbb{R}$ and $\gamma(0,T]\subset\mathbb{H}$. Then the Riemann mapping theorem implies that there exists a family of conformal maps $\{f_{t}\}_{t\in[0,T]}$ such that each $f_{t}$ is a conformal map of $\mathbb{H}$ onto $\mathbb{H}\setminus \gamma(0,t]$ and $f_{t}$ satisfies the hydrodynamic normalization: $f_{t}(\infty)=\infty$ and
\[f_{t}(z)=z-\frac{c(t)}{z} + O\left(\frac{1}{z^{2}}\right) \text{ as } z\rightarrow \infty.\]
Then $c(t)$ is strictly increasing and positive thus we can reparameterize such that $c(t)=2t$ (the quantity $c(t)$ is known as the half-plane capacity of $\gamma(0,t]$). With this normalization and parametrization, the function $f(z,t)=f_{t}(z)$ is differentiable with respect to $t$ and moreover
\[\dot{f}_{t}(z)=\frac{-2f_{t}'(z)}{z-\xi(t)},\]
where $\xi(t)=f_{t}^{-1}(\gamma(t))$. Here and throughout this paper we will use the notation
\[\dot{f}_{t}(z)=\frac{\partial}{\partial t} f_{t}(z) \text{ and } f_{t}'(z)=\frac{\partial}{\partial z} f_{t}(z).\]This differential equation is called the \emph{chordal Loewner differential equation} and $\xi(t)$ is called the \emph{chordal driving function} of the curve $\gamma$. Chordal stochastic Loewner evolution with parameter $\kappa$ is defined to be the random family of conformal maps $f_{t}$ obtained as the solution of the Loewner differential equation  with driving function $\sqrt{\kappa}B_{t}$ for some $\kappa\geq 0$, where $B_{t}$ is standard 1-dimensional Brownian motion. It turns out that, in some sense, SLE are the only conformally invariant 2-dimensional processes. This is important because many lattice models in probability and statistical physics are conjectured to have conformally invariant scaling limit. Since its invention, many previously intractable problems in the field have been solved rigorously using SLE. For full details on the construction of the chordal Loewner differential equation and SLE, see \cite{MR2129588} and \cite{MR2079672}.

One of the problems in the area is how to define SLE on a multiply-connected domain or Riemann surface (see \cite{MR2334202}). There are several papers which consider this problem: in \cite{MR2128237} and \cite{MR2268538}, Dapeng Zhan looks at SLE in an annulus defined via the Schwarz kernel formula. Also, in his PhD thesis, \cite{Zhanthesis}, Dapeng Zhan invented a measure on curves on Riemann surfaces which in the simply-connected case, corresponds to SLE. Finally, in \cite{MR2230350}, Robert Bauer and Roland Friedrich define SLE on planar multiply-connected domains via Komatu's generalization of the Loewner differential equation to multiply-connected domains given in \cite{MR0046437}. In this paper we will introduce a Loewner differential equation on the covering space of a Riemann surface with boundary. This can be used to define SLE on Riemann surfaces.

\ \\
More specifically, we will do the following: suppose that $R$ is a multiply-connected domain or Riemann surface with boundary $\partial R$, then the uniformization theorem implies that $R$ is conformally equivalent to $\mathbb{H}/\Gamma$ where $\Gamma$ is a Fuchsian group that preserves $\mathbb{H}$ and contains no elliptic M\"{o}bius transformations (see \cite[Chapter 10]{MR0357743}). Suppose that $\gamma:[0,T]\rightarrow R\cup\partial R$ is a simple curve such that $\gamma(0)\in \partial R$ and $\gamma(0,T]\subset R$. Let $\pi:\mathbb{H}\rightarrow R$ be the covering map such that group of deck transformations (automorphisms) of $\pi$ is $\Gamma$. We can lift $\gamma$ via $\pi$ to a curve $\widetilde{\gamma}:[0,T]\rightarrow\mathbb{H}$ with $\widetilde{\gamma}(0,T]\subset\mathbb{H}$ and $\widetilde{\gamma}(0)\in\mathbb{R}$. Then
\[K_{t}=\pi^{-1}(\gamma(0,t])=\bigcup_{\phi\in\Gamma}\phi(\widetilde{\gamma}(0,t]).\]
Moreover, for $t\in[0,T]$, $\mathbb{H}\setminus K_{t}$ is a simply-connected domain so by the Riemann mapping theorem, there exists a family of conformal bijections $f_{t}$ that map $\mathbb{H}$ conformally onto $\mathbb{H}\setminus K_{t}$. For each $t \in[0,T]$, the function $\pi\circ f_{t}:\mathbb{H}\rightarrow R\setminus \gamma(0,t]$ is a universal covering map of $R\setminus \gamma(0,t]$. Also, the family of functions $f_{t}$ induces a family of Fuchsian groups $\Gamma_{t}=f_{t}^{-1}\circ\Gamma\circ f_{t}=\{f_{t}^{-1}\circ\phi\circ f_{t}:\phi\in\Gamma\}$ such that $R\setminus\gamma(0,t]$ is conformally equivalent to $\mathbb{H}/\Gamma_{t}$.  We will see later that if we normalize the functions $f_{t}$ appropriately, then $f(z,t)=f_{t}(z)$ is differentiable with respect to $t$ at least for almost every $t\in(0,T)$. We will call the family of functions $\{f_{t}\}_{t\in[0,T]}$ a \emph{Loewner chain corresponding to the curve $\gamma$ on $R$}. We will prove that in the case where $R$ is a finite Riemann surface (so that $\Gamma$ is finitely-generated), $f_{t}$ satisfies a version of the Loewner differential equation. Without loss of generality, we will always assume that $R=\mathbb{H}/\Gamma$ and $\pi:\mathbb{H}\rightarrow\mathbb{H}/\Gamma$ is the quotient map; the result for general Riemann surfaces follows from conformal equivalence.

\begin{theorem}\label{genslitRS}
Suppose that $\Gamma$ is a finitely-generated Fuchsian group preserving $\mathbb{H}$ such that $R=\mathbb{H}/\Gamma$ is a Riemann surface with boundary $\partial R$.

Let $\gamma:[0,T]\rightarrow R\cup \partial R$ be a simple curve such that $\gamma(0,T]\subset R$ and $\gamma(0)\in\partial R$. Let $\{f_{t}\}$ be a Loewner chain corresponding to the curve $\gamma$ on $R$ and let $\Gamma_{t}=f_{t}^{-1}\circ\Gamma\circ f_{t}$. Also let $\phi_{1,t},\ldots,\phi_{n,t}$ be the hyperbolic free generators of $\Gamma_{t}$ such that each $\phi_{k,t}$ is differentiable with respect to $t$ for almost all $t\in(0,T)$. Suppose that we can find $c\in\mathbb{R}$ such that $c$ is not in the limit set of $\Gamma_{t}$ for all $t\in[0,T]$. Then, $f_{t}$ satisfies
\[\dot{f}_{t}(z)=f_{t}'(z)P(z,t),\]
where
\normalsize\[P(z,t)=\frac{\lambda(t)+\sum_{\phi_{t}\in\Gamma_{t}}\left[\frac{1}{\phi_{t}(z)-\xi(t)} + \sum_{k=1}^{n} \delta_{k}(t)\left(\frac{\dot{\phi}_{t}(z)-\dot{\phi}_{k,t}(c)}{\phi_{t}(z)-\phi_{k,t}(c)}-\frac{\dot{\phi}_{t}(z)}{\phi_{t}(z)-c} \right)  \right]}{\sum_{\phi_{t}\in\Gamma_{t}}\sum_{k=1}^{n}\delta_{k}(t)\left(\frac{\phi_{t}'(z)}{\phi_{t}(z)-\phi_{k,t}(c)}-\frac{\phi_{t}'(z)}{\phi_{t}(z)-c}\right)}\]\normalsize
Here, $\xi(t)$ satisfies $\pi\circ f_{t}^{-1}(\xi(t))=\gamma(t)$; $\lambda(t)$ and $\delta_{1}(t),\ldots,\delta_{n}(t)$ are real-valued functions.
\end{theorem}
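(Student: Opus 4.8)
The plan is to construct, for fixed $z$, the vector field $P(z,t)$ as the unique automorphic (with respect to $\Gamma_t$) analogue of the Loewner vector field on $\mathbb{H}$, and then verify that $\dot f_t = f_t' P$ is forced by the geometry. First I would set $g_t = f_t^{-1}$ and work on the $\mathbb{H}\setminus K_t$ side. Differentiating the identity $g_t\circ f_t = \mathrm{id}$ gives $\dot f_t(z) = -f_t'(z)\,\dot g_t(f_t(z))$, so it suffices to show that $w\mapsto \dot g_t(w)$, which is a holomorphic vector field on $\mathbb{H}\setminus K_t$, equals $-P(g_t(w),t)$ composed appropriately; equivalently, that $Q(w,t) := \dot g_t(w)$ descends to $R\setminus\gamma(0,t]$ up to the prescribed correction terms. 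The key structural facts are: (i) since $\phi_t = g_t\circ\phi\circ f_t$ for $\phi\in\Gamma$, differentiating in $t$ shows $\dot\phi_t(z) = Q(\phi(f_t(z)),t)\,\text{[...]} - \phi_t'(z)\,Q(f_t(z),t)$-type relations, i.e. $Q$ must satisfy the \emph{cocycle/automorphy} condition $Q\circ\phi_t \cdot \phi_t' - Q = \dot\phi_t$ for every $\phi_t\in\Gamma_t$; and (ii) as $w$ approaches the tip of the slit, $Q(w,t)$ has a simple pole corresponding to the point $\xi(t)$, with residue fixed by the Loewner-type normalization (capacity parametrization), while $Q$ is otherwise holomorphic on $\mathbb{H}$ and real on $\mathbb{R}\setminus(\text{limit set})$.

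The heart of the argument is then a uniqueness-and-existence statement for such $Q$: among holomorphic functions on $\mathbb{H}$ with a single simple pole at $\xi(t)$ of prescribed residue, satisfying the inhomogeneous automorphy relation (i) for a finitely generated $\Gamma_t$, and real on an arc of $\mathbb{R}$, the solution is unique up to a finite-dimensional family, and it can be written explicitly. The building blocks are Poincaré-type series: $\sum_{\phi_t\in\Gamma_t}\frac{1}{\phi_t(z)-\xi(t)}$ supplies the pole and is formally automorphic of weight $0$, while the combinations $\frac{\phi_t'(z)}{\phi_t(z)-\phi_{k,t}(c)}-\frac{\phi_t'(z)}{\phi_t(z)-c}$ are the standard trick (difference of two kernels at a point $c$ outside the limit set and its image under the generator $\phi_{k,t}$) that produces convergent relative Poincaré series whose transformation law under $\Gamma_t$ picks up exactly the generator-dependent terms $\dot\phi_{k,t}(c)$. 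The functions $\lambda(t),\delta_1(t),\ldots,\delta_n(t)$ are the real parameters labeling the finite-dimensional ambiguity: $\lambda(t)$ is the residue/normalization constant and each $\delta_k(t)$ weights the correction attached to the $k$th generator, chosen so that the \emph{denominator} in $P$ — itself such a relative Poincaré series — makes the ratio genuinely $\Gamma_t$-automorphic of the correct weight, i.e. descends to a vector field on $R\setminus\gamma(0,t]$. I would verify term by term that $\dot\phi_t$ computed from $P$ via (i) matches the actual $\dot\phi_{k,t}$ on generators, which pins down the $\delta_k(t)$ and $\lambda(t)$.

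Concretely the steps are: (1) reduce to computing $\dot g_t$ and record the automorphy cocycle relation satisfied by $\dot g_t$; (2) establish local behavior near the slit tip from the Loewner/capacity normalization to fix the pole and its residue; (3) show absolute convergence of the relative Poincaré series appearing in numerator and denominator, using finite generation of $\Gamma$ and the hypothesis that $c$ (hence a neighborhood) stays off the limit set of every $\Gamma_t$ — this is where finiteness of the surface is essential; (4) check that $P(z,t)$ as written satisfies the correct transformation law under each generator $\phi_{k,t}$, so that $f_t'(z)P(z,t)$ transforms like $\dot f_t$; (5) invoke uniqueness (a Liouville-type argument on the quotient: two solutions differ by a holomorphic automorphic vector field on the compact-type quotient with controlled boundary behavior, hence lie in the span of the listed finite basis) to conclude $\dot f_t = f_t' P$. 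The main obstacle I anticipate is step (3)–(4): proving convergence of these Poincaré series for a general finitely generated Fuchsian group (which may contain parabolics and have infinite covolume, so the surface has boundary and funnels) and simultaneously getting the transformation law to close up exactly, with the $\delta_k(t)$ absorbing precisely the generator cocycles and nothing more. Handling the parabolic cusps and the boundary arcs of $\mathbb{R}$ in the limit set — ensuring the construction is insensitive to which fundamental domain / which lift $\widetilde\gamma$ one picks — is the delicate technical point; the condition that $c$ avoid all limit sets $\Lambda(\Gamma_t)$, $t\in[0,T]$, is exactly what makes the difference-kernel regularization legitimate and uniform in $t$.
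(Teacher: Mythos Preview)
Your overall architecture matches the paper: establish the cocycle relation $P(\phi_t(z),t)=\phi_t'(z)P(z,t)-\dot\phi_t(z)$, pin down the simple poles of $P$ at the $\Gamma_t$-orbit of $\xi(t)$, build a candidate from Poincar\'e-type series regularized by differences at $c$ and $\phi_{k,t}(c)$, and finish with a uniqueness argument on the quotient. That is exactly the paper's route.

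Two points where your description drifts from what actually happens. First, the paper does \emph{not} verify the transformation law of the displayed quotient directly. Instead it introduces character-automorphic products
\[
\Psi_{k,t}(z)=\prod_{\phi_t\in\Gamma_t}\Bigl(\tfrac{z-\phi_t\circ\phi_{k,t}(c)}{z-\phi_t(c)}\Bigr)^{2},
\qquad \Psi_t=\prod_k \Psi_{k,t}^{\delta_k(t)},
\]
and forms $\Xi_t(z)=\tfrac{1}{2}\tfrac{\Psi_t'(z)}{\Psi_t(z)}P(z,t)-\sum_k\delta_k(t)\tfrac{\dot\Psi_{k,t}(z)}{\Psi_{k,t}(z)}$. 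The transformation law of $\Xi_t$ under $\Gamma_t$ involves only the time-derivatives $\tfrac{d}{dt}\log\chi_{k,t}(\rho_t)$ of the characters. One then compares $\Xi_t$ with the explicit sum $\Upsilon_t(z)=\sum_{\phi_t}\bigl(\tfrac{1}{\phi_t(z)-\xi(t)}-\tfrac{1}{\phi_t(c)-\xi(t)}\bigr)$, and the $\delta_k(t)$ are determined by the \emph{linear system} that forces $\Xi_t-\Upsilon_t$ to be genuinely automorphic (not, as you wrote, to make ``the ratio automorphic''). The nontrivial step is showing this system is always solvable, which amounts to proving that certain vectors $X_k(t)\in\mathbb{R}^n$ built from the characters are linearly independent; the paper does this by showing that a dependence would force some $\Psi_t^*\equiv 1$, contradicting freeness of the generators and discreteness of $\Gamma_t$.

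Second, $\lambda(t)$ is not a residue or capacity normalization. The residue of $P$ at $\xi(t)$ is fixed by the geometry (it equals $-\dot c(t)/\beta_t'(\xi(t))^2<0$ in the paper's notation). Rather, once $\Xi_t-\Upsilon_t$ is shown to be a simple automorphic function with at most one simple pole in a fundamental domain, the uniqueness input is that such a function must be \emph{constant} (a nonconstant one-pole automorphic function would biholomorphically identify $\widehat{\mathbb{C}}/\Gamma_t$ with $\widehat{\mathbb{C}}$). That constant is $\lambda^*(t)$, and solving $\Xi_t=\Upsilon_t+\lambda^*(t)$ for $P$ yields the displayed formula, with the denominator equal to $\tfrac{1}{2}\Psi_t'/\Psi_t$. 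Your ``Liouville-type'' intuition is right, but the correct statement is this one-pole rigidity on the compact quotient $\widehat{\mathbb{C}}/\Gamma_t$, not a boundedness argument on $\mathbb{H}$.
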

We will see that the functions $\delta_{1}(t),\ldots,\delta_{n}(t)$ in Theorem \ref{genslitRS} can be found by solving a system of linear equations (Lemma \ref{lem8}) and they ensure that the sum in the numerator of $P(z,t)$ converges.

The idea of the proof of Theorem \ref{genslitRS} is as follows: we first study the properties of the function
\[P(z,t)=\frac{\dot{f}_{t}(z)}{f_{t}'(z)}.\]
 We will then construct a function that satisfies these properties. The theory of automorphic functions (see \cite{Ford} or \cite{MR0164033}) will then guarantee that our function only differs from $P(z,t)$ by the function $\lambda(t)$.

We will also show that, as in the case of the radial Loewner differential equation and the chordal Loewner differential equation, the converse is also true as well i.e. given functions $\xi(t)$ and $\lambda(t)$, we can solve the differential equation
\[\dot{g}_{t}(z)=-P(g_{t}(z),t) \text{ with initial condition } g_{0}(z)\equiv z,\]
where $P(z,t)$ is given in Theorem \ref{genslitRS}, to get a family of conformal maps $\{g_{t}\}$. Each $g_{t}$ maps a domain $H_{t}\subset\mathbb{H}$ conformally onto $\mathbb{H}$. In addition, $H_{t}$ is $\Gamma$-invariant (i.e. $\phi(H_{t})=H_{t}$ for all $\phi\in\Gamma$) and hence if $\pi:\mathbb{H}\rightarrow R$ denotes the universal covering map whose automorphism group if $\Gamma$, then $\{\pi(H_{t})\}$ is a family of sub-Riemann surfaces of $R$ and $\pi\circ g_{t}^{-1}:\mathbb{H}\rightarrow \pi(H_{t})$ is a universal covering map. The additional difficulty in our situation compared to the simply-connected case is that we need to be able to determine the family of Fuchsian groups $\{\Gamma_{t}\}$ beforehand in order to be able to solve the differential equation to find the conformal mappings $g_{t}$. This corresponds to the fact that unlike in the simply-connected case, the Teichm\"{u}ller space of $R$ is non-trivial. This is related to \cite[Section 6]{MR2230350}.

\ \\
The rest of this paper is structured as follows:
\begin{enumerate}
\item[\S 2:] We will recall without proof some facts from the theory of Fuchsian groups and automorphic functions.
\item[\S 3:] We  will look at some properties that the function $P(z,t)$ satisfies.
\item[\S 4:] The results from the previous section will be used to prove Theorem \ref{genslitRS}
\item[\S 5:] We will consider the special case where $\Gamma$ is generated by a single hyperbolic M\"{o}bius transformation and hence $\mathbb{H}\setminus\Gamma$ is conformally equivalent to an annulus. In this case, the differential equation has a simpler form.
\item[\S 6:] We will prove that the converse of Theorem \ref{genslitRS}.
\item[\S 7:] Finally, we will make some brief comments on how stochastic Loewner evolution on Riemann surfaces can be defined using our results.
\end{enumerate}
\section{Fuchsian groups and automorphic functions}
In this section, we will recall some basic definitions and results which we will need in this paper. References to the literature will be provided where appropriate.

Recall that a M\"{o}bius transformations $\phi:\widehat{\mathbb{C}}\rightarrow\widehat{\mathbb{C}}$ (where $\widehat{\mathbb{C}}$ denotes the Riemann sphere $\mathbb{C}\cup\{\infty\}$) is a function of the form
\[\phi(z)=\frac{az+b}{cz+d} \text{ for } a,b,c,d\in\mathbb{C} \text{ with } ad-bc\neq 0.\]

We say that a M\"{o}bius transformation $\phi$ preserves the upper half-plane $\mathbb{H}$ if $\phi(\mathbb{H})=\mathbb{H}$. An equivalent condition for $\phi$ to preserve $\mathbb{H}$ is $a,b,c,d\in\mathbb{R}$. Note that the set of all M\"{o}bius transformations that preserve $\mathbb{H}$ form a group under composition which we denote $\mathcal{M}$.

We can classify the M\"{o}bius transformations that preserve $\mathbb{H}$ by their fixed points (i.e. points $c\in\widehat{\mathbb{C}}$ such that $\phi(c)=c$). If $\phi$ is a M\"{o}bius transformation that preserves $\mathbb{H}$, then
\begin{enumerate}
\item $\phi$ is \emph{elliptic} if it has exactly 2 fixed points in $\widehat{\mathbb{C}}$ that are not contained in $\partial\mathbb{H}=\mathbb{R}\cup\{\infty\}$ ;
\item $\phi$ is \emph{parabolic} if it has a unique fixed point in $\partial\mathbb{H}$;
\item $\phi$ is \emph{hyperbolic} if it has exactly 2 fixed points in $\partial\mathbb{H}$.
\end{enumerate}
These are the only possibilities for $\phi$ (see \cite[p. 67]{MR1393195}).

\ \\
A \emph{Fuchsian group} $\Gamma$ is a subgroup of $\mathcal{M}$ such that the induced topology on $\Gamma$ from the topology of local uniform convergence on $\mathcal{M}$ is the discrete topology i.e. the group is \emph{discrete} (see \cite[Section 2.3]{MR1393195}). The limit set of a Fuchsian group $\Gamma$ is the closure of the set of all fixed points of hyperbolic M\"{o}bius transformations contained in $\Gamma$. An equivalent definition for the limit set $\Lambda$ is the following: for $z\in\mathbb{H}$, $\Lambda$ is the set of all $w$ such that $\phi_{n}(z)\rightarrow w$ where $(\phi_{n})$ is a  sequence of distinct M\"{o}bius transformations in $\Gamma$.
This does not depend on the choice of $z\in\mathbb{H}$. See \cite[p.96-98]{MR1393195} for more details. From the first definition, it is clear that $\Lambda\subseteq \partial\mathbb{H}$. If $\Lambda\neq \partial\mathbb{H}$, then we say that $\Gamma$ is a \emph{Fuchsian group of the second kind}. If $\Gamma$ is a Fuchsian group of the second kind, then it can also be shown that if $x\in\mathbb{R}\setminus\Lambda$, then the set of limit points of $x$ is $\Lambda$ (see \cite[Theorem 5.3.9]{MR1393195}). We will need the following result.
\begin{lemma}\label{lempoin}
If $\Gamma$ is a Fuchsian group of the second kind such that $\infty\not\in\Lambda$, then the series
\[\sum_{\phi\in\Gamma} |\phi'(z)|\]
converges locally uniformly in $\widehat{\mathbb{C}}\setminus\Lambda$.
\end{lemma}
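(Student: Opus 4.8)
The plan is to reduce the statement to the convergence of $\sum_{\phi\in\Gamma}|\phi'(x_0)|$ at a single point $x_0\in\mathbb{R}\setminus\Lambda$, and then to spread this to locally uniform convergence on $\widehat{\mathbb{C}}\setminus\Lambda$ by a M\"obius distortion (Harnack-type) comparison. Observe first that, since $\Lambda$ is closed in $\widehat{\mathbb{C}}$ and $\infty\notin\Lambda$, the limit set $\Lambda$ is a \emph{compact}, hence proper, subset of $\mathbb{R}$; in particular $\mathbb{R}\setminus\Lambda\neq\varnothing$. I shall use two standard facts about Fuchsian groups (see \cite[Ch.~5]{MR1393195}): $\Gamma$ acts properly discontinuously on the ordinary set $\widehat{\mathbb{C}}\setminus\Lambda$, and the $\Gamma$-orbit of any point of $\widehat{\mathbb{R}}\setminus\Lambda$ accumulates only on $\Lambda$ (the latter was already recalled above for points of $\mathbb{R}\setminus\Lambda$).

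The first step is to produce a ``free interval''. No element of $\Gamma\setminus\{\mathrm{id}\}$ fixes a point of $\mathbb{R}\setminus\Lambda$, because a M\"obius transformation preserving $\mathbb{H}$ and fixing a point of $\partial\mathbb{H}$ is parabolic or hyperbolic, and its fixed point(s) then lie in $\Lambda$. Combining this with proper discontinuity at a chosen $x_0\in\mathbb{R}\setminus\Lambda$, I can choose a nondegenerate closed interval $I\subset\mathbb{R}\setminus\Lambda$ with $x_0\in I$ and $\phi(I)\cap I=\varnothing$ for every $\phi\in\Gamma\setminus\{\mathrm{id}\}$; hence the intervals $\{\phi(I)\}_{\phi\in\Gamma}$ are pairwise disjoint. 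Since the $\Gamma$-images of the compact set $I$, and also the orbit of $\infty$, accumulate only on $\Lambda$, there is a finite set $F\subset\Gamma$ and constants $\rho>0$, $M>0$ such that for every $\phi\notin F$ one has $\phi(I)\subset[-M,M]$, $|\phi^{-1}(\infty)|\le M$ and $\mathrm{dist}\!\big(\phi^{-1}(\infty),I\big)\ge\rho$.

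Next I would carry out the distortion estimate. Normalising $\phi(z)=(az+b)/(cz+d)$ with $a,b,c,d\in\mathbb{R}$, $ad-bc=1$, one has $\phi'(z)=(cz+d)^{-2}$, so $|\phi'(x)|=|c|^{-2}|x-p_\phi|^{-2}$ with $p_\phi=-d/c=\phi^{-1}(\infty)$. For $\phi\notin F$ and $x,x_0\in I$ the bounds above give
\[
\frac{|\phi'(x)|}{|\phi'(x_0)|}=\frac{|x_0-p_\phi|^{2}}{|x-p_\phi|^{2}}\le\Big(1+\frac{\mathrm{diam}(I)}{\rho}\Big)^{2}=:C ,
\]
and symmetrically the ratio is $\ge C^{-1}$; hence $|\phi(I)|=\int_I|\phi'(x)|\,dx\ge (|I|/C)\,|\phi'(x_0)|$. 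Summing over $\phi\notin F$ and using that the $\phi(I)$ are pairwise disjoint subintervals of $[-M,M]$ gives $\sum_{\phi\notin F}|\phi'(x_0)|\le (C/|I|)\sum_{\phi\notin F}|\phi(I)|\le 2MC/|I|<\infty$, whence $\sum_{\phi\in\Gamma}|\phi'(x_0)|<\infty$.

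Finally, given a compact $K\subset\widehat{\mathbb{C}}\setminus\Lambda$, the same computation comparing $|\phi'(z)|$ for $z\in K$ with $|\phi'(x_0)|$ shows $\sup_{z\in K}|\phi'(z)|\le C_K|\phi'(x_0)|$ for all $\phi$ outside a finite set (those whose pole $\phi^{-1}(\infty)$ has not yet entered a fixed neighbourhood of $\Lambda$), so $\sum_{\phi}|\phi'(z)|$ converges uniformly on $K$; the finitely many remaining terms are harmless, read via the spherical derivative at the finitely many points of $K$, if any, lying in the $\Gamma$-orbit of $\infty$. I expect the main obstacle to be the second step: arranging the translates $\phi(I)$ to be \emph{simultaneously} pairwise disjoint, eventually confined to a bounded interval, and of bounded distortion. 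This is precisely where the hypotheses are used --- $\Gamma$ of the second kind to have a free interval, and $\infty\notin\Lambda$ to keep $\Lambda$, the translates $\phi(I)$, and the orbit of $\infty$ bounded --- everything else being routine M\"obius calculus.
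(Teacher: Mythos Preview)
Your argument is correct. The paper itself gives no proof but simply refers the reader to Ford, \emph{Automorphic Functions}, \S47, and what you have written is precisely the classical free-interval argument carried out there: pick an interval $I\subset\mathbb{R}\setminus\Lambda$ whose $\Gamma$-translates are pairwise disjoint, use bounded distortion to compare $|\phi'(x_0)|$ with the length $|\phi(I)|$, and sum the lengths of disjoint subintervals of a bounded set. So your approach coincides with the cited one.

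One cosmetic remark on the last paragraph: the appeal to the spherical derivative is unnecessary. Since $\infty\notin\Lambda$, the stabiliser of $\infty$ in $\Gamma$ is trivial, so every $\phi\neq\mathrm{id}$ has $c\neq0$ and $|\phi'(z)|=|cz+d|^{-2}\to0$ as $z\to\infty$; thus for $z$ in a neighbourhood of $\infty$ the terms are already dominated by their values at any fixed finite point, and uniform convergence there is immediate.
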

\begin{proof}
See \cite[Section 47]{Ford}.
\end{proof}
Note that if $\Gamma$ is a Fuchsian group of the second kind, then $\widehat{\mathbb{C}}/\Gamma$ is a Riemann surface (see \cite[Theorem 6.2.1]{MR1393195}).

\ \\
Let $\Gamma$ be a Fuchsian group of the second kind with limit set $\Lambda$. Suppose that $\Psi$ is a meromorphic function on $\mathbb{H}$. We say that $\Psi$ is \emph{character-automorphic with respect to $\Gamma$} if
\[\Psi(\rho(z))=\chi(\rho)\Psi(z) \text{ for all }\rho\in\Gamma\]
for some $\chi(\rho)\in\mathbb{C}$. $\chi$ is called the \emph{character} of $\Psi$ and it is easy to see that it satisfies
\[\chi(\rho_{1}\circ\rho_{2})=\chi(\rho_{1})\chi(\rho_{2})\]
for any $\rho_{1},\rho_{2}\in\Gamma$.

A character-automorphic function is an \emph{automorphic} function if $\chi(\rho)\equiv 1$. We will be interested in a special class of automorphic functions: $\Psi$ is a \emph{simple automorphic function with respect to $\Gamma$} if it satisfies
\begin{enumerate}
\item[(i)] $\Psi$ is meromorphic function on $\widehat{\mathbb{C}}\setminus\Gamma$;
 \item[(ii)] $\Psi$ is automorphic with respect to $\Gamma$;
 \item[(iii)] if $p$ is the fixed point of a parabolic M\"{o}bius transformation in $\Gamma$, then the limit of $\Psi(z)$ as $z\rightarrow p$ inside a fundamental region of $\Gamma$ (see \cite[p. 37]{Ford}) exists (the limit may be infinite).
\end{enumerate}
Note that any simple automorphic function defines a meromorphic function on the Riemann surface $\widehat{\mathbb{C}}/\Gamma$. We will need the following important results on simple automorphic functions.
\begin{lemma}\label{lemauto}
Suppose that $\Gamma$ is a Fuchsian group of the second kind with fundamental region $F$ and $\Psi$ is a simple automorphic function with respect to $\Gamma$. Then
\begin{enumerate}
\item[(i)] the number of zeroes of $\Psi$ contained in $F$ is the same as the number of poles of $\Psi$ contained in $F$ (counting multiplicities);
\item[(ii)] if $\Psi$ is not constant, then $\Psi$ takes every complex value the same number of times in each fundamental region;
\item[(iii)] if $\Psi$ has no poles in $F$, then $\Psi$ is constant;
\item[(iv)] if $\Psi$ has exactly one pole in $F$, then $\Psi$ is constant.
\end{enumerate}
\end{lemma}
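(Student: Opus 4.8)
The plan is to push $\Psi$ down to a compact Riemann surface and then read off (i)--(iv) from the elementary theory of meromorphic functions on a compact surface, in the style of \cite{Ford}. Since $\Gamma$ is of the second kind, the ordinary set $\widehat{\mathbb{C}}\setminus\Lambda$ --- the set written $\widehat{\mathbb{C}}\setminus\Gamma$ in the definition --- is $\Gamma$-invariant and, as already recalled via \cite[Theorem 6.2.1]{MR1393195}, its quotient by $\Gamma$ is a Riemann surface; when $\Gamma$ is finitely generated this quotient has only finitely many ideal points, namely the images of the parabolic fixed points of $\Gamma$. Condition (ii) of the definition of a simple automorphic function says precisely that $\Psi$ descends to a meromorphic function on this quotient, and condition (iii) says that at each parabolic cusp $\Psi$ has a (possibly infinite) limit taken inside a fundamental region; written in the local uniformizing parameter at the cusp, this is exactly the condition needed for $\Psi$ to extend meromorphically across the puncture. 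Filling in the cusps produces a compact Riemann surface $X$ together with a meromorphic map $\bar\Psi\colon X\to\widehat{\mathbb{C}}$, and --- with ``in $F$'' understood to include the cusps of the fundamental region --- the zeros and poles of $\Psi$ in $F$ correspond, with multiplicities, to the zeros and poles of $\bar\Psi$ on $X$.

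Given this reduction, part (i) is the statement that a meromorphic function on a compact Riemann surface has as many zeros as poles counted with multiplicity; I would deduce it by applying the residue theorem on $X$ to the meromorphic $1$-form $d\bar\Psi/\bar\Psi$, whose only singularities are simple poles, located at the zeros and poles of $\bar\Psi$, with residues equal to the order of the zero and minus the order of the pole respectively, so that the vanishing of the sum of residues gives $(\#\text{zeros})=(\#\text{poles})$. (Classically, as in \cite{Ford}, one argues directly in the plane: taking $F$ to be a fundamental region for $\Gamma$ acting on $\widehat{\mathbb{C}}\setminus\Lambda$, so that $\partial F$ is made up of $\Gamma$-congruent pairs of sides traversed in opposite senses, one integrates the $\Gamma$-invariant differential $\frac{1}{2\pi i}\,(\Psi'(z)/\Psi(z))\,dz$ around $\partial F$; invariance makes each pair of sides cancel, so the integral vanishes, and the argument principle gives the count.) Part (ii) is then immediate: for $a\in\mathbb{C}$ the function $\Psi-a$ is again simple automorphic --- meromorphic, automorphic, and with limit $(\lim\Psi)-a$ at each cusp --- and has exactly the same poles as $\Psi$, so by (i) the number of solutions of $\Psi(z)=a$ in $F$ equals the number of poles of $\Psi$ in $F$, for every $a$, including $a=\infty$; when $\Psi$ is nonconstant this common value is the positive integer $\deg\bar\Psi$.

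Part (iii) is the case $\deg\bar\Psi=0$ of (ii): if $\Psi$ has no pole in $F$ then by (ii) it omits every value $a\in\mathbb{C}$, and a meromorphic function that omits a value is constant (equivalently, $\bar\Psi$ is then holomorphic on the compact surface $X$, hence constant by the maximum principle). For part (iv), if $\Psi$ has a single simple pole in $F$ then $\deg\bar\Psi=1$, so $\bar\Psi\colon X\to\widehat{\mathbb{C}}$ is a degree-one holomorphic map, hence a conformal isomorphism; but in the setting relevant here $\Gamma$ contains a hyperbolic element, so $X=\widehat{\mathbb{C}}/\Gamma$ has genus at least $1$ --- already a torus when $\Gamma$ is generated by a single hyperbolic transformation, and of larger genus when further hyperbolic generators are adjoined --- and no such isomorphism exists; therefore there is no nonconstant simple automorphic function with a single simple pole, which is the assertion of (iv).

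The only genuinely delicate step is the reduction carried out in the first paragraph: one must check carefully that condition (iii) really yields a meromorphic extension of $\Psi$ across each cusp, and that the zero/pole bookkeeping in $F$ --- including at the cusps, and with the usual precaution of perturbing $\partial F$ off the zeros and poles of $\Psi$ --- matches the count on the compact surface $X$. All of this is classical and is carried out in detail in \cite{Ford}; everything after the reduction is the standard compact-surface dictionary.
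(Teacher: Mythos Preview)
Your argument is correct and matches the paper's: for (i)--(iii) the paper simply cites \cite[Section~42]{Ford}, whose content is exactly the fundamental-region/compact-surface argument you outline, and for (iv) the paper gives precisely your degree-one argument --- a nonconstant $\Psi$ with a single pole in $F$ would yield a conformal bijection $\widehat{\mathbb{C}}/\Gamma\to\widehat{\mathbb{C}}$, which is impossible. The paper leaves the impossibility unjustified; your genus observation is one way to supply it, under the standing assumption (implicit in both the paper and your write-up) that $\Gamma$ is nontrivial.
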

\begin{proof}
For the proofs of (i), (ii) and (iii), see \cite[Section 42]{Ford}. For (iv), note that if $\Psi$ is a non-constant simple automorphic function with exactly 1 pole in $F$, then by (i) and (ii), it defines a conformal map of $\widehat{\mathbb{C}}/\Gamma$ onto $\widehat{\mathbb{C}}$. This is a contradiction because $\widehat{\mathbb{C}}/\Gamma$ cannot be conformally equivalent to $\widehat{\mathbb{C}}$.
\end{proof}

\section{Properties of the function $P(z,t)$}
Let $D\subsetneq\mathbb{C}$ be a simply-connected domain. We define a \emph{hull} in $D$ to be a closed set $K\subset \overline {D}$ such that $D\setminus K$ is also a simply-connected domain. Suppose that $\{K_{t}\}_{t\in[0,T]}$ is a family of hulls in a simply-connected domain $D\subsetneq\mathbb{C}$ and let  $D_{t}=D\setminus K_{t}$. We will call the family $\{K_{t}\}$ a \emph{continuously growing family of hulls} in $D$ if it satisfies the following properties:
\begin{enumerate}
\item $K_{0}=\emptyset$; $K_{T}\subsetneq D$;
\item $K_{s}\subsetneq K_{t}$ for $s< t$;
\item For $s\in[0,T]$, $D_{t}\rightarrow D_{s}$ as $t\rightarrow s$; this convergence is in the sense of Carath\'{e}odory kernel convergence with respect to any point in $D\setminus K_{T}$ (see \cite[p. 29]{MR0507768}).
\end{enumerate}
\begin{lemma}\label{lem0}
Suppose that $\{K_{t}\}$ is a continuously growing family of hulls in $D$. Then there exists a family of conformal maps $\{f_{t}\}$ such that each $f_{t}$ is a conformal map of $D$ onto $D\setminus K_{t}$ and moreover $f(z,t)=f_{t}(z)$ is differentiable with respect to $t$ almost everywhere in $(0,T)$.
\end{lemma}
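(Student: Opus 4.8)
The plan is to reduce to the unit disc $\mathbb{D}$ and show that, after the usual normalization, each $t\mapsto f_{t}(z)$ has bounded variation, so that differentiability almost everywhere follows at once from Lebesgue's theorem. First I would fix a reference point $z_{0}\in D\setminus K_{T}$ (so that $z_{0}\in D_{t}$ for every $t$) and a Riemann map $\psi\colon D\to\mathbb{D}$ with $\psi(z_{0})=0$. The domains $\psi(D_{t})\subseteq\mathbb{D}$ are then simply-connected, contain $0$, are nested, and converge to one another in the sense of Carath\'eodory (because $\psi$ is conformal); and if conformal maps $F_{t}\colon\mathbb{D}\to\psi(D_{t})$ can be chosen so that $t\mapsto F_{t}(w)$ is differentiable a.e., then $f_{t}:=\psi^{-1}\circ F_{t}\circ\psi$ has all the required properties. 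So I may assume $D=\mathbb{D}$ and $0\in D_{t}$ for all $t$ (the hull structure of $K_{t}$ then plays no further role -- only that the $D_{t}$ are nested simply-connected subdomains of $\mathbb{D}$). I would let $F_{t}\colon\mathbb{D}\to D_{t}$ be the unique conformal map with $F_{t}(0)=0$ and $F_{t}'(0)>0$, and set $a(t):=-\log F_{t}'(0)$. Since $D_{t}\subseteq D_{s}$ for $s\le t$, the map $g_{s,t}:=F_{s}^{-1}\circ F_{t}$ is a holomorphic self-map of $\mathbb{D}$ fixing $0$, so by the Schwarz lemma $0<F_{t}'(0)\le F_{s}'(0)$ and $a$ is non-decreasing on $[0,T]$. (Property (3) and the Carath\'eodory kernel theorem \cite{MR0507768} also make $t\mapsto F_{t}$ continuous for locally uniform convergence, but below only the monotonicity of $a$ is used.)

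The key estimate I would establish is
\[|F_{t}(z)-F_{s}(z)|\le\frac{r}{(1-r)^{2}}(a(t)-a(s))\qquad\text{for }0\le s\le t\le T\text{ and }|z|\le r<1.\]
To prove it, observe that $g_{s,t}(0)=0$ and $g_{s,t}'(0)=F_{t}'(0)/F_{s}'(0)=e^{-(a(t)-a(s))}$, so $z\mapsto g_{s,t}(z)/z$ is a holomorphic self-map of $\mathbb{D}$ equal to $e^{-(a(t)-a(s))}$ at the origin; by the Schwarz--Pick lemma the value $g_{s,t}(z)/z$ lies in the pseudo-hyperbolic disc of radius $|z|$ about $e^{-(a(t)-a(s))}$, and a short computation of the Euclidean centre and radius of that disc gives $|g_{s,t}(z)/z-1|\le\frac{1+r}{1-r}(1-e^{-(a(t)-a(s))})$ for $|z|\le r$, hence $|g_{s,t}(z)-z|\le\frac{r(1+r)}{1-r}(a(t)-a(s))$. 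Since $|g_{s,t}(z)|\le|z|\le r$, the segment from $z$ to $g_{s,t}(z)$ stays inside $\{|\zeta|\le r\}$, on which $|F_{s}'(\zeta)|\le(1-r^{2})^{-1}$ by Schwarz--Pick (as $F_{s}(\mathbb{D})=D_{s}\subseteq\mathbb{D}$); integrating $F_{s}'$ along that segment in $F_{t}(z)-F_{s}(z)=F_{s}(g_{s,t}(z))-F_{s}(z)$ gives the estimate.

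Now fix $z$ with $|z|\le r$. Telescoping over a partition of $[0,T]$ and using that $a$ is monotone, the total variation of $t\mapsto F_{t}(z)$ on $[0,T]$ is at most $\frac{r}{(1-r)^{2}}(a(T)-a(0))<\infty$; so $t\mapsto F_{t}(z)$ is of bounded variation, hence differentiable for almost every $t\in(0,T)$, and letting $r\uparrow1$ this holds for every $z\in\mathbb{D}$. To obtain a single null exceptional set that works for all $z$ simultaneously, I would take a countable dense set $\{z_{k}\}\subset\mathbb{D}$ and let $N\subset(0,T)$ be the null set of those $t$ at which some $t\mapsto F_{t}(z_{k})$, or $a$ itself, fails to be differentiable. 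For $t\notin N$ the key estimate together with the finiteness of $a'(t)$ shows that the difference quotients $(F_{t+h}-F_{t})/h$ are locally bounded in $z$ for all small $h$, while they converge at each $z_{k}$ as $h\to0$; by Vitali's theorem they then converge locally uniformly on $\mathbb{D}$, so $t\mapsto F_{t}(z)$ is differentiable at $t$ for every $z$ and the derivative $\dot{F}_{t}$ is holomorphic. Undoing the initial ($t$-independent) conformal change of variable then yields the family $\{f_{t}\}$ on $D$ with the asserted differentiability.

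The step I expect to be the main obstacle is the key estimate -- bounding $F_{t}-F_{s}$ by $a(t)-a(s)$ -- whose only genuine ingredient is the Schwarz--Pick control of $g_{s,t}(z)/z$. The one further point requiring care is the passage from ``for each fixed $z$, $t\mapsto F_{t}(z)$ is differentiable a.e.'' to ``for a.e.\ $t$, $t\mapsto F_{t}$ is differentiable throughout $\mathbb{D}$'', which is precisely what the Vitali-theorem argument above is designed to accomplish.
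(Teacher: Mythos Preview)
Your proof is correct and follows essentially the same strategy as the paper: reduce to the unit disc via a Riemann map, normalize at the origin, and obtain a Lipschitz bound $|F_t(z)-F_s(z)|\le C_r\,(a(t)-a(s))$ against the monotone log-conformal-radius, from which a.e.\ differentiability follows. The paper simply cites Pommerenke's Lemma~6.1 for this estimate (with the slightly cruder constant $C|z|/(1-|z|)^4$), whereas you supply a self-contained Schwarz--Pick argument and also take the extra step of upgrading pointwise a.e.\ differentiability to a single null set via Vitali's theorem---a point the paper leaves implicit.
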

 We will call the family of functions $\{f_{t}\}$ in Lemma \ref{lem0} a \emph{generalized Loewner chain} in $D$ corresponding to the continuously growing family of hulls $\{K_{t}\}$.
\begin{proof}[Proof of Lemma \ref{lem0}]
We first assume that $D=\mathbb{D}= \{z\in\mathbb{C}:|z|<1\}$ the unit disc and $0\not\in K_{t}$  for all $t\in[0,T]$. Let $f_{t}$ be the unique conformal map of $\mathbb{D}$ onto $D_{t}$ such that $f_{t}(0)=0$ and $f_{t}'(0)=r(t)>0$ (which exists by the Riemann mapping theorem).  Then note that properties (2) and (3) above imply that $r(t)$ is continuous and strictly increasing. Then the proof of Lemma 6.1 in \cite{MR0507768} shows that
\[|f_{t}(z)-f_{s}(z)|\leq \frac{C|z|}{(1-|z|)^{4}}(r(t)-r(s))\]
for all $z\in\mathbb{D}$ and for some constant $C>0$. In particular, this implies that $f_{t}(z)$ is absolutely continuous with respect to $t$ and so by Fatou's Theorem is almost everywhere differentiable with respect to $t$.

In the general case, by property (1), we can find $w\in D\setminus K_{T}$. By the Riemann mapping theorem, there exists a conformal map $\psi:D\rightarrow \mathbb{D}$ with $\psi(w)=0$. We let $\widetilde{K}_{t}=\psi(K_{t})$. Then $\widetilde{K}_{t}$ is a continuously growing family of hulls in $\mathbb{D}$ such that $0\not\in\widetilde{K}_{t}$ for all $t\in[0,T]$. Hence by the previous part, we can find conformal maps $\widetilde{f}_{t}$ of $\mathbb{D}$ onto $\mathbb{D}\setminus \widetilde{K}_{t}$ such that $\widetilde{f}(z,t)=\widetilde{f}_{t}(z)$ is differentiable for almost all $t$. We define
$f_{t}=\psi^{-1}\circ \widetilde{f}_{t}\circ \psi$. Then $f_{t}$ is a conformal map of $D$ onto $D\setminus K_{t}$ and moreover $f(z,t)=f_{t}(z)$ is differentiable almost everywhere with respect to $t$.
\end{proof}
\begin{remark}
Note that if $\phi_{t}$ is a family of conformal maps such that for fixed $t$, $\phi_{t}$ is a conformal automorphism of $D$ and also $\phi_{t}(z)$ is differentiable with respect to $t$ for almost every $t\in(0,T)$. Then $F_{t}=f_{t}\circ\phi_{t}$ is also a family of conformal mappings of $D$ onto $D_{t}$ such that $f_{t}(z)$ is differentiable for almost every $t$. Thus we conclude that there are many different normalizations of $f_{t}$ such that $f_{t}(z)$ is differentiable for almost every $t\in(0,T)$.
\end{remark}
We return to the situation we are interested in. Let $\Gamma$ be a finitely generated Fuchsian group such that $R=\mathbb{H}/\Gamma$ is a Riemann surface with boundary $\partial R$ and let $\pi:\mathbb{H}\rightarrow R$ be the quotient map. We define $K$ to be a \emph{hull} in $R$ if $K$ is a closed set in $R$ such that
$\pi^{-1}(K)$ is a hull in $\mathbb{H}$. If $\{K_{t}\}_{t\in[0,T]}$ is a family of hulls on $R$, we say that $\{K_{t}\}_{t\in[0,T]}$ is a \emph{continuously growing family of hulls} if $\{\pi^{-1}(K_{t})\}_{t\in[0,T]}$ is a continuously growing family of hulls in $\mathbb{H}$.

So suppose that $\{K_{t}\}_{t\in[0,T]}$ is a continuously growing family of  hulls in $R$. By Lemma \ref{lem0}, we can find a family of conformal maps $\{f_{t}\}_{t\in[0,T]}$ such that $f_{t}$ maps $\mathbb{H}$ onto $H_{t}=\mathbb{H}\setminus \pi^{-1}(K_{t})$ and  $f(z,t)=f_{t}(z)$ is differentiable for almost all $t\in(0,T)$. The family of functions $\{f_{t}\}_{t\in[0,T]}$ is called a \emph{Loewner chain} corresponding to the continuously growing family of hulls $\{K_{t}\}_{t\in[0,T]}$ on $R$. Then, for some set $E\subset(0,T)$ of Lebesgue measure $0$, we define the function $P:\mathbb{H}\times([0,T]\setminus E)\rightarrow \mathbb{C}$ to be
\[P(z,t)=\frac{\dot{f}_{t}(z)}{f_{t}'(z)}.\]
Note that each $H_{t}$ is $\Gamma$-invariant i.e.
\[\phi(H_{t})=H_{t} \text{ for any } \phi\in\Gamma.\]
Hence $f_{t}^{-1}\circ\phi\circ f_{t}$ is a conformal automorphism of $\mathbb{H}$ and hence is also a M\"{o}bius transformation preserving $\mathbb{H}$. Thus, if we define
\[\Gamma_{t}=f_{t}^{-1}\circ\Gamma\circ f_{t}=\{f_{t}^{-1}\circ\phi\circ f_{t}:\phi\in\Gamma\},\]
then $\Gamma_{t}$ is also a Fuchsian group. Also, note that $f_{t}^{-1}(\pi^{-1}(\partial K_{t}\cap R))\subset \partial\mathbb{H}$ is not contained in the limit set $\Lambda_{t}$ of $\Gamma_{t}$. Hence, for $t\in(0,T]$, we have $\Lambda_{t}\neq\partial \mathbb{H}$ i.e. $\Gamma_{t}$ is a Fuchsian group of the second kind.
\begin{lemma}\label{lem1}
The function $P(z,t)$ satisfies
\[P(\phi_{t}(z),t)=\phi_{t}'(z)P(z,t)-\dot{\phi}_{t}(z)\]
for any $\phi_{t}\in\Gamma_{t}$.
\end{lemma}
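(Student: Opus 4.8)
The plan is to exploit the defining relation of $\Gamma_{t}$, namely that for each $\phi_{t}\in\Gamma_{t}$ there is a \emph{fixed} (time-independent) $\phi\in\Gamma$ with $f_{t}\circ\phi_{t}=\phi\circ f_{t}$, and simply differentiate this identity in both variables. First I would differentiate $f_{t}\circ\phi_{t}=\phi\circ f_{t}$ with respect to $z$, using the chain rule, to obtain $f_{t}'(\phi_{t}(z))\,\phi_{t}'(z)=\phi'(f_{t}(z))\,f_{t}'(z)$, which rearranges to
\[\frac{\phi'(f_{t}(z))}{f_{t}'(\phi_{t}(z))}=\frac{\phi_{t}'(z)}{f_{t}'(z)}.\]
This is the algebraic bridge that will let me eliminate $\phi'(f_{t}(z))$ at the end.

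Next I would differentiate the same identity $f_{t}\circ\phi_{t}=\phi\circ f_{t}$ with respect to $t$. On the right-hand side, since $\phi$ does not depend on $t$, we get $\phi'(f_{t}(z))\,\dot{f}_{t}(z)$; on the left-hand side, applying the chain rule to a function of $t$ composed with a function of $t$ gives $\dot{f}_{t}(\phi_{t}(z))+f_{t}'(\phi_{t}(z))\,\dot{\phi}_{t}(z)$. Equating and solving for $\dot{f}_{t}(\phi_{t}(z))$ yields
\[\dot{f}_{t}(\phi_{t}(z))=\phi'(f_{t}(z))\,\dot{f}_{t}(z)-f_{t}'(\phi_{t}(z))\,\dot{\phi}_{t}(z).\]

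Finally I would divide this last equation by $f_{t}'(\phi_{t}(z))$ to produce $P(\phi_{t}(z),t)$ on the left, substitute the bridge identity from the first step into the first term on the right, and read off
\[P(\phi_{t}(z),t)=\frac{\phi_{t}'(z)}{f_{t}'(z)}\,\dot{f}_{t}(z)-\dot{\phi}_{t}(z)=\phi_{t}'(z)P(z,t)-\dot{\phi}_{t}(z),\]
which is exactly the claimed transformation rule.

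There is no serious obstacle here; the only point requiring a little care is the regularity bookkeeping. The differentiation in $t$ is legitimate only for $t$ outside the null set $E$ on which $f(z,t)$ fails to be differentiable, together with the null set on which the chosen generators (and hence all elements of $\Gamma_{t}$) fail to be differentiable; on the complement, which still has full measure, the computation above is valid, and this suffices since $P(z,t)$ is only defined there anyway. I would also remark that it is essential that $\phi\in\Gamma$ be time-independent, so that $\frac{\partial}{\partial t}(\phi\circ f_{t})$ has no extra term — this is the reason the formula is stated in terms of $\Gamma_{t}$ conjugated from the fixed group $\Gamma$ rather than an arbitrary time-dependent family of Möbius maps.
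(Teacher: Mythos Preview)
Your proof is correct and is essentially the same chain-rule computation as the paper's, just organized slightly differently: the paper introduces $g_{t}=f_{t}^{-1}$, notes $\dot{g}_{t}(z)=-P(g_{t}(z),t)$, and differentiates $\phi_{t}=g_{t}\circ\phi\circ f_{t}$ in $t$, whereas you keep everything in terms of $f_{t}$ by differentiating $f_{t}\circ\phi_{t}=\phi\circ f_{t}$ and using the $z$-derivative identity to eliminate $\phi'(f_{t}(z))$. The two computations are equivalent and the regularity remarks you add are appropriate.
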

\begin{proof}
Note that for any $\phi_{t}\in\Gamma_{t}$, we can write $\phi_{t}=f_{t}^{-1}\circ\phi\circ f_{t}$ for some $\phi\in\Gamma$. Also note that $f_{t}$ satisfies
\[\dot{f}_{t}(z)=f_{t}'(z)P(z,t),\]
and if $g_{t}=f_{t}^{-1}$, then
\[\dot{g}_{t}(z)=-P(g_{t}(z),t).\]
Hence, the result follows from differentiating $\phi_{t}=g_{t}\circ\phi\circ f_{t}$ with respect to $t$ using the chain rule.
\end{proof}
Now let $\gamma:[0,T]\rightarrow R\cup \partial R$ be a simple curve such that $\gamma(0,T]\subset R$ and $\gamma(0)\in\partial R$. In the above notation, we let $K_{t}=\gamma(0,t]$. Then $\{K_{t}\}$ is a continuously growing family of hulls in $R$ and we define the Loewner chain $\{f_{t}\}$ and $P(z,t)$ as above. Note that we can write
\[\pi^{-1}(\gamma(0,t])=\bigcup_{\phi\in\Gamma}\phi( \widetilde{\gamma}(0,t]),\]
where $\widetilde{\gamma}:[0,T]\rightarrow \overline{\mathbb{H}}$ is a simple curve with $\widetilde{\gamma}(0)\in\mathbb{R}$ and $\widetilde{\gamma}(0,T]\subset\mathbb{H}$. Let $\xi(t)=f_{t}^{-1}(\widetilde{\gamma}(t))$.
\begin{lemma}\label{lem2}
Let $K_{t}=\gamma(0,t]$ be as above. Then $P(z,t)$ is a meromorphic function in $\widehat{\mathbb{C}}\setminus\Lambda_{t}$ and moreover, the only poles of $P(z,t)$ are simple poles of negative residue at $\phi_{t}(\widetilde{\xi}(t))$ for each $\phi_{t}\in\Gamma_{t}$. Also, the function $\xi(t)$ is continuous and the limit of $P(z,t)$ as $z$ tends to the fixed point of a parabolic element of $\Gamma$ inside a fundamental region exists.
\end{lemma}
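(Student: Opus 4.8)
We outline a proof. The guiding idea is that, although $f_{t}$ is generally no smoother at $\partial\mathbb{H}$ than the curve $\gamma$ (which is merely continuous), the quantity $P(z,t)=\dot f_{t}(z)/f_{t}'(z)$ is an \emph{infinitesimal} object and is far more regular; the plan is to realise it as (minus) the Loewner vector field of an auxiliary family of conformal maps. Fix $t\in(0,T)$ where $f(\cdot,t)$ is differentiable (cf.\ Lemma~\ref{lem0}), and for small $h>0$ set $\psi_{t,h}=f_{t+h}^{-1}\circ f_{t}$. Since $K_{t}\subset K_{t+h}$ and $\pi^{-1}(\gamma(t,t+h])=\bigcup_{\phi\in\Gamma}\phi(\widetilde\gamma(t,t+h])$, the map $\psi_{t,h}$ is a conformal bijection of $\mathbb{H}\setminus\bigcup_{\phi_{t}\in\Gamma_{t}}\phi_{t}(\ell_{t,h})$ onto $\mathbb{H}$, where $\ell_{t,h}=f_{t}^{-1}(\widetilde\gamma(t,t+h])$ is a slit issuing from $\xi(t)\in\mathbb{R}$ into $\mathbb{H}$ and its $\Gamma_{t}$-images $\phi_{t}(\ell_{t,h})$ issue from the points $\phi_{t}(\xi(t))$. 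Differentiating $f_{t+h}\circ\psi_{t,h}=f_{t}$ at $h=0$ and using $\psi_{t,0}=\mathrm{id}$ gives
\[V_{t}(z):=\left.\frac{\partial}{\partial h}\right|_{h=0}\psi_{t,h}(z)=-\frac{\dot f_{t}(z)}{f_{t}'(z)}=-P(z,t),\]
so it suffices to establish the stated properties for $V_{t}$.

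The decisive observation is that, unlike $f_{t}$ itself, the auxiliary map $\psi_{t,h}$ extends analytically across $\mathbb{R}\setminus(\Lambda_{t}\cup\{\phi_{t}(\xi(t)):\phi_{t}\in\Gamma_{t}\})$, with real boundary values there. Indeed, at such a boundary point $x$ the domain of $\psi_{t,h}$ locally coincides with $\mathbb{H}$, and $f_{t}$ carries a real interval about $x$ either into $\mathbb{R}\cap\partial H_{t}$ or onto one side of an ``old'' slit $\phi(\widetilde\gamma(0,t])$; in either case $f_{t+h}^{-1}$ then maps it into $\mathbb{R}$ (the old slit being part of the hull removed by $f_{t+h}^{-1}$) --- the factor $f_{t+h}^{-1}$ ``unfolds'' the slit onto the line. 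Letting $h\to0$, with the Loewner-type estimates of the proof of Lemma~\ref{lem0} furnishing local uniform convergence of $h^{-1}(\psi_{t,h}-\mathrm{id})$ up to these boundary arcs, we conclude that $V_{t}$ is holomorphic on $\mathbb{H}$, extends analytically across $\mathbb{R}\setminus(\Lambda_{t}\cup\{\phi_{t}(\xi(t))\})$ and is real there, and hence, by Schwarz reflection $V_{t}(\bar z)=\overline{V_{t}(z)}$, extends to a function holomorphic on $\widehat{\mathbb{C}}\setminus(\Lambda_{t}\cup\{\phi_{t}(\xi(t)):\phi_{t}\in\Gamma_{t}\})$.

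It remains to see that the points $\phi_{t}(\xi(t))$ are at worst simple poles with negative residue. Near $\xi(t)$ only the single slit $\ell_{t,h}$ lies on the boundary of the domain of $\psi_{t,h}$, so there $\psi_{t,h}$ is, up to a normalisation correction, a chordal slit map; the chordal Loewner/half-plane-capacity expansion then gives $V_{t}(z)=\kappa(t)/(z-\xi(t))+(\text{holomorphic near }\xi(t))$ with $\kappa(t)>0$ (the capacity of $f_{t}^{-1}(\widetilde\gamma(t,t+h])$ being strictly increasing in $h$, while the normalisation correction is holomorphic and so does not affect the residue). Thus $P$ has a simple pole at $\xi(t)$ of negative residue $-\kappa(t)$. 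Applying the cocycle relation of Lemma~\ref{lem1}, $P(\phi_{t}(z),t)=\phi_{t}'(z)P(z,t)-\dot\phi_{t}(z)$, together with $\phi_{t}'(\xi(t))\in\mathbb{R}$ (real coefficients), turns this into a simple pole of residue $-\kappa(t)\,\phi_{t}'(\xi(t))^{2}<0$ at each $\phi_{t}(\xi(t))$, and these are the only singularities. Hence $P(\cdot,t)$ is meromorphic on $\widehat{\mathbb{C}}\setminus\Lambda_{t}$ with exactly the asserted poles.

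Continuity of $\xi(t)=f_{t}^{-1}(\widetilde\gamma(t))$ follows by the standard argument of Loewner theory from the continuous growth of $\{\pi^{-1}(K_{t})\}$, the Carath\'{e}odory convergence $H_{t}\to H_{s}$, and continuity of $\widetilde\gamma$ (the local hull $f_{t}^{-1}(\widetilde\gamma(t,u])$ shrinking to $\xi(t)$ as $u\downarrow t$, locally uniformly in $t$, and symmetrically for $u\uparrow t$). For the final assertion, write $-P(z,t)=V_{t}(z)=\dot g_{t}(f_{t}(z))$ with $g_{t}=f_{t}^{-1}$; if $\rho_{t}=f_{t}^{-1}\circ\rho\circ f_{t}\in\Gamma_{t}$ is parabolic with fixed point $p$ (so $\rho\in\Gamma$ is parabolic with fixed point $q=f_{t}(p)$), conjugating $\rho$ and $\rho_{t}$ simultaneously to $w\mapsto w+1$ shows that $g_{t}$, in those coordinates near the cusp, commutes with $w\mapsto w+1$ and hence has a Fourier expansion whose non-constant part decays as $\mathrm{Im}[w]\to\infty$; differentiating in $t$ shows $\dot g_{t}(w)$ has a (possibly infinite) limit as $w\to q$ inside a fundamental region, whence $P(z,t)$ has a limit as $z\to p$ inside a fundamental region of $\Gamma_{t}$ --- the analogue for $P$ of condition~(iii) in the definition of a simple automorphic function, which is what \S4 will require. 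The main difficulty is the middle step: one must genuinely exploit that $\psi_{t,h}=f_{t+h}^{-1}\circ f_{t}$ is analytic across almost all of $\mathbb{R}\setminus\Lambda_{t}$ although $f_{t}$ is not, transfer this regularity to the limiting vector field $V_{t}$, and identify its chordal behaviour at each tip; the uniform control of $h^{-1}(\psi_{t,h}-\mathrm{id})$ near the boundary and the bookkeeping of which arcs of $\mathbb{R}$ are free boundary for $\psi_{t,h}$ are where care is required.
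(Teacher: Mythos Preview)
Your argument is correct and follows the paper's strategy closely: analyticity of $P(\cdot,t)$ away from the $\Gamma_{t}$-orbit of $\xi(t)$ is obtained via Schwarz reflection applied to a composition $f_{s}^{-1}\circ f_{t}$ (the paper takes $s<t$ and lets $s\uparrow t$, you take $s=t+h$ and differentiate at $h=0$; these are equivalent); the pole structure at $\xi(t)$ comes from a local chordal comparison; and Lemma~\ref{lem1} propagates it to all $\phi_{t}(\xi(t))$.

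Two points of comparison are worth noting. First, for the residue at $\xi(t)$ the paper is more explicit: it introduces the hydrodynamically normalised single-slit map $\widetilde g_{t}$ for $\widetilde\gamma(0,t]$, uses the \emph{known} chordal Loewner equation $\dot{\widetilde g}_{t}=\dot c(t)/(\widetilde g_{t}-\widetilde\xi(t))$, and writes $\beta_{t}=\widetilde g_{t}\circ f_{t}$, which is conformal in a full neighbourhood of $\xi(t)$; expanding $\dot\beta_{t}=\dot c(t)/(\beta_{t}-\widetilde\xi(t))+\beta_{t}'\,P$ in Taylor series at $\xi(t)$ forces the residue of $P$ to be $-\dot c(t)/\beta_{t}'(\xi(t))<0$. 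This also gives continuity of $\xi$ for free, via $\beta_{t}(\xi(t))=\widetilde\xi(t)$ and the known continuity of $\widetilde\xi$. Your local half-plane-capacity argument reaches the same conclusion but leaves the bookkeeping (that the normalisation correction is holomorphic, and the precise sign) a bit implicit; the paper's $\beta_{t}$ device is a clean way to make this rigorous.

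Second, for the parabolic limit the paper takes a different and shorter route: it uses the Julia--Wolff Lemma for the self-map $f_{t}:\mathbb{H}\to\mathbb{H}$ to obtain the angular limit of $f_{t}'$ at the parabolic fixed point, and deduces the angular limit of $P$ directly. Your Fourier/cusp-coordinate argument is valid in spirit, but note that the conjugation $\sigma_{t}$ sending $\rho_{t}$ to $w\mapsto w+1$ depends on $t$, so passing from the Fourier expansion of $\hat g_{t}=\sigma_{t}\circ g_{t}\circ\sigma^{-1}$ to the behaviour of $\dot g_{t}$ requires keeping track of the $\dot\sigma_{t}$ contribution; this is routine but should be said. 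The Julia--Wolff approach avoids that bookkeeping entirely.
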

\begin{proof}
Suppose that $t\in(0,T)$ such that $P(z,t)$ is defined and take $s<t$ sufficiently close to $t$. Let $F_{t}=f_{s}^{-1}\circ f_{t}$ and $I_{t}^{s}$ be the preimage of $f_{s}^{-1}(\pi^{-1}(\gamma(s,t)))$ under $F_{t}$. Then we can write
\[
I_{t}^{s}=\bigcup_{\phi_{t}\in\Gamma_{t}}\phi_{t}(J_{t}^{s}),\]
where $J_{t}^{s}$ is an interval on the real line containing $\xi(t)$. By the Schwarz reflection principle, $F_{t}$ can be
extended to an analytic map on $\overline{\mathbb{H}}\setminus I_{t}^{s}$. Note that by the chain rule,
\[\dot{F}_{t}(z)=F_{t}'(z)P(z,t).\]
Thus $P(z,t)$ is also analytic in $\overline{\mathbb{H}}\setminus I_{t}^{s}$. Then as $s\rightarrow t$, we have $\bigcap_{s<t}J_{t}^{s}=\{\xi(t)\}$. Hence $P(z,t)$ is holomorphic in $\overline{\mathbb{H}}\setminus\{\phi_{t}(\xi(t)):\phi_{t}\in\Gamma_{t}\}$.

For each $t\in[0,T]$, by the Riemann mapping theorem, there exists a unique conformal map $\widetilde{g}_{t}:\mathbb{H}\setminus \widetilde{\gamma}(0,t] \rightarrow \mathbb{H}$ such that $\widetilde{g}_{t}$ is normalized hydrodynamically i.e. $\widetilde{g}_{t}(\infty)=\infty$ and
\[ \widetilde{g}_{t}(z)=z+\frac{c(t)}{z}+O(z^{-2}) \text{ as } z\rightarrow\infty.\]
Here, $c(t)$ is the half-plane capacity of $\widetilde{\gamma}(0,t]$ and is strictly increasing and hence is differentiable for almost every $t\in(0,T)$. Hence for almost all $t\in(0,T)$, $\widetilde{g}_{t}$ satisfies the chordal Loewner differential equation
\[\dot{\widetilde{g}}_{t}(z)=\frac{\dot{c}(t)}{\widetilde{g}_{t}(z)-\widetilde{\xi}(t)},\]
where $\widetilde{\xi}(t)=\widetilde{g}_{t}(\widetilde{\gamma}(t))$ (see \cite[Section 4.1]{MR2129588}). Note that $\widetilde{\xi}(t)$ is continuous (see \cite[Lemma 4.2]{MR2129588}).

Let $\beta_{t}=\widetilde{g}_{t}\circ f_{t}$. Using the Schwarz reflection principle, we can extend $\beta_{t}$ to a conformal mapping of a neighbourhood of $\xi(t)$ to a neighbourhood of $\widetilde{\xi}(t)$ with
\[\beta_{t}(\xi(t))=\widetilde{\xi}(t).\]
So in particular, $\beta_{t}$ is conformal at $\xi(t)$ and thus has Taylor expansion
\[\beta_{t}(z)=\widetilde{\xi}(t) + a_{1,t}(z-\xi(t)) + a_{2,t}(z-\xi(t))^2+\cdots\]
near $z=\xi(t)$. This also implies that $\xi(t)$ is continuous.

 By the chain rule, for almost all $t\in[0,T]$,
\begin{eqnarray*}\dot{\beta}_{t}(z)&=&\dot{\widetilde{g}}_{t}(f_{t}(z)) + \widetilde{g}_{t}'(f_{t}(z))\dot{f}_{t}(z)
\\&=& \frac{\dot{c}(t)}{\beta_{t}(z)-\widetilde{\xi}(t)}+ \beta_{t}'(z)P(z,t).
\end{eqnarray*}
Substituting the Taylor expansion  of $\beta_{t}(z)$ about $z=\xi(t)$ and using the fact that $\dot{\beta}_{t}(\xi(t))<\infty$, we deduce that $P(z,t)$ must have a simple pole at $z=\xi(t)$ of residue
\[\frac{-\dot{c}(t)}{\beta_{t}'(\xi(t))}.\]
Note that the fact that $c(t)$ is strictly increasing implies that $\dot{c}(t)>0$ and, also, the fact that $\beta_{t}$ maps an interval containing $\xi(t)$ to an interval containing $ \widetilde{\xi}(t)$ preserving orientation implies that $\beta_{t}'(\xi(t))>0$. Hence $P(z,t)$ has a simple pole at $z=\xi(t)$ of negative residue. By Lemma \ref{lem1}, this also implies that $P(z,t)$ has a simple pole of negative residue at $\phi_{t}(\xi(t))$ for each $\phi_{t}\in\Gamma_{t}$. In particular, the singularities of $P(z,t)$ at $\phi_{t}(\xi(t))$ are not branch point singularities. This implies that $P(z,t)$ can be extended to a meromorphic function in $\widehat{\mathbb{C}}\setminus\Lambda_{t}$

Finally, suppose that $p_{t}$ is the fixed point of a parabolic M\"{o}bius transformation in $\Gamma_{t}$. Note that the angular limit (i.e. the limit as $z\rightarrow p_{t}$ in a Stolz region at $p_{t}$ -- see \cite[p. 6]{MR1217706}) exists. Moreover, since $f_{t}$ maps $\mathbb{H}$ into itself, by the Julia-Wolff Lemma (see \cite[Proposition 4.13]{MR1217706}) the angular limit of $f_{t}'(z)$ as $z\rightarrow p_{t}$ also exists. This implies that the angular limit as $z\rightarrow p_{t}$ of $P(z,t)$ exists and in particular, the limit of $P(z,t)$ as $z\rightarrow p_{t}$ inside a fundamental region also exists.
\end{proof}
We will now construct a particular character-automorphic function (see Section 2) that will be important to us. This function also appears in \cite[Section 99]{Ford} and in \cite{Tsai2}.
\begin{lemma}\label{lem3}
Suppose that $\Gamma'$ is a Fuchsian group of the second kind with limit set $\Lambda'$ such that $\infty\not\in\Lambda'$. Let
\[\Psi(z)=\prod_{\phi\in\Gamma'}\left(\frac{z-\phi\circ \psi(c)}{z-\phi(c)}\right)^{2},\]
for some  $\psi \in\Gamma'$ and $c\in\mathbb{R}\setminus\Lambda'$. Then
\begin{enumerate}
 \item[(i)] $\Psi$ is well-defined and is a holomorphic function in $\widehat{\mathbb{C}}\setminus\Lambda'$ with no zeroes;
\item[(ii)]  $\Psi$ is character-automorphic with character
\[\chi(\rho)= \prod_{\phi\in\Gamma'}\frac{\rho'(\phi\circ\psi(c))}{\rho'(\phi(c))}\]
for $\rho\in\Gamma'$.
\item[(iii)]  if $\rho\in\Gamma'$ is a parabolic or elliptic M\"{o}bius transformation, then
\[\chi(\rho)=1;\]
\item[(iv)]  the limit of $\Psi(z)$ as $z$ tends to the fixed point of a parabolic element of $\Gamma'$ inside a fundamental region exists;
\item[(v)]   the function $\Psi$ does not depend on the choice of $c\in\mathbb{R}\setminus\Lambda'$ i.e. for $c,c'\in\mathbb{R}\setminus\Lambda'$,
\[\Psi_{t}(z)=\prod_{\phi\in\Gamma'}\left(\frac{z-\phi\circ \psi(c)}{z-\phi(c)}\right)^{2}=\prod_{\phi\in\Gamma'}\left(\frac{z-\phi\circ \psi(c')}{z-\phi(c')}\right)^{2}.\]
\end{enumerate}
\end{lemma}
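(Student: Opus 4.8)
The plan is to base everything on a single estimate drawn from Lemma~\ref{lempoin}: for any $x,y\in\widehat{\mathbb{C}}\setminus\Lambda'$ one has $\sum_{\phi\in\Gamma'}|\phi(x)-\phi(y)|<\infty$. Indeed, normalizing $\phi\in\Gamma'\subset SL_{2}(\mathbb{R})$ gives $\phi(x)-\phi(y)=(x-y)\,\phi'(x)^{1/2}\phi'(y)^{1/2}$, so $|\phi(x)-\phi(y)|\le\tfrac12|x-y|\,(|\phi'(x)|+|\phi'(y)|)$ and Lemma~\ref{lempoin} makes the right-hand side summable. For (i) I would write each factor as $\bigl(1+\tfrac{\phi(c)-\phi\psi(c)}{z-\phi(c)}\bigr)^{2}$; on a compact $K\subset\widehat{\mathbb{C}}\setminus\Lambda'$ only finitely many $\phi(c)$ fail to stay away from $K$, so the estimate gives absolute, hence locally uniform, convergence away from $\Lambda'$ and the orbit $\{\phi(c)\}$, so $\Psi$ is there holomorphic and zero-free. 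Since $c\in\mathbb{R}\setminus\Lambda'$ has trivial $\Gamma'$-stabilizer the orbit map $\phi\mapsto\phi(c)$ is injective, so near a point $\phi_{0}(c)$ the factor indexed by $\phi_{0}$ is the only one with a pole there and that indexed by $\phi_{0}\psi^{-1}$ the only one with a zero; pulling out both, their product $\bigl(\tfrac{z-\phi_{0}\psi(c)}{z-\phi_{0}\psi^{-1}(c)}\bigr)^{2}$ is holomorphic and nonzero at $\phi_{0}(c)$ (unless $\psi=\mathrm{id}$, when $\Psi\equiv1$), while the remaining product stays holomorphic and zero-free; this extends (i) across the whole orbit. (One may assume $c$ is not in the $\Gamma'$-orbit of $\infty$, or else reinterpret the finitely many exceptional factors on the sphere.)

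For (ii) I would compute directly: with $\rho(z)=\tfrac{\alpha z+\beta}{\gamma z+\delta}$ and $\alpha\delta-\beta\gamma=1$, the identities $\rho(z)-\phi(c)=\tfrac{z-\rho^{-1}\phi(c)}{(\gamma z+\delta)(\gamma\rho^{-1}\phi(c)+\delta)}$ and $(\gamma x+\delta)^{2}=\rho'(x)^{-1}$ turn $\Psi(\rho(z))$, after the reindexing $\phi\mapsto\rho\phi$ (legitimate by absolute convergence), into $\Psi(z)$ times the $z$-independent factor $\prod_{\phi}\rho'(\phi\psi(c))/\rho'(\phi(c))$, which converges because $\rho'$ is Lipschitz and bounded away from $0$ and $\infty$ near $\Lambda'$ (since $\rho^{-1}(\infty)\notin\Lambda'$). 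Part (v) is cleanest at the level of products: for two admissible base points $c,c'$, $\Psi_{c}(z)\,\Psi_{c'}(z)^{-1}$ is one absolutely convergent product $\prod_{\phi}\bigl(\tfrac{(z-\phi\psi(c))(z-\phi(c'))}{(z-\phi(c))(z-\phi\psi(c'))}\bigr)^{2}$, which factors as $\bigl[\prod_{\phi}\bigl(\tfrac{z-\phi\psi(c)}{z-\phi\psi(c')}\bigr)^{2}\bigr]\bigl[\prod_{\phi}\bigl(\tfrac{z-\phi(c')}{z-\phi(c)}\bigr)^{2}\bigr]$, and reindexing $\phi\mapsto\phi\psi$ in the first bracket turns it into the reciprocal of the second, so the whole product is $1$.

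The substance is in (iii) and (iv). First a reduction: if $\psi$ is the identity, parabolic, elliptic, or a proper power $\psi_{1}^{k}$, then collapsing the factors over the left cosets of $\langle\psi\rangle$ (resp.\ of $\langle\psi_{1}\rangle$) telescopes $\Psi$ to $\equiv1$ (resp.\ to the $k$-th power of the corresponding function for $\psi_{1}$), so one may assume $\psi$ primitive hyperbolic with repelling/attracting fixed points $a,b\in\Lambda'$; the same telescoping then yields the Blaschke-type form $\Psi(z)=\prod_{[\phi]\in\Gamma'/\langle\psi\rangle}\bigl(\tfrac{z-\phi(b)}{z-\phi(a)}\bigr)^{2}$ and, running the computation of (ii) through it, $\chi(\rho)=\prod_{[\phi]\in\Gamma'/\langle\psi\rangle}\rho'(\phi(b))/\rho'(\phi(a))$. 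For (iii), write $\rho=\rho_{0}^{j}$ with $\langle\rho_{0}\rangle$ the cyclic stabilizer in $\Gamma'$ of the fixed point $q$ of $\rho$; since $\chi(\rho)=\chi(\rho_{0})^{j}$ it suffices to handle $\rho_{0}$, and I would regroup the cosets $\Gamma'/\langle\psi\rangle$ into the orbits of the free left $\langle\rho_{0}\rangle$-action. If $\rho_{0}$ is elliptic each orbit is finite of size $N=\mathrm{ord}(\rho_{0})$ and $\prod_{n=0}^{N-1}\rho_{0}'(\rho_{0}^{n}(y))=(\rho_{0}^{N})'(y)=1$, so every orbit contributes $1$; if $\rho_{0}$ is parabolic, using $\rho_{0}'(w)=\bigl(\tfrac{\rho_{0}(w)-q}{w-q}\bigr)^{2}$ and $\rho_{0}^{n}(y)-q=\tfrac{y-q}{1+n\kappa(y-q)}$, each orbit's partial products can be written as squares of ratios of the shape $\tfrac{(1+(M+1)\alpha)(1-N\beta)}{(1-N\alpha)(1+(M+1)\beta)}$ with $\alpha,\beta\neq0$ constant, which tend to $1$. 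Hence $\chi$ is trivial on all parabolic and elliptic elements.

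For (iv), still with $\psi$ primitive hyperbolic, I would again regroup $\Gamma'/\langle\psi\rangle$ by the free left action of the primitive parabolic $\rho_{0}$ fixing the cusp $p$ and pass to a coordinate $w$ in which $p\mapsto\infty$ and $\rho_{0}\mapsto(w\mapsto w+1)$. Each orbit's contribution then becomes, via the classical identity $\prod_{n\in\mathbb{Z}}\tfrac{n-r}{n-s}=\tfrac{\sin\pi r}{\sin\pi s}$, a ratio of sines whose $z$-dependence lies only in arguments $\theta-\mathrm{const}$ with $\theta\sim\mathrm{const}\cdot w$; such a ratio has a constant limit as $w\to\infty$ with $\mathrm{Re}\,w$ bounded, which is precisely the mode of approach "inside a fundamental region" at a cusp, so multiplying over orbits (controlled once more by the summability estimate) gives the limit. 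I expect the main obstacle, acute in (iii) and (iv), to be exactly the bookkeeping that legitimizes every rearrangement, regrouping, and interchange of a limit with an infinite product; that is the technical core carried out in \cite[Section~99]{Ford}, which I would follow for those verifications.
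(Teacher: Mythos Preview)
Your proposal is correct, but in parts (iii), (iv) and (v) it takes a genuinely different path from the paper. For (iii) the paper never passes to the Blaschke form over $\Gamma'/\langle\psi\rangle$; it decomposes $\Gamma'$ directly into left $\langle\rho\rangle$--cosets and telescopes each coset's contribution via the chain rule, $\prod_{k=0}^{M-1}\rho'(\rho^{k}(y))=(\rho^{M})'(y)=1$ in the elliptic case and the analogous two-sided limit in the parabolic case. Your route through the fixed points $a,b$ of $\psi$ and then through $\langle\rho_{0}\rangle$--orbits on $\Gamma'/\langle\psi\rangle$ is a double-coset argument that reaches the same end but with an extra layer; it also requires you to justify convergence of products indexed by limit-set points $\phi(a),\phi(b)$, which the paper avoids entirely. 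For (iv) the paper is again much shorter: it conjugates the cusp to $\infty$ by some $\zeta\in\mathcal{M}$, uses the identity from (ii) to rewrite $\Psi(\zeta(z))$ as $\prod_{\phi}\frac{\zeta'(\zeta^{-1}\phi\psi(c))}{\zeta'(\zeta^{-1}\phi(c))}\bigl(\frac{z-\zeta^{-1}\phi\psi(c)}{z-\zeta^{-1}\phi(c)}\bigr)^{2}$, and lets $\mathrm{Im}\,z\to\infty$ so that each parenthetical factor tends to $1$; no sine products or orbit regrouping needed. Conversely, your treatment of (v) is more elementary than the paper's: the paper argues that the character in (ii) depends only on the accumulation set of $\{\phi(c)\}$, namely $\Lambda'$, so $\Psi_{c}/\Psi_{c'}$ is a simple automorphic function with no zeroes or poles, hence constant by Lemma~\ref{lemauto}(iii), and the constant is $1$ by the behaviour at $\infty$; your reindexing trick sidesteps the automorphic-function machinery altogether. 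In short, the paper's (iii)--(iv) are markedly shorter and keep everything at ordinary points $\phi(c)\notin\Lambda'$, while your (v) is the cleaner of the two.
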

\begin{proof}
(i): The fact that the infinite product in $\Psi $ converges follows from Lemma \ref{lempoin} (see \cite[Section 50]{Ford}).
Moreover,  for every term $(z-\phi(c))$ in the denominator the same term also appears in the numerator. Thus $\Psi $ has no poles or zeroes except at the accumulation points of $\phi(c)$ on $\mathbb{R}$ but these points are contained in the limit set $\Lambda'$ (see Section 2). Hence $\Psi$ has no zeroes or poles in $\widehat{\mathbb{C}}\setminus\Lambda'$.

\ \\
(ii): Note that for any $a\in\mathbb{C}$ and M\"{o}bius transformation $\phi$, we have
\begin{equation}(z-\phi(a))^2=\frac{\phi'(a)(\phi^{-1}(z)-a)^{2}}{(\phi^{-1})'(z)}.\label{leqleq}\end{equation}
Hence we can write
\[\Psi (z)=\prod_{\phi \in\Gamma'}\left(\frac{\phi '(\psi (c))}{\phi '(c)}\right) \left(\frac{\phi ^{-1}(z)-\psi (c)}{\phi ^{-1}(z)-c}\right)^{2 }.\]
This implies that for any $\rho \in\Gamma $, we have
\begin{eqnarray*}\Psi (\rho (z))&=&
\prod_{\phi \in\Gamma' }\left(\frac{\phi '(\psi (c))}{\phi '(c)}\right) \left(\frac{\phi ^{-1}\circ\rho (z)-\psi (c)}{\phi ^{-1}\circ\rho (z)-c}\right)^{2 }
\\&=&\left[ \prod_{\phi \in\Gamma' }\left(\frac{\phi '(\psi (c))(\rho ^{-1}\circ\phi )'(c)}{\phi '(c)(\rho ^{-1}\circ\phi )'(\psi (c))}\right) \right]\Psi (z)
\\ &=& \left[\prod_{\phi \in\Gamma' }\left(\frac{\rho '(\phi \circ \psi (c)))}{\rho '(\phi (c))}\right) \right]\Psi (z).
\end{eqnarray*}
Hence
\[\chi (\rho )=\prod_{\phi \in\Gamma' }\frac{\rho '(\phi \circ \psi (c)))}{\rho '(\phi (c))}.\]

\ \\
(iii): Now let $\Gamma_{\rho}$ be the subset of $\Gamma'$ consisting of all M\"{o}bius transformations of the form $\phi^{*}\circ\phi$ where $\phi^{*},\phi\in\Gamma'$ and $\phi^{*}\neq \rho$ such that every element in the left quotient space of $\Gamma'$ under the stabilizer of $\rho$ corresponds to a unique element in $\Gamma_{\rho}$. If $\rho\in\Gamma'$ is an elliptic transformation, then $\rho$ must be of finite order $M$ since $\Gamma'$ is discrete (see \cite[Theorem 5.3.11]{MR1393195}). Thus, $\Gamma$ is the disjoint union of $\rho^{k}(\Gamma_{\rho})$ for $k=0,\ldots,M-1$ and hence  we can write
\[\chi (\rho )=\prod_{\phi \in\Gamma_{\rho }}\prod_{k=0}^{M-1}\frac{\rho '(\rho^{k}\circ\phi \circ \psi (c))}{\rho '(\rho^{k}\circ \phi(c))}.\]
Then note that
\begin{eqnarray*}\prod_{k=0}^{M-1}\rho '(\rho^{k}\circ\phi \circ \psi (c))&=& \prod_{k=1}^{M}\frac{(\rho^{k+1}\circ\phi)'(\psi(c))}{(\rho^{k}\circ\phi)'(\psi(c))  }
\\ &=& \frac{(\rho^{M}\circ\phi)'(\psi(c))}{\phi'(\psi(c))  }=1 ,\end{eqnarray*}
where for the last equality, we have used the fact that $\rho$ is of order $M$. Similarly,
\[\prod_{k=1}^{M}\rho '(\rho^{k}\circ\phi(c))=1.\]
Thus
\[\chi (\rho )=1.\]
If $\rho$ is a parabolic M\"{o}bius transformation, then $\Gamma$ is the disjoint union of $\rho^{k}(\Gamma_{\rho})$ for $k\in\mathbb{Z}$ and hence
\[\chi (\rho )=\prod_{\phi \in\Gamma_{\rho }}\prod_{k=-\infty}^{\infty}\frac{\rho '(\rho^{k}\circ\phi \circ \psi (c))}{\rho '(\rho^{k}\circ \phi(c))}.\]
Then,
\begin{eqnarray*}\prod_{k=-\infty}^{\infty}\rho '(\rho^{k}\circ\phi \circ \psi (c))&=& \prod_{k=-\infty}^{\infty}\frac{(\rho^{k}\circ\phi)'(\psi(c))}{(\rho^{k-1}\circ\phi)'(\psi(c))  }
\\ &=& \lim_{M\rightarrow\infty}\frac{(\rho^{M}\circ\phi)'(\psi(c))}{\rho^{-M-1}\phi'(\psi(c))  }=1, \end{eqnarray*}
where, for the last equality, we have used the fact that $\rho^{M}$ and $\rho^{-M}$ both converge to the unique fixed point of $\rho$ (since $\rho$ is parabolic) as $M\rightarrow\infty$. Similarly,
\[\prod_{k=-\infty}^{\infty}\rho '(\rho^{k}\circ\phi(c))=1\]
and hence
\[\chi (\rho )=1.\]

\ \\
(iv): Suppose that $p\in\mathbb{R}$ is a fixed point of a parabolic element of $\Gamma'$. Then we can find a M\"{o}bius transformation $\zeta$ such that $\zeta(\mathbb{H})=\mathbb{H}$ and $\zeta(p)=\infty$. Then using (\ref{leqleq}), we can write
\[\Psi(\zeta(z))=\prod_{\phi\in\Gamma'}\frac{\zeta'(\zeta^{-1}\circ\phi\circ\psi(c))}{\zeta'(\zeta^{-1}\circ\phi(c))}\left(\frac{z-\zeta^{-1}\circ\phi\circ \psi(c)}{z-\zeta^{-1}\circ\phi(c)}\right)^{2}.\]
Then as $\mathrm{Im}[z]\rightarrow\infty$,
\[ \Psi(\zeta(z))\rightarrow \prod_{\phi\in\Gamma'}\frac{\zeta'(\zeta^{-1}\circ\phi\circ\psi(c))}{\zeta'(\zeta^{-1}\circ\phi(c))}\]
and this implies that the limit of $\Psi(z)$ as $z\rightarrow p$ inside a fundamental region exists.

\ \\
(v): Let
\[\Psi^{*}(z)=\left(\prod_{\phi\in\Gamma'}\frac{z-\phi\circ \psi(c')}{z-\phi(c')}\right)^{2}.\]
From the formula for the character in part (ii), we can see that the character of $\Psi$ and $\Psi^{*}$ only depend on the accumulation points of the sets
$\{\phi(c):\phi\in\Gamma'\}$ and $\{\phi(c'):\phi\in\Gamma'\}$ but this is exactly the limit set $\Lambda'$ (see Section 2). Hence the characters of $\Psi$ and $\Psi^{*}$ are identical and part (iv) implies that $\Psi(z)/\Psi^{*}(z)$ is a simple automorphic function with no zeroes or poles. By Lemma \ref{lemauto}(iii) it is constant. Moreover, since $\infty$ is an ordinary point of $\Gamma_{t}$, by considering the limit as $z\rightarrow\infty$, we deduce that $\Psi(z)/\Psi^{*}(z)=1$.
\end{proof}
We end this section with the main lemma which we will use to determine $P(z,t)$.
\begin{lemma}\label{lem4} Suppose that $\infty\not\in \Lambda_{t}$ for all $t\in[0,T]$ and there exists $c\in\mathbb{R}\setminus\Lambda_{t}$ for all $t\in[0,T]$.
Suppose that $\psi_{1,t},\ldots,\psi_{N,t}\in\Gamma_{t}$ satisfy
\[\psi_{k,t}=f_{t}^{-1}\circ\psi_{k}\circ f_{t}\]
for some $\psi_{k}\in\Gamma$ for each $k=1,\ldots,N$ and let $\delta_{1}(t),\ldots,\delta_{N}(t)$ be real-valued functions on $[0,T]$. For $t\in(0,T)$ such that $P(z,t)$ is defined, let
\[\Psi_{t}(z)=\prod_{k=1}^{N} \Psi_{k,t}(z)^{\delta_{k}(t)},\]
where, for $k=1,\ldots,N$,
\[\Psi_{k,t}(z)=\prod_{\phi_{t}\in\Gamma_{t}}\left[\frac{z-\phi_{t}\circ \psi_{k,t}(c)}{z-\phi_{t}(c)}\right]^{2}.\]
 Define
\[\Xi_{t}(z)=\frac{1}{2}\frac{\Psi_{t}'(z)}{\Psi_{t}(z)}P(z,t) -\sum_{k=1}^{N} \delta_{k}(t)\frac{\dot{\Psi}_{k,t}(z)}{\Psi_{k,t}(z)}.\]
Then $\Xi_{t}(z)$ is a meromorphic function in $\widehat{\mathbb{C}}\setminus\Lambda_{t}$ whose only poles are simple poles of real residue at $\phi_{t}(\xi(t))$ for each $\phi_{t}\in\Gamma_{t}$. Moreover, $\Xi_{t}(z)$ satisfies
\[\Xi_{t}(\rho_{t}(z))=\Xi_{t}(z)-\sum_{k=1}^{N}\delta_{k}(t)\frac{\frac{d}{dt}\left(\chi_{k,t}(\rho_{t})\right)}{\chi_{k,t}(\rho_{t})}\]
for any $\rho_{t}\in\Gamma_{t}$ where
\[\chi_{k,t}(\rho_{t})=\prod_{\phi_{t}\in\Gamma_{t}}\left(\frac{\rho_{t}'(\phi_{t}\circ\psi_{k,t}(c))}{\rho_{t}'(\phi_{t}(c))}\right)\]
for $k=1,\ldots,N$.
\end{lemma}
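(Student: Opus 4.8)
The plan is to analyze $\Xi_t(z)$ in three stages: first establish its meromorphy and locate its poles, then compute its residues, and finally derive the transformation rule under $\Gamma_t$. For the first stage, I would observe that by Lemma~\ref{lem3}(i) applied to $\Gamma' = \Gamma_t$ with $\psi = \psi_{k,t}$, each $\Psi_{k,t}$ is holomorphic and zero-free in $\widehat{\mathbb{C}}\setminus\Lambda_t$. Hence $\Psi_{k,t}'/\Psi_{k,t}$ is holomorphic there, and since $\Psi_{k,t}(z)$ depends on $t$ only through the parameters $\psi_{k,t}(c)$ and the action of $\Gamma_t$ (all of which are differentiable in $t$ for a.e.\ $t$ by hypothesis), the logarithmic derivative $\dot\Psi_{k,t}/\Psi_{k,t}$ is likewise holomorphic in $\widehat{\mathbb{C}}\setminus\Lambda_t$. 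The only possible singularities of $\Xi_t$ therefore come from the factor $P(z,t)$, which by Lemma~\ref{lem2} is meromorphic in $\widehat{\mathbb{C}}\setminus\Lambda_t$ with simple poles of negative residue exactly at the points $\phi_t(\xi(t))$, $\phi_t\in\Gamma_t$. Since $\Psi_t'(z)/\Psi_t(z) = \sum_k \delta_k(t)\,\Psi_{k,t}'(z)/\Psi_{k,t}(z)$ is holomorphic and real-analytic-coefficiented near each such pole (and in particular finite and real on $\mathbb{R}\setminus\Lambda_t$, where $\xi(t)$ lives), multiplying $P(z,t)$ by this holomorphic function preserves the simple-pole structure; the residue at $\phi_t(\xi(t))$ becomes $\tfrac12\big(\Psi_t'/\Psi_t\big)(\phi_t(\xi(t)))$ times the residue of $P$ there, which is a real number because $\Psi_t'/\Psi_t$ is real on the real axis away from $\Lambda_t$. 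This gives the asserted pole structure with real residues.

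For the transformation rule, I would start from Lemma~\ref{lem1}, which says $P(\rho_t(z),t) = \rho_t'(z)P(z,t) - \dot\rho_t(z)$ for $\rho_t\in\Gamma_t$, together with the character-automorphy from Lemma~\ref{lem3}(ii): $\Psi_{k,t}(\rho_t(z)) = \chi_{k,t}(\rho_t)\,\Psi_{k,t}(z)$, hence $\Psi_t(\rho_t(z)) = \big(\prod_k \chi_{k,t}(\rho_t)^{\delta_k(t)}\big)\Psi_t(z)$. Differentiating the character relation in $z$ gives $\Psi_{k,t}'(\rho_t(z))\rho_t'(z) = \chi_{k,t}(\rho_t)\Psi_{k,t}'(z)$, so $\big(\Psi_{k,t}'/\Psi_{k,t}\big)(\rho_t(z))\,\rho_t'(z) = \big(\Psi_{k,t}'/\Psi_{k,t}\big)(z)$, and summing against $\delta_k(t)$ yields $\tfrac12\big(\Psi_t'/\Psi_t\big)(\rho_t(z))\,\rho_t'(z) = \tfrac12\big(\Psi_t'/\Psi_t\big)(z)$. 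Multiplying this by $P(\rho_t(z),t) = \rho_t'(z)P(z,t) - \dot\rho_t(z)$ and dividing through by $\rho_t'(z)$, the first term reproduces $\tfrac12(\Psi_t'/\Psi_t)(z)P(z,t)$ and leaves an extra term $-\tfrac12\big(\Psi_t'/\Psi_t\big)(\rho_t(z))\,\dot\rho_t(z)/\rho_t'(z)$ — but here I should instead keep the bookkeeping at the level of $\Xi_t$ directly: write $\Xi_t(\rho_t(z))$ using the above two relations and the relation obtained by differentiating $\Psi_{k,t}(\rho_t(z)) = \chi_{k,t}(\rho_t)\Psi_{k,t}(z)$ in $t$, namely $\dot\Psi_{k,t}(\rho_t(z)) + \Psi_{k,t}'(\rho_t(z))\dot\rho_t(z) = \dot\chi_{k,t}(\rho_t)\Psi_{k,t}(z) + \chi_{k,t}(\rho_t)\dot\Psi_{k,t}(z)$. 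Dividing by $\Psi_{k,t}(\rho_t(z)) = \chi_{k,t}(\rho_t)\Psi_{k,t}(z)$ gives
\[
\frac{\dot\Psi_{k,t}(\rho_t(z))}{\Psi_{k,t}(\rho_t(z))} + \frac{\Psi_{k,t}'(\rho_t(z))}{\Psi_{k,t}(\rho_t(z))}\dot\rho_t(z) = \frac{\dot\chi_{k,t}(\rho_t)}{\chi_{k,t}(\rho_t)} + \frac{\dot\Psi_{k,t}(z)}{\Psi_{k,t}(z)}.
\]

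Now assembling $\Xi_t(\rho_t(z)) = \tfrac12\big(\Psi_t'/\Psi_t\big)(\rho_t(z))P(\rho_t(z),t) - \sum_k\delta_k(t)\big(\dot\Psi_{k,t}/\Psi_{k,t}\big)(\rho_t(z))$, substitute Lemma~\ref{lem1} into the first term, use $\tfrac12(\Psi_t'/\Psi_t)(\rho_t(z))\rho_t'(z) = \tfrac12(\Psi_t'/\Psi_t)(z)$ and $\big(\Psi_{k,t}'/\Psi_{k,t}\big)(\rho_t(z))\rho_t'(z) = \big(\Psi_{k,t}'/\Psi_{k,t}\big)(z)$, and replace the $\dot\Psi_{k,t}(\rho_t(z))/\Psi_{k,t}(\rho_t(z))$ terms via the displayed identity. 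The contributions involving $\dot\rho_t(z)$ will cancel against the $-\dot\rho_t(z)$ coming from Lemma~\ref{lem1} combined with the $\big(\Psi_{k,t}'/\Psi_{k,t}\big)(\rho_t(z))\dot\rho_t(z)$ terms — this is the arithmetic I'd need to carry through carefully — and what survives is exactly $\Xi_t(z) - \sum_k \delta_k(t)\,\dot\chi_{k,t}(\rho_t)/\chi_{k,t}(\rho_t)$, as claimed. The main obstacle is precisely this cancellation: one must be careful that $\tfrac12(\Psi_t'/\Psi_t)(\rho_t(z))\,\dot\rho_t(z)$ is correctly matched, term by term in $k$ and over the sum defining $\Psi_t'/\Psi_t$, against $\sum_k\delta_k(t)\big(\Psi_{k,t}'/\Psi_{k,t}\big)(\rho_t(z))\,\dot\rho_t(z)$ — the factor of $\tfrac12$ versus the squares in the definition of $\Psi_{k,t}$ must be tracked, and one should double-check that differentiating the infinite products in $t$ is justified by the locally uniform convergence from Lemma~\ref{lempoin}. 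A secondary point worth stating explicitly is that $\dot\chi_{k,t}(\rho_t)/\chi_{k,t}(\rho_t)$ is constant in $z$ (it depends only on $\rho_t$), so the right-hand side is a genuine additive cocycle, consistent with $\Xi_t$ being "additively automorphic up to a constant."
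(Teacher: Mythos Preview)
Your approach is essentially identical to the paper's: both invoke Lemma~\ref{lem3} for the character-automorphy of each $\Psi_{k,t}$, differentiate the relation $\Psi_{k,t}(\rho_t(z))=\chi_{k,t}(\rho_t)\Psi_{k,t}(z)$ separately in $z$ and in $t$, combine with Lemma~\ref{lem1} for $P(\rho_t(z),t)$, and substitute into the definition of $\Xi_t$; the pole structure is then read off from Lemmas~\ref{lem2} and~\ref{lem3}(i). Your displayed identity for $\dot\Psi_{k,t}(\rho_t(z))/\Psi_{k,t}(\rho_t(z))$ is exactly what the paper uses (the paper's printed version carries a stray $\delta_k(t)$ in front of the $\dot\chi_{k,t}/\chi_{k,t}$ term, which is a typo).

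Your instinct about the factor $\tfrac12$ is well placed and worth recording. If one carries the bookkeeping through with $\Xi_t=\tfrac12(\Psi_t'/\Psi_t)P-\sum_k\delta_k\,\dot\Psi_{k,t}/\Psi_{k,t}$ and the correct identity $\Psi_t'/\Psi_t=\sum_k\delta_k\,\Psi_{k,t}'/\Psi_{k,t}$, the $\dot\rho_t$ contributions do \emph{not} cancel: a term $\tfrac12(\dot\rho_t/\rho_t')(\Psi_t'/\Psi_t)(z)$ survives. The paper's intermediate line $\Psi_t'/\Psi_t=\sum_k 2\delta_k\,\Psi_{k,t}'/\Psi_{k,t}$ (with a spurious $2$) is what makes its substitution appear to close up. The clean repair is to put a matching $\tfrac12$ in front of the second sum in the definition of $\Xi_t$; the transformation constant then becomes $\tfrac12\sum_k\delta_k\,\dot\chi_{k,t}/\chi_{k,t}$, and nothing in Section~4 is affected since the $\delta_k(t)$ are chosen precisely to annihilate that constant. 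So your plan is correct, and the point you flagged as ``the main obstacle'' is exactly where the printed statement needs a small cosmetic fix.
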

\begin{proof}
Firstly, by Lemma \ref{lem3}, $\Psi_{k,t}(z)$ satisfies
\[\Psi_{k,t}(\rho_{t}(z))=\chi_{k,t}(\rho_{t})\Psi_{k,t}(z)\]
for all $\rho_{t}\in\Gamma_{t}$. This implies that $\Psi_{t}(z)$ satisfies
\[\Psi_{t}(\rho_{t}(z))=\chi_{t}(\rho_{t})\Psi_{t}(z)\]
for all $\rho_{t}\in\Gamma_{t}$ where
\[\chi_{t}(\rho_{t})=\prod_{k=1}^{N}\chi_{k,t}(\rho_{t})^{\delta_{k}(t)}.\]
 This implies that for any $\rho_{t}\in\Gamma_{t}$,
\begin{eqnarray*}
\frac{\Psi_{t}'(z)}{\Psi_{t}(z)}&=&\rho_{t}'(z)\frac{\Psi_{t}'(\rho_{t}(z))}{\Psi_{t}(\rho_{t}(z))}=\rho_{t}'(z)\sum_{k=1}^{N}2\delta_{k}(t)\frac{\Psi_{k,t}'(\rho_{t}(z))}{\Psi_{k,t}(\rho_{t}(z))} \\
\frac{\dot{\Psi}_{k,t}(z)}{\Psi_{k,t}(z)}&=&\frac{\dot{\Psi}_{k,t}(\rho_{t}(z))}{\Psi_{k,t}(\rho_{t}(z))}+
\dot{\rho}_{t}(z)\frac{\Psi_{k,t}'(\rho_{t}(z))}{\Psi_{k,t}(\rho_{t}(z))} -  \delta_{k}(t)\frac{\frac{d}{dt}\left(\chi_{k,t}(\rho_{t})\right)}{\chi_{k,t}(\rho_{t})}
\end{eqnarray*}
Also, by Lemma \ref{lem1},
\[P(z,t)=\frac{1}{\rho_{t}'(z)}P(\rho_{t}(z),t)+\frac{\dot{\rho}_{t}(z)}{\rho_{t}'(z)}.\]
Substituting these into the formula for $\Xi_{t}(z)$, we get
\[\Xi_{t}(\rho_{t}(z))=\Xi_{t}(z)-\sum_{k=1}^{N}\delta_{k}(t)\frac{\frac{d}{dt}\left(\chi_{k,t}(\rho_{t})\right)}{\chi_{k,t}(\rho_{t})}.\]
By Lemmas \ref{lem2} and \ref{lem3}(i), we deduce that $\Xi_{t}(z)$ is a meromorphic function in $\widehat{\mathbb{C}}\setminus\Lambda$ whose only poles are simple poles of real residue at $\phi_{t}(\xi(t))$ for each $\phi_{t}\in\Gamma_{t}$.
\end{proof}
\begin{remark}
Lemma \ref{lem4} also holds if
\[\Psi_{k,t}(z) = \prod_{\phi_{t}\in\Gamma_{t}}\left[ \frac{z-a_{k}(t)}{z-b_{k}(t)}\right]^{2},\]
where $a_{k}(t)=f_{t}^{-1}(A_{k})$ and $b_{k}(t)=f_{t}^{-1}(B_{k})$ for
\[A_{k},B_{k}\in\mathbb{H}\setminus\pi^{-1}(\gamma(0,T]).\]
Note that in this case,  $\Xi_{t}(z)$ does not have poles at $a_{k}(t)$ and $b_{k}(t)$. This can be seen by considering the residue of  $\Xi_{t}$ at $z=\phi_{t}(a_{k}(t))$ for some $\phi_{t}\in\Gamma_{t}$ and  $k=1,\ldots,N$: this is
\[P(\phi_{t}(a_{k}(t)),t)-\phi_{t}'(a_{k}(t))\dot{a}_{k}(t)+\dot{\phi}_{t}(a_{k}(t))\]
and by Lemma \ref{lem1}, it is equal to zero. Similarly for $\phi_{t}(b_{k}(t))$.
\end{remark}
\section{The proof of Theorem \ref{genslitRS}}
Our strategy to prove Theorem \ref{genslitRS} is as follows: we first construct a function $\Upsilon_{t}$ that is a possible candidate for the function $\Xi_{t}$ given in Lemma \ref{lem4}. We will then show that if we choose $\psi_{1,t},\ldots,\psi_{N,t}$ and $\delta_{1}(t),\ldots,\delta_{N}(t)$  in Lemma \ref{lem4} appropriately, then $\Xi_{t}(z)$ will be equal to $\Upsilon_{t}(z)$ plus a function which only depends on $t$.
\begin{lemma}\label{lem5}
Suppose that $\Gamma'$ is a Fuchsian group of the second kind with limit set $\Lambda'$ and let $C\in\mathbb{R}$ and $c\in\mathbb{R}\setminus\Lambda'$. Let
\[\Upsilon_{\Gamma'}(z)=C+\sum_{\phi\in\Gamma'}\frac{1}{\phi(z)}-\frac{1}{\phi(c)}.\]
Then
\begin{enumerate}
\item[(i)] $\Upsilon_{\Gamma'}$ is a meromorphic function in $\widehat{\mathbb{C}}\setminus\Lambda'$ whose only poles are simple poles of real residue at $\phi(0)$ for each $\phi\in\Gamma'$;
\item[(ii)] $\Upsilon_{\Gamma'}$ satisfies
\[\Upsilon_{\Gamma'}(\rho(z))=\Upsilon_{\Gamma'}(z)+\left[\sum_{\phi\in\Gamma'}\frac{1}{\phi\circ\rho(c)}-\frac{1}{\phi(c)}\right].\]
for any $\rho\in\Gamma'$;
\item[(iii)] if $\rho\in\Gamma'$ is an elliptic or parabolic M\"{o}bius transformation, then
\[\Upsilon_{\Gamma'}(\rho(z))=\Upsilon_{\Gamma'}(z);\]
\item[(iv)] the limit of $\Upsilon_{\Gamma'}(z)$ as $z$ tends to a fixed point of a parabolic element of $\Gamma'$ inside a fundamental region exists;
\item[(v)]  up to a different constant $C$, $\Upsilon_{\Gamma'}(z)$ does not depend on the choice of $c\in\mathbb{R}\setminus\Lambda'$ i.e. for $c'\in\mathbb{R}\setminus\Lambda'$,
\[\Upsilon_{\Gamma'}(z)=C'+\sum_{\phi\in\Gamma'}\frac{1}{\phi(z)}-\frac{1}{\phi(c')}\]
for some $C'\in\mathbb{R}$.
\end{enumerate}
\end{lemma}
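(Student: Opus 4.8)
The plan is to prove the five parts of Lemma~\ref{lem5} more or less in the order listed, since each builds on the previous one. For part (i), the key point is convergence of the series $\sum_{\phi\in\Gamma'}\left(\frac{1}{\phi(z)}-\frac{1}{\phi(c)}\right)$ locally uniformly on $\widehat{\mathbb{C}}\setminus\Lambda'$. I would write each summand over a common denominator: since $\phi$ is a M\"{o}bius transformation, $\frac{1}{\phi(z)}-\frac{1}{\phi(c)} = \frac{\phi(c)-\phi(z)}{\phi(z)\phi(c)}$, and using the identity $\phi(z)-\phi(c) = \frac{(z-c)}{(\text{denominator of }\phi \text{ at }z)(\text{denominator at }c)}\cdot\det$ one sees that the numerator is comparable to $(z-c)\sqrt{|\phi'(z)||\phi'(c)|}$ while the denominator $\phi(z)\phi(c)$ stays bounded away from $0$ and $\infty$ on compact subsets of $\widehat{\mathbb{C}}\setminus\Lambda'$ (recall $\phi(c)\to\Lambda'$ and, on a compact set avoiding $\Lambda'$, $\phi(z)$ likewise avoids a neighbourhood of $0$ except for finitely many $\phi$). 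Hence the series is dominated by a constant times $\sum_{\phi}|\phi'(c)|^{1/2}|\phi'(z)|^{1/2} \le \frac{1}{2}\sum_\phi(|\phi'(c)|+|\phi'(z)|)$, which converges by Lemma~\ref{lempoin}. The only poles come from the finitely many $\phi$ with $\phi(z)=0$ near a given compact set; since $\phi$ is conformal, these are simple poles, and the residue at $\phi(0)$ equals $1/\phi'(\phi^{-1}(0))^{-1}$-type expression which is real because all coefficients of $\phi$ are real.

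For part (ii), I would simply reindex: replacing $z$ by $\rho(z)$ turns the sum into $\sum_{\phi\in\Gamma'}\left(\frac{1}{(\phi\circ\rho)(z)}-\frac{1}{\phi(c)}\right)$, and since $\phi\mapsto\phi\circ\rho$ is a bijection of $\Gamma'$ we may rewrite $\sum_\phi\frac{1}{(\phi\circ\rho)(z)} = \sum_\phi\frac{1}{\phi(z)}$; the discrepancy between the two normalizing terms $\frac{1}{\phi(c)}$ and $\frac{1}{(\phi\circ\rho)(c)}$ produces exactly the stated cocycle $\sum_\phi\left(\frac{1}{(\phi\circ\rho)(c)}-\frac{1}{\phi(c)}\right)$. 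The one subtlety is that the individual series $\sum_\phi\frac{1}{\phi(z)}$ need not converge, so this manipulation should be carried out on the convergent combined series by adding and subtracting; I would phrase it as: $\Upsilon_{\Gamma'}(\rho(z)) - \Upsilon_{\Gamma'}(z) = \sum_\phi\left[\left(\frac{1}{(\phi\circ\rho)(z)}-\frac{1}{\phi(z)}\right)\right]$ and note that after reindexing the telescoping pieces cancel up to the boundary term, which lives in the limit set and contributes the claimed constant (in $z$) expression. Part (iii) is then a direct consequence: if $\rho$ is parabolic or elliptic, the cocycle $\sum_\phi\left(\frac{1}{(\phi\circ\rho)(c)}-\frac{1}{\phi(c)}\right)$ vanishes by exactly the same telescoping argument used in Lemma~\ref{lem3}(iii) — for elliptic $\rho$ of finite order $M$ one groups the sum into cosets of $\langle\rho\rangle$ and each finite cyclic block telescopes to zero; for parabolic $\rho$ one does the same with a bi-infinite telescoping, using that $\rho^{\pm M}(c)$ converge to the common fixed point of $\rho$ so that $\frac{1}{\rho^{\pm M}\phi(c)}$ has a common limit.

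For part (iv), I would conjugate $p$ (the parabolic fixed point) to $\infty$ by a real M\"{o}bius transformation $\zeta$, exactly as in the proof of Lemma~\ref{lem3}(iv): writing $\Upsilon_{\Gamma'}(\zeta(z))$ and using $\frac{1}{\zeta(w)} = \frac{c_\zeta w + d_\zeta}{a_\zeta w + b_\zeta}$ one checks that as $\operatorname{Im}[z]\to\infty$ inside a fundamental region the sum converges to a finite limit — the tail is still controlled by $\sum|\phi'|$ uniformly near $\infty$, and the finitely many terms that could blow up do not because $z$ stays away from the preimages of $0$. Part (v) follows because $\Upsilon_{\Gamma'}(z) - \left(C + \sum_\phi\left(\frac{1}{\phi(z)}-\frac{1}{\phi(c')}\right)\right) = (C-C') + \sum_\phi\left(\frac{1}{\phi(c')}-\frac{1}{\phi(c)}\right)$, and the series on the right converges (same domination) to a real constant independent of $z$; absorbing it into $C'$ gives the claim. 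The main obstacle, I expect, is part (ii): being careful that all reindexings are performed only on absolutely (locally uniformly) convergent series and that the ``boundary term'' of the telescoping is correctly identified as the stated $\rho$-cocycle rather than something that secretly depends on $z$ — the cleanest way is to first establish, as a lemma-within-the-proof, that $\sum_\phi\left(\frac{1}{(\phi\circ\rho)(z)}-\frac{1}{\phi(z)}\right)$ converges locally uniformly and equals a function of $t$ (here, of $\rho$) alone, which is really the same convergence estimate applied with the two points $z$ and... well, it requires comparing $\frac{1}{(\phi\circ\rho)(z)}$ against $\frac{1}{\phi(z)}$, and since $(\phi\circ\rho)'(z) = \phi'(\rho(z))\rho'(z)$, the difference is again $O(|\phi'(\rho(z))|^{1/2}|\phi'(z)|^{1/2})$ up to bounded factors, hence summable by Lemma~\ref{lempoin}.
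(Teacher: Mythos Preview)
Your proposal is correct and follows essentially the same route as the paper: part (i) via the common-denominator identity $\frac{1}{\phi(z)}-\frac{1}{\phi(c)}=-\frac{\phi(z)-\phi(c)}{\phi(z)\phi(c)}$ and Lemma~\ref{lempoin}; part (ii) by reindexing after adding and subtracting $\frac{1}{\phi\circ\rho(c)}$; parts (iii) and (iv) by the same coset-telescoping and conjugation-to-$\infty$ arguments as in Lemma~\ref{lem3}; and part (v) by observing that the difference is the convergent constant $\sum_\phi\bigl(\frac{1}{\phi(c')}-\frac{1}{\phi(c)}\bigr)$.

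One minor simplification: the extra convergence worry you raise at the end for part (ii) --- establishing separately that $\sum_\phi\bigl(\frac{1}{(\phi\circ\rho)(z)}-\frac{1}{\phi(z)}\bigr)$ converges --- is unnecessary. The paper (and your own first suggestion) simply writes
\[
\frac{1}{\phi\circ\rho(z)}-\frac{1}{\phi(c)}
=\Bigl(\frac{1}{\phi\circ\rho(z)}-\frac{1}{\phi\circ\rho(c)}\Bigr)
+\Bigl(\frac{1}{\phi\circ\rho(c)}-\frac{1}{\phi(c)}\Bigr),
\]
and each bracketed piece is already of the form handled by part (i) (with $z$ replaced by $\rho(z)$, or with the two fixed points $\rho(c)$ and $c$), so both partial sums converge absolutely and no further estimate is needed.
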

\begin{proof}
(i): Note that
\[\frac{1}{\phi(z)}-\frac{1}{\phi(c)}= -\frac{\phi(z)-\phi(c)}{\phi(z)\phi(c)}.\]
Hence the locally uniform convergence of the sum in the definition of $\Upsilon_{\Gamma'}$ follows from Lemma \ref{lempoin} as in the proof of Lemma \ref{lem3}(i). Hence $\Upsilon_{\Gamma'}$ is a meromorphic function in $\widehat{\mathbb{C}}\setminus\Lambda'$ whose only pole is a simple pole at $\phi(0)$ for each $\phi\in\Gamma'$.

\ \\
(ii): For $\rho\in\Gamma'$,
\begin{eqnarray*}\Upsilon_{\Gamma'}(\rho(z))&=&\sum_{\phi\in\Gamma'}\frac{1}{\phi\circ\rho(z)}-\frac{1}{\phi(c)}\\
&=&\sum_{\phi\in\Gamma'}\left(\frac{1}{\phi\circ\rho(z)}-\frac{1}{\phi\circ\rho(c)}\right)+\left(\frac{1}{\phi\circ\rho(c)}-\frac{1}{\phi(c)}\right)\\
 &=& \Upsilon_{\Gamma'}(z)+\left[\sum_{\phi\in\Gamma'}\frac{1}{\phi\circ\rho(c)}-\frac{1}{\phi(c)}\right].
\end{eqnarray*}

\ \\
(iii): Let
\[K(\rho)=\sum_{\phi\in\Gamma'}\frac{1}{\phi\circ\rho(c)}-\frac{1}{\phi(c)}.\]
Then the proof that $K(\rho)=0$ whenever $\rho$ is an elliptic or parabolic M\"{o}bius transformation in $\Gamma'$ is similar to proof of Lemma \ref{lem3}(iii). We leave this to the reader.

\ \\
(iv): The proof of this is almost identical to the proof of Lemma \ref{lem3}(iv) so we also omit this proof.

\ \\
(v): Finally, if
\[\Upsilon^{*}(z)=\sum_{\phi\in\Gamma'}\frac{1}{\phi(z)}-\frac{1}{\phi(c')},\]
then
\[\Upsilon_{\Gamma'}(z)-\Upsilon^{*}(z)=\sum_{\phi\in\Gamma'}\frac{1}{\phi(c)}-\frac{1}{\phi(c')}=\text{constant}.\]
\end{proof}
As in the previous section, we will suppose that $\gamma:[0,T]\rightarrow R\cup \partial R$ is a simple curve such that $\gamma(0,T]\subset R$ and $\gamma(0)\in\partial R$ and $f_{t}$ is a Loewner chain corresponding to the curve $\gamma$ on $R$ so that each $f_{t}$ is a conformal map of $\mathbb{H}$ onto $\mathbb{H}\setminus\pi^{-1}(\gamma(0,t])$. We can write
\[\pi^{-1}(\gamma(0,t])=\bigcup_{\phi\in\Gamma}\phi( \widetilde{\gamma}(0,t]),\]
where $\widetilde{\gamma}:[0,T]\rightarrow \overline{\mathbb{H}}$ is a simple curve with $\widetilde{\gamma}(0)\in\mathbb{R}$ and $\widetilde{\gamma}(0,T]\subset\mathbb{H}$. Let $\xi(t)=f_{t}^{-1}(\widetilde{\gamma}(t))$.

Let $\Gamma'=\Gamma_{t}^{\xi}$ where $\Gamma_{t}^{\xi}$ is obtained by conjugating each M\"{o}bius transformation in $\Gamma_{t}$ with the map $z\mapsto z+\xi(t)$ i.e.
\[\Gamma_{t}^{\xi}=\{z\mapsto \phi_{t}(z+\xi(t))-\xi(t) :\phi_{t}\in\Gamma_{t}\}.\]
With $\Upsilon_{\Gamma_{t}^{\xi}}(z)$ as defined in Lemma \ref{lem5} (with $C=0$), we define
\begin{eqnarray*}\Upsilon_{t}(z)&=&\Upsilon_{\Gamma_{t}^{\xi}}(z-\xi(t))
\\ &=& \sum_{\phi_{t}\in\Gamma_{t}}\frac{1}{\phi_{t}(z)-\xi(t)}-\frac{1}{\phi_{t}(c)-\xi(t)}.
\end{eqnarray*}
By Lemma \ref{lem5}(i), $\Upsilon_{t}(z)$ is a meromorphic function in $\widehat{\mathbb{C}}\setminus\Lambda_{t}$ whose only poles are simple poles of real residue at $\phi_{t}(\xi(t))$ for each $\phi_{t}\in\Gamma_{t}$. Also, by Lemma \ref{lem5}(ii),
\[\Upsilon_{t}(\rho_{t}(z))=\Upsilon_{t}(z)+ \left[\sum_{\phi_{t}\in\Gamma_{t}}\frac{1}{\phi_{t}\circ\rho_{t}(c)-\xi(t)}-\frac{1}{\phi_{t}(c)-\xi(t)}\right]\]
for all $\rho_{t}\in\Gamma_{t}$.

We want to construct a function $\Xi_{t}(z)$ (as in Lemma \ref{lem4}) such that
\[\Xi_{t}(\rho_{t}(z))-\Upsilon_{t}(\rho_{t}(z))=\Xi_{t}(z)-\Upsilon_{t}(z)\]
for all $\rho_{t}\in\Gamma_{t}$. Note that, by Lemmas \ref{lem4} and \ref{lem5}(i), $\Xi_{t}(z)$ and $\Upsilon_{t}(z)$ are both meromorphic functions on $\mathbb{H}\setminus\Lambda_{t}$ whose only poles are single poles of real residue at the $\Gamma_{t}$-orbit of $\xi(t)$. Hence $\Theta_{t}(z)=\Xi_{t}(z)-\Upsilon_{t}(z)$ is an automorphic function with exactly one pole in any fundamental region of $\Gamma_{t}$. Then Lemma \ref{lemauto}(iv) will imply that $\Theta_{t}(z)$ depends only on $t$.

Note that by Lemmas \ref{lem4} and \ref{lem5}(ii), $\Xi_{t}(z)-\Upsilon_{t}(z)$ satisfies
\[\Xi_{t}(\rho_{t}(z))-\Upsilon_{t}(\rho_{t}(z))=\Xi_{t}(z)-\Upsilon_{t}(z) -(J_{t}(\rho_{t})+K_{t}(\rho_{t})),\] where
\begin{eqnarray}\label{7th}
J_{t}(\rho_{t})&=&\sum_{k=1}^{N}\delta_{k}(t)\frac{\frac{d}{dt}\left(\chi_{k,t}(\rho_{t})\right)}{\chi_{k,t}(\rho_{t})},\\ \label{8th}
K_{t}(\rho_{t})&=&\sum_{\phi_{t}\in\Gamma_{t}}\frac{1}{\phi_{t}\circ\rho_{t}(c)-\xi(t)}-\frac{1}{\phi_{t}(c)-\xi(t)},
\end{eqnarray}
where $\chi_{k,t}(\rho_{t})$ for $k=1,\ldots N$ is defined in Lemma \ref{lem4}. Hence we want to show that $J_{t}(\rho_{t})+K_{t}(\rho_{t})=0$ for all $\rho_{t}\in\Gamma_{t}$. By Lemmas \ref{lem3}(iii) and \ref{lem5}(iii),  when $\rho_{t}$ is an elliptic or parabolic M\"{o}bius transformation $J_{t}(\rho_{t})=K_{t}(\rho_{t})=0$ and hence
$J_{t}(\rho_{t})+K_{t}(\rho_{t})=0$.

Also note that $J_{t}$ and $K_{t}$ satisfy
\begin{eqnarray*}
J_{t}(\rho_{1,t}\circ\rho_{2,t})&=& J_{t}(\rho_{1,t})+J_{t}(\rho_{2,t})\\
K_{t}(\rho_{1,t}\circ\rho_{2,t})&=& K_{t}(\rho_{1,t})+K_{t}(\rho_{2,t})
\end{eqnarray*}
for any $\rho_{1,t},\rho_{2,t}\in\Gamma_{t}$. Hence $J_{t}$ and $K_{t}$ are uniquely determined by their values at the hyperbolic generators of $\Gamma_{t}$. For $t\in(0,T]$, we suppose that $\Gamma_{t}$ is generated freely by hyperbolic M\"{o}bius transformations $\phi_{1,t},\ldots,\phi_{n,t}$ and possibly some other elliptic or parabolic M\"{o}bius transformations. We can assume that
\[\phi_{k,t}=f_{t}^{-1}\circ\phi_{k}\circ f_{t}\]
for some $\phi_{k}\in\Gamma$ and hence $\phi_{k,t}(z)$ is differentiable almost everywhere with respect to $t$. We wish to construct $\Psi_{t}(z)$ such that $J_{t}(\phi_{k,t})+K_{t}(\phi_{k,t})=0$ for each $k=1,\ldots,n$.
\begin{lemma}\label{lem7}
Suppose that $\infty\not\in \Lambda_{t}$ for all $t\in[0,T]$ and there exists $c\in\mathbb{R}\setminus\Lambda_{t}$ for all $t\in[0,T]$. Suppose that $R_{t}:\Gamma_{t}\rightarrow \mathbb{R}$ satisfies
\[R_{t}(\rho_{1,t}\circ \rho_{2,t})=R_{t}(\rho_{1,t})+R_{t}(\rho_{2,t}) \text{ for all } \rho_{1,t},\rho_{2,t}\in\Gamma_{t}\]
and $R(\rho_{t})=0$ for elliptic or parabolic M\"{o}bius transformations $\rho_{t}\in\Gamma_{t}$.
Then, for $t\in(0,T)$ such that $P(z,t)$ is defined, we can find $\delta_{1}(t),\ldots,\delta_{n}(t)\in\mathbb{R}$ such that if
\[\Psi_{k,t}(z)=\left(\prod_{\phi_{t}\in\Gamma_{t}}\frac{z-\phi_{t}\circ \phi_{k,t}(c) }{z-\phi_{t}(c)}\right)^{2}\]
and
\[\Psi_{t}(z)=\prod_{k=1}^{n}\Psi_{k,t}(z)^{\delta_{k}(t)},\]
then $\Xi_{t}(z)$ as defined in Lemma \ref{lem4} satisfies
\[\Xi_{t}(\rho_{t}(z))=\Xi_{t}(z)+R_{t}(\rho_{t}) \text{ for any }\rho_{t}\in\Gamma_{t}.\]
\end{lemma}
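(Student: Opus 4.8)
The plan is to apply Lemma~\ref{lem4} with $N=n$ and $\psi_{k,t}=\phi_{k,t}$, the given hyperbolic free generators of $\Gamma_t$, and then to pin down the weights $\delta_1(t),\ldots,\delta_n(t)$ by solving a linear system. With these data Lemma~\ref{lem4} produces a function $\Xi_t$ satisfying
\[\Xi_t(\rho_t(z))=\Xi_t(z)-\sum_{k=1}^{n}\delta_k(t)\,\frac{\frac{d}{dt}\chi_{k,t}(\rho_t)}{\chi_{k,t}(\rho_t)}\qquad(\rho_t\in\Gamma_t),\]
and the sum on the right is exactly the function $J_t$ of (\ref{7th}). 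Hence the identity we must arrange, $\Xi_t(\rho_t(z))=\Xi_t(z)+R_t(\rho_t)$, holds for every $\rho_t\in\Gamma_t$ precisely when $J_t(\rho_t)=-R_t(\rho_t)$ for every $\rho_t\in\Gamma_t$.

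First I would reduce this to finitely many scalar equations. Both $J_t$ and $-R_t$ are homomorphisms $\Gamma_t\to\mathbb{R}$ (additive under composition): for $J_t$ this is the multiplicativity of the characters $\chi_{k,t}$ recalled in Section~2, and for $R_t$ it is the hypothesis. Both also vanish on elliptic and parabolic elements of $\Gamma_t$: for $R_t$ this is the other hypothesis, and for $J_t$ it follows from Lemma~\ref{lem3}(iii), which gives $\chi_{k,t}(\rho_t)\equiv 1$ --- hence $\frac{d}{dt}\chi_{k,t}(\rho_t)\equiv 0$ --- whenever $\rho_t$ is elliptic or parabolic. Since $\Gamma_t$ is freely generated by $\phi_{1,t},\ldots,\phi_{n,t}$ together with some elliptic and parabolic elements, a homomorphism $\Gamma_t\to\mathbb{R}$ that vanishes on the elliptic and parabolic generators is uniquely determined by its values at $\phi_{1,t},\ldots,\phi_{n,t}$, and these values may be prescribed freely. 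Thus $J_t=-R_t$ on all of $\Gamma_t$ is equivalent to the $n\times n$ linear system
\[\sum_{k=1}^{n}m_{jk}(t)\,\delta_k(t)=-R_t(\phi_{j,t})\qquad(j=1,\ldots,n),\qquad m_{jk}(t):=\frac{d}{dt}\log\chi_{k,t}(\phi_{j,t}).\]
The entries $m_{jk}(t)$ and the right-hand sides $R_t(\phi_{j,t})$ are real, since each $\chi_{k,t}$ is positive on $\mathbb{R}\setminus\Lambda_t$ (the factors of $\Psi_{k,t}$ are squares of real functions there). Consequently, once the coefficient matrix $M(t)=(m_{jk}(t))$ is known to be invertible we may put $\delta(t)=-M(t)^{-1}\bigl(R_t(\phi_{1,t}),\ldots,R_t(\phi_{n,t})\bigr)^{\mathsf T}\in\mathbb{R}^n$, and then the function $\Xi_t$ built from these $\delta_k(t)$ by Lemma~\ref{lem4} has the asserted transformation property.

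It remains to prove that $M(t)$ is invertible for almost every $t\in(0,T)$ at which $P(z,t)$ is defined; this is the statement the paper isolates as Lemma~\ref{lem8}, and I expect it to be the crux of the whole argument. The route I would take has two parts. First, symmetry of $M(t)$: using the identity $\Psi_{\psi\rho}=\Psi_\psi\Psi_\rho$ (a consequence of Lemma~\ref{lem3}(v), where $\Psi_\psi$ denotes the product of Lemma~\ref{lem4} built from $\psi\in\Gamma_t$), one sees that $\log\chi_{k,t}(\phi_{j,t})$ is the period of the $\Gamma_t$-invariant holomorphic differential $d\log\Psi_{k,t}$ around the loop represented by $\phi_{j,t}$ on the Riemann surface $\widehat{\mathbb{C}}/\Gamma_t$; a reciprocity law for such products (of the kind in \cite[Section~99]{Ford}) then gives $\log\chi_{k,t}(\phi_{j,t})=\log\chi_{j,t}(\phi_{k,t})$, and differentiating in $t$ yields $m_{jk}(t)=m_{kj}(t)$. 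Second, non-degeneracy: the symmetric period form $L(t)=\bigl(\log\chi_{k,t}(\phi_{j,t})\bigr)_{j,k}$ should be non-degenerate --- its quadratic form evaluated at $a$ is $\log\chi(\psi)$, where $\psi=\prod_j\phi_{j,t}^{a_j}$ and $\chi$ is the character of $\prod_{\phi_t\in\Gamma_t}\bigl((z-\phi_t\psi(c))/(z-\phi_t(c))\bigr)^2$, which one checks has a fixed sign for $\psi$ nontrivial hyperbolic, or alternatively one invokes the Riemann bilinear relations for $\widehat{\mathbb{C}}/\Gamma_t$ --- and one must then transfer this non-degeneracy to the time derivative $M(t)=\dot L(t)$, for instance by a monotonicity of $L(t)$ under the growth of the hulls $\pi^{-1}(\gamma(0,t])$ (using that $\gamma$ is simple and strictly growing), or by real-analyticity of $\det M(t)$ in $t$ together with control of $M(t)$ as $t\downarrow 0$. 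I expect this last transfer --- from non-degeneracy of the period form $L(t)$ to non-degeneracy of its time derivative $M(t)$ --- to be the genuinely delicate point; by contrast, the reductions of the first two paragraphs are routine bookkeeping with Lemmas~\ref{lem3} and \ref{lem4} and the free-product structure of $\Gamma_t$.
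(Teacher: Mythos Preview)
Your reduction in the first two paragraphs is exactly right and matches the paper: one applies Lemma~\ref{lem4} with $\psi_{k,t}=\phi_{k,t}$, observes that both $J_t$ and $-R_t$ are real homomorphisms $\Gamma_t\to\mathbb{R}$ vanishing on elliptic and parabolic elements, and concludes that everything reduces to the $n\times n$ linear system $\sum_k m_{jk}(t)\delta_k(t)=-R_t(\phi_{j,t})$. So far so good.

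The gap is in your third paragraph. First, a misattribution: the invertibility of $M(t)$ is not isolated as Lemma~\ref{lem8}; it is the entire content of the present lemma, and Lemma~\ref{lem8} merely applies it. More importantly, your proposed route to invertibility does not close. Even granting symmetry of $M(t)$ and non-degeneracy of the period form $L(t)=(\log\chi_{k,t}(\phi_{j,t}))$, there is no general principle that transfers non-degeneracy of $L(t)$ to non-degeneracy of $\dot L(t)=M(t)$: a nonsingular matrix can have a singular time-derivative. Your suggested fixes (monotonicity of $L(t)$ under hull growth, or real-analyticity of $\det M(t)$ in $t$ together with an initial-time analysis) are plausible programs but neither is carried out, and each would require substantial new input. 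As you yourself say, this is ``the genuinely delicate point'' --- and it is left open.

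The paper avoids this altogether by exploiting the function $P(z,t)$ directly. Suppose $\sum_k\alpha_k X_k=0$ for some real $\alpha_k$, where $X_k$ is the $k$-th column of $M(t)$. Form $\Psi_t^*=\prod_k\Psi_{k,t}^{\alpha_k}$ and the associated $\Xi_t^*$ from Lemma~\ref{lem4}. The vanishing linear combination makes $\Xi_t^*$ automorphic; together with Lemmas~\ref{lem2} and \ref{lem3}(iv) it is a simple automorphic function whose only possible poles are simple poles on the $\Gamma_t$-orbit of $\xi(t)$. Lemma~\ref{lemauto}(iv) then forces those residues to vanish, so $\Xi_t^*$ is pole-free. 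But $P(z,t)$ \emph{does} have a pole at $\xi(t)$ (Lemma~\ref{lem2}); looking at the formula $\Xi_t^*=\tfrac12\,\frac{(\Psi_t^*)'}{\Psi_t^*}\,P-\sum_k\alpha_k\frac{\dot\Psi_{k,t}}{\Psi_{k,t}}$, the only way to cancel that pole is $(\Psi_t^*)'/\Psi_t^*\equiv 0$, hence $\Psi_t^*\equiv 1$. From there one uses Lemma~\ref{lem3}(v), freeness of the hyperbolic generators, and discreteness of $\Gamma_t$ to conclude $\alpha_1=\cdots=\alpha_n=0$. Thus the invertibility of $M(t)$ comes for free from the pole structure of $P$ and the rigidity of automorphic functions, with no need for period-matrix arguments or monotonicity in $t$.
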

\begin{proof}
First note that if
\[\Psi_{t}(z)=\prod_{k=1}^{n}\left(\prod_{\phi_{t}\in\Gamma_{t}}\frac{z-\phi_{t}\circ \phi_{k,t}(c) }{z-\phi_{t}(c)}\right)^{2\delta_{k}(t)},\]
then, by Lemma \ref{lem4},
\[\Xi_{t}(\rho_{t}(z))=\Xi_{t}(z)-\sum_{k=1}^{n}\delta_{k}(t)\frac{\frac{d}{dt}\left(\chi_{k,t}(\rho_{t})\right)}{\chi_{k,t}(\rho_{t})} \text{ for any }\rho_{t}\in\Gamma_{t}\]
and, by Lemma \ref{lem3}(iii), if $\rho_{t}$ is a parabolic or elliptic M\"{o}bius transformation,
\[\sum_{k=1}^{n}\delta_{k}(t)\frac{\frac{d}{dt}\left(\chi_{k,t}(\rho_{t})\right)}{\chi_{k,t}(\rho_{t})}=0.\]
Let
\[X_{k}(t)=\left(\frac{\frac{d}{dt}\left(\chi_{k,t}(\phi_{1,t})\right)}{\chi_{k,t}(\phi_{1,t})},\ldots,\frac{\frac{d}{dt}\left(\chi_{k,t}(\phi_{n,t})\right)}{\chi_{k,t}(\phi_{n,t})}\right)\]
for $k=1,\ldots, n$. Since the value of $R_{t}(\rho_{t})$ is determined by the values of $R_{t}(\phi_{k,t})$ for $k=1,\ldots, n$, we only need to show that we can find $\delta_{1}(t),\ldots,\delta_{n}(t)\in\mathbb{R}$ such that
\[\delta_{1}(t)X_{1}(t)+\cdots+\delta_{n}(t)X_{n}(t)=(-R_{t}(\phi_{1,t}),\ldots,-R_{t}(\phi_{n,t})).\]
This follows if $X_{1}(t),\ldots X_{n}(t)$ form a basis of $\mathbb{R}^{n}$. Thus it is enough to show that $X_{1}(t),\ldots, X_{n}(t)$ are linearly independent. So for fixed $t$, suppose that we can find $\alpha_{1},\ldots,\alpha_{n}\in\mathbb{R}$ such that
\[\alpha_{1}X_{1}(t)+\cdots+\alpha_{n}X_{n}(t)=(0,\ldots,0).\]
This implies that if we define
\[\Psi_{t}^{*}(z)=\prod_{k=1}^{n}\Psi_{k,t}(z)^{\alpha_{k}},\]
then $\Xi_{t}^{*}(z)$ defined from $\Psi_{k,t}(z)$ and $\alpha_{k}$ for $k=1,\ldots,n$ using Lemma \ref{lem4} satisfies
\[\Xi_{t}^{*}(\rho_{t}(z))=\Xi_{t}^{*}(z)\]
for every $\rho_{t}\in\Gamma_{t}$ Then this fact and Lemmas \ref{lem2} and \ref{lem3}(iv) imply that $\Xi_{t}^{*}(z)$ is a simple automorphic function whose only poles are simple poles of real residue at the $\Gamma_{t}$-orbit of $\xi(t)$. Lemma \ref{lemauto}(iv) implies that the residue at each simple pole of $\Xi_{t}^{*}(z)$ must be zero i.e. $\Xi_{t}^{*}(z)$ has no poles. Since $P(z,t)$ has poles at the $\Gamma_{t}$-orbit of $\xi(t)$ by Lemma \ref{lem2}, we deduce from the formula for $\Xi_{t}^{*}(z)$,  that
\[\frac{(\Psi_{t}^{*})'(z)}{\Psi_{t}^{*}(z)}=0.\]
This implies that $\Psi_{t}^{*}(z)$ is constant. Furthermore, by considering the behaviour as $z\rightarrow\infty$ of $\Psi_{t}^{*}(z)$, we must have $\Psi_{t}^{*}(z)=1$. We will show that this implies that
\[\alpha_{1}=\cdots=\alpha_{n}=0.\]
First suppose that
 \[\alpha_{1}=\frac{a_{1}}{b_{1}},\ldots,\alpha_{n}=\frac{a_{n}}{b_{n}},\]
where $a_{k},b_{k}\in\mathbb{Z}$ for $k=1,\ldots,n$.
Then using Lemma \ref{lem3}(v), we can write $\Psi_{t}(z)$ as
\[\Psi_{t}(z)=\left[\prod_{\phi_{t}\in\Gamma_{t}}\frac{z-\phi_{t}\circ \phi_{t}^{*}(c) }{z-\phi_{t}(c)}\right]^{2b},\]
where $\phi_{t}^{*}=\phi_{t}^{a_{1}}\circ\cdots\circ\phi_{t}^{a_{n}}$ and $b=b_{1}\cdots b_{n}$. Hence
\[\prod_{\phi_{t}\in\Gamma_{t}}\frac{z-\phi_{t}\circ \phi_{t}^{*}(c) }{z-\phi_{t}(c)}=1.\]
This implies that $\phi^{*}$ is the identity and, using the fact that $\phi_{1,t},\ldots, \phi_{n,t}$ are free,  this implies that $a_{k}=0$ for $k=1,\ldots,n$. Thus
\[\alpha_{1}=\cdots=\alpha_{n}=0.\]

Now suppose that $\alpha_{1},\ldots,\alpha_{n}$ are not all rational. Then by approximating $\alpha_{1},\ldots,\alpha_{n}$ by rational numbers and following a similar argument, we deduce that we can find $a_{1,j},\ldots,a_{n,j}\in\mathbb{Z}$ such that
\[\phi_{1,t}^{a_{1,j}}\circ\cdots\circ\phi_{n,t}^{a_{n,j}}\rightarrow \mathrm{Id} \text{ locally uniformly, as } j\rightarrow\infty\]
where $\mathrm{Id}$ is the identity mapping. By discreteness, this implies that for sufficiently large $j$, we must have
\[\phi_{1,t}^{a_{1,j}}\circ\cdots\circ\phi_{n,t}^{a_{n,j}}=\mathrm{Id}.\]
As in the case where $\alpha_{1},\cdots,\alpha_{n}$ are rational, this implies that
\[\alpha_{1}=\cdots=\alpha_{n}=0.\]
\end{proof}
\begin{remark}\
  Lemma \ref{lem7} is very similar to the Lemma 3.7 in \cite{Tsai2}.
\end{remark}
This lemma allows us to construct the desired function $\Xi_{t}(z)$.
\begin{lemma}\label{lem8} Suppose that $\infty\not\in \Lambda_{t}$ for all $t\in[0,T]$ and there exists $c\in\mathbb{R}\setminus\Lambda_{t}$ for all $t\in[0,T]$.
For $t\in(0,T)$ such that $P(z,t)$ is defined, let $\delta_{1}(t),\ldots,\delta_{n}(t)$ be the solution of the system of linear equations
\[\delta_{1}(t)\frac{\frac{d}{dt}\left(\chi_{1,t}(\phi_{k,t})\right)}{\chi_{1,t}(\phi_{k,t})}+\cdots+\delta_{n}(t)\frac{\frac{d}{dt}\left(\chi_{n,t}(\phi_{k,t})\right)}{\chi_{n,t}(\phi_{k,t})}
=\sum_{\phi_{t}\in\Gamma_{t}}\frac{1}{\phi_{t}(c)-\xi(t)}-\frac{1}{\phi_{t}\circ\phi_{k,t}(c)-\xi(t)}
\]
for $k=1,\ldots,n$. Note that this system of linear equations always has unique solution. If
\[\Psi_{k,t}(z)=\prod_{\phi_{t}\in\Gamma_{t}}\left(\frac{z-\phi_{t}\circ \phi_{k,t}(c) }{z-\phi_{t}(c)}\right)^{2}\]
and we define $\Xi_{t}(z)$ as in Lemma \ref{lem4}, then $\Xi_{t}(z)-\Upsilon_{t}(z)$ is automorphic.
\end{lemma}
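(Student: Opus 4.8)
The plan is to obtain Lemma~\ref{lem8} as the special case $R_{t}=K_{t}$ of Lemma~\ref{lem7}, where $K_{t}$ is the additive functional in \eqref{8th} measuring the failure of $\Upsilon_{t}$ to be automorphic, and then to check that the two transformation laws cancel. Recall from the paragraphs preceding the lemma that, for any choice of the $\delta_{k}(t)$,
\[\Xi_{t}(\rho_{t}(z))-\Upsilon_{t}(\rho_{t}(z))=\Xi_{t}(z)-\Upsilon_{t}(z)-\bigl(J_{t}(\rho_{t})+K_{t}(\rho_{t})\bigr)\]
for every $\rho_{t}\in\Gamma_{t}$, with $J_{t},K_{t}$ as in \eqref{7th}, \eqref{8th}. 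So $\Xi_{t}-\Upsilon_{t}$ is automorphic precisely when $J_{t}+K_{t}\equiv 0$ on $\Gamma_{t}$. Since $J_{t}$ and $K_{t}$ are both homomorphisms $\Gamma_{t}\to\mathbb{R}$ (additivity was noted in Section~4) that vanish on elliptic and parabolic elements (by Lemma~\ref{lem3}(iii) and Lemma~\ref{lem5}(iii) respectively), and $\Gamma_{t}$ is generated by the hyperbolic free generators $\phi_{1,t},\dots,\phi_{n,t}$ together with such elements, it is enough to force $J_{t}(\phi_{k,t})=-K_{t}(\phi_{k,t})$ for $k=1,\dots,n$. Writing out $J_{t}(\phi_{k,t})$ and using $-K_{t}(\phi_{k,t})=\sum_{\phi_{t}\in\Gamma_{t}}\bigl(\tfrac{1}{\phi_{t}(c)-\xi(t)}-\tfrac{1}{\phi_{t}\circ\phi_{k,t}(c)-\xi(t)}\bigr)$, this is exactly the linear system displayed in the lemma.

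First I would check that $K_{t}$ is a legitimate input for Lemma~\ref{lem7}: it is real-valued because $c$, $\xi(t)$ and all the points $\phi_{t}(c)$ and $\phi_{t}\circ\phi_{k,t}(c)$ lie on $\mathbb{R}$, its series converges by Lemma~\ref{lempoin} (as in Lemma~\ref{lem5}), it is additive, and it kills torsion and parabolic elements. Feeding $R_{t}=K_{t}$ into Lemma~\ref{lem7} then produces real numbers $\delta_{1}(t),\dots,\delta_{n}(t)$ with $\delta_{1}(t)X_{1}(t)+\cdots+\delta_{n}(t)X_{n}(t)=(-K_{t}(\phi_{1,t}),\dots,-K_{t}(\phi_{n,t}))$, where $X_{j}(t)=\bigl(\tfrac{d}{dt}\log\chi_{j,t}(\phi_{1,t}),\dots,\tfrac{d}{dt}\log\chi_{j,t}(\phi_{n,t})\bigr)$; read coordinatewise this is the system in the statement. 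Its unique solvability is inherited directly from Lemma~\ref{lem7}, whose proof shows $X_{1}(t),\dots,X_{n}(t)$ are linearly independent (via Lemma~\ref{lem3}(v), the freeness of $\phi_{1,t},\dots,\phi_{n,t}$, and discreteness of $\Gamma_{t}$), so the coefficient matrix is invertible; being a real matrix it also gives a real solution.

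Finally, with these $\delta_{k}(t)$, Lemma~\ref{lem7} yields $\Xi_{t}(\rho_{t}(z))=\Xi_{t}(z)+K_{t}(\rho_{t})$, while Lemma~\ref{lem4} gives $\Xi_{t}(\rho_{t}(z))=\Xi_{t}(z)-J_{t}(\rho_{t})$; comparing, $J_{t}(\rho_{t})=-K_{t}(\rho_{t})$ for all $\rho_{t}\in\Gamma_{t}$, so $J_{t}+K_{t}\equiv 0$, and substituting into the transformation law above gives $\Xi_{t}(\rho_{t}(z))-\Upsilon_{t}(\rho_{t}(z))=\Xi_{t}(z)-\Upsilon_{t}(z)$, i.e.\ $\Xi_{t}-\Upsilon_{t}$ is automorphic. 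The genuinely hard part -- solvability and uniqueness of the linear system -- has already been dealt with inside Lemma~\ref{lem7}; at the level of Lemma~\ref{lem8} the remaining work is entirely the bookkeeping needed to see that $K_{t}$ qualifies as an $R_{t}$ and that the signs of the two transformation laws cancel. (This is what then lets us apply Lemma~\ref{lemauto}(iv) to conclude $\Xi_{t}-\Upsilon_{t}$ depends only on $t$, the step actually needed in the proof of Theorem~\ref{genslitRS}.)
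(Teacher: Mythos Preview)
Your proposal is correct and follows essentially the same approach as the paper: invoke Lemma~\ref{lem7} with $R_{t}=K_{t}$ to obtain the $\delta_{k}(t)$ (and the unique solvability of the linear system), then use the transformation laws \eqref{7th}, \eqref{8th} to see that $J_{t}+K_{t}$ vanishes on the hyperbolic generators and hence on all of $\Gamma_{t}$. The paper's proof is just a terse two-line version of exactly this; your write-up is more explicit (in particular you spell out why $K_{t}$ is an admissible input for Lemma~\ref{lem7}), but the logic is identical.
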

\begin{proof}
Lemma \ref{lem7} guarantees that this system of equations always has solution. Then note that (\ref{7th}), (\ref{8th}) implies that
\[\Xi_{t}(\phi_{k,t}(z))-\Upsilon_{t}(\phi_{k,t}(z))=\Xi_{t}(z)-\Upsilon_{t}(z)\]
for all $k=1,\ldots,n$. This implies that
\[\Xi_{t}(\rho_{t}(z))-\Upsilon_{t}(\rho_{t}(z))=\Xi_{t}(z)-\Upsilon_{t}(z)\]
for all $\rho_{t}\in\Gamma_{t}$.
\end{proof}
We will now use the preceding lemmas to prove Theorem \ref{genslitRS}.
\begin{proof}[Proof of Theorem \ref{genslitRS}]
By Lemma \ref{lem8}, we can find $\delta_{1}(t),\ldots,\delta_{n}(t)$ such that $\Theta_{t}(z)=\Xi_{t}(z)-\Upsilon_{t}(z)$ is an automorphic function. Moreover, by Lemmas \ref{lem3}(iv) and \ref{lem5}(iv), the limit of $\Theta_{t}(z)$ as $z$ tends to a fixed point of a parabolic element of $\Gamma_{t}$ exists. Hence $\Theta_{t}(z)$ is a simple automorphic function. Then note that the only possible poles of $\Theta_{t}(z)$ are simple poles at  $\phi_{t}(\xi(t))$ for each $\phi_{t}\in\Gamma_{t}$. Thus Lemma \ref{lemauto}(iv) implies that $\Theta_{t}(z)=\lambda^{*}(t)$ for some $\lambda^{*}(t)\in\mathbb{R}$ and thus $\Xi_{t}(z)=\Upsilon_{t}(z)+\lambda^{*}(t)$. This implies that
\[P(z,t)=\frac{\Psi_{t}(z)}{\Psi_{t}'(z)}\left[\sum_{k=1}^{n}2\delta_{k}(t)\frac{\dot{\Psi}_{k,t}(z)}{\Psi_{k,t}(z)} + 2\Upsilon_{t}(z)+\lambda^{*}(t)\right],\]
where
\[\Psi_{k,t}(z)=\prod_{\phi_{t}\in\Gamma_{t}} \left(\frac{z-\phi_{t}\circ\phi_{k,t}(c)}{z-\phi_{t}(c)}\right)^{2}\]
and
\[\Psi_{t}(z)=\prod_{k=1}^{n}\Psi_{k,t}(z)^{\delta_{k}(t)}.\]
Note that, as in the proof of Lemma \ref{lem3}(ii), we can write
\[\Psi_{k,t}(z)=\prod_{\phi_{t}\in\Gamma_{t}}R_{k,t}(\phi_{t})\left(\frac{\phi_{t}(z)-\phi_{k,t}(c)}{\phi_{t}(z)-c}\right)^{2},\]
where
\[R_{k,t}(\phi_{t})=\prod_{\phi_{t}\in\Gamma_{t}}\frac{(\phi_{t}^{-1})'(\phi_{k,t}(c))}{(\phi_{t}^{-1})'(c)}\]
for $k=1,\ldots,n$. This implies that
\begin{eqnarray*}
\frac{\Psi_{t}'(z)}{\Psi_{t}(z)}&=&\sum_{k=1}^{n}\sum_{\phi_{t}\in\Gamma_{t}} 2\delta_{k}(t)\left[\frac{\phi_{t}'(z)}{\phi_{t}(z)-\phi_{k,t}(c)}-\frac{\phi_{t}'(z)}{\phi_{t}(z)-c}\right] \\
\frac{\dot{\Psi}_{k,t}(z)}{\Psi_{k,t}(z)}&=&\sum_{\phi_{t}\in\Gamma_{t}}\left[\frac{\dot{\phi}_{t}(z)-\dot{\phi}_{k,t}(c)}{\phi_{t}(z)-\phi_{k,t}(c)}-\frac{\dot{\phi}_{t}(z)}{\phi_{t}(z)-c} + \frac{\frac{d}{dt}\left[R_{k,t}(\phi_{t})\right]}{R_{k,t}(\phi_{t})}\right].
\end{eqnarray*}
We write
\[P(z,t)=\frac{\lambda^{*}(t)+P_{1}^{*}(z,t)}{P_{2}(z,t)},\]
where
\[P_{1}^{*}(z,t)=\Upsilon_{t}(z)+\sum_{k=1}^{n}\delta_{k}(t)\frac{\dot{\Psi}_{k,t}(z)}{\Psi_{k,t}(z)}  \text{ and } P_{2}(z,t)=\frac{1}{2}\frac{\Psi_{t}'(z)}{\Psi_{t}(z)}.\]
Note that \[P_{2}(\rho_{t}(z),t)= \frac{1}{\rho_{t}'(z)}P_{2}(z,t)\] and also, by Lemma \ref{lem1}, \[P(\rho_{t}(z),t)=\rho_{t}'(z)P(z,t)-\dot{\rho}_{t}(z)\]
 for all $\rho_{t}\in\Gamma_{t}$. This implies that
\begin{equation}P_{1}^{*}(\rho_{t}(z),t)=P_{1}^{*}(z,t)-\dot{\rho}_{t}(z)P_{2}(z,t)\label{hemp}\end{equation}
for all $\rho_{t}\in\Gamma_{t}$. However,
\small\begin{eqnarray*}
P_{1}^{*}(z,t)&=&\sum_{\phi_{t}\in\Gamma_{t}}\left[\frac{1}{\phi_{t}(z)-\xi(t)} -\frac{1}{\phi_{t}(c)-\xi(t)} + \sum_{k=1}^{n} \delta_{k}(t)\left(\frac{\dot{\phi}_{t}(z)-\dot{\phi}_{k,t}(c)}{\phi_{t}(z)-\phi_{k,t}(c)}-\frac{\dot{\phi}_{t}(z)}{\phi_{t}(z)-c} +\frac{\frac{d}{dt}\left[R_{k,t}(\phi_{t})\right]}{R_{k,t}(\phi_{t})} \right)  \right],
\\ P_{2}(z,t)&=&
\sum_{\phi_{t}\in\Gamma_{t}}\sum_{k=1}^{n}2\delta_{k}(t)\left(\frac{\phi_{t}'(z)}{\phi_{t}(z)-\phi_{k,t}(c)}-\frac{\phi_{t}'(z)}{\phi_{t}(z)-c}\right).\end{eqnarray*}\normalsize
Substituting this into (\ref{hemp}), we deduce that we must have
\[\left[\sum_{\phi_{t}\in\Gamma_{t}}\left(\sum_{k=1}^{n}\frac{\frac{d}{dt}\left[R_{k,t}(\phi_{t})\right]}{R_{k,t}(\phi_{t})} \right)-\frac{1}{\phi_{t}(c)-\xi(t)}\right]<\infty\]
and hence
\[P_{1}(z,t)=\sum_{\phi_{t}\in\Gamma_{t}}\left[\frac{1}{\phi_{t}(z)-\xi(t)} + \sum_{k=1}^{n} \delta_{k}(t)\left(\frac{\dot{\phi}_{t}(z)-\dot{\phi}_{k,t}(c)}{\phi_{t}(z)-\phi_{k,t}(c)}-\frac{\dot{\phi}_{t}(z)}{\phi_{t}(z)-c} \right)  \right]\]
converges. Hence for some real-valued function $\lambda(t)$, \[\lambda^{*}(t)+P^{*}_{1}(z,t)=\lambda(t)+P_{1}(z,t)\] and thus
\[P(z,t)=\frac{\lambda(t)+P_{1}(z,t)}{P_{2}(z,t)}.\]
\end{proof}
\begin{remark}\
\begin{enumerate}
\item[(i)] Note that by Lemmas \ref{lem3}(v) and \ref{lem5}(v), choosing a different value for $c$ only changes the value of $\lambda(t)$.
\item[(ii)] Similarly, choosing a different lift $\tilde{\gamma}$ of $\gamma$ (and hence a different but $\Gamma_{t}$-equivalent $\xi(t)$) also changes the value of $\lambda(t)$.
\item[(iii)] Let $\{\psi_{t}\}_{t\in[0,T]}$ be a family of M\"{o}bius transformations preserving $\mathbb{H}$ such that $\psi_{t}(z)$ is differentiable for almost all $t$. By considering $\widetilde{f}_{t}=f_{t}\circ \psi_{t}$, we can see that we get the same formula for $P(z,t)$ when $\infty\in\Lambda_{t}$.
\item[(iv)] Similarly, by considering $\widetilde{f}_{t}=f_{t}\circ \psi_{t}$, we deduce that the function $\lambda(t)$ depends only on the normalization of the Loewner chain $\{f_{t}\}$ and in particular, we can choose a normalization for the functions $f_{t}$ such that  $\lambda(t)$ is any continuous real-valued function.
\item[(v)] If we reparameterize by $t\mapsto r(t)$, then $\widehat{f}_{t}=f_{r(t)}$ is a Loewner chain corresponding to $\widehat{\gamma}$ where $\widehat{\gamma}(t)=\gamma(t)$. Then
\[\dot{\widehat{f}}_{t}(z)=\widehat{f}_{t}'(z)\widehat{P}(z,t)\]
where
\[\widehat{P}(z,t)=\dot{r}(t)P(z,t).\]
Hence, in particular, we can reparameterize such that
\normalsize\[P(z,t)=\frac{\lambda(t)+\sum_{\phi_{t}\in\Gamma_{t}}\left[\frac{2\sigma_{t}}{\phi_{t}(z)-\xi(t)} + \sum_{k=1}^{n} \delta_{k}(t)\left(\frac{\dot{\phi}_{t}(z)-\dot{\phi}_{k,t}(c)}{\phi_{t}(z)-\phi_{k,t}(c)}-\frac{\dot{\phi}_{t}(z)}{\phi_{t}(z)-c} \right)  \right]}{\sum_{\phi_{t}\in\Gamma_{t}}\sum_{k=1}^{n}\delta_{k}(t)\left(\frac{\phi_{t}'(z)}{\phi_{t}(z)-\phi_{k,t}(c)}-\frac{\phi_{t}'(z)}{\phi_{t}(z)-c}\right)}\]\normalsize
where \[\sigma_{t}=-\sum_{\phi_{t}\in\Gamma_{t}}\sum_{k=1}^{n}\delta_{k}(t)\left(\frac{\phi_{t}'(\xi(t))}{\phi_{t}(\xi(t))-\phi_{k,t}(c)}-\frac{\phi_{t}'(\xi(t))}{\phi_{t}(\xi(t))-c}\right).\]
In particular, the residue at $z=\xi(t)$ of $P(z,t)$ is $-2$.
\item[(vi)] We can also apply Theorem \ref{genslitRS} to the case where $\gamma$ is not a simple curve: in this case, we suppose that the functions $f_{t}$ map $\mathbb{H}$ conformally onto the unique $\Gamma$-invariant connected component of $\mathbb{H}\setminus\pi^{-1}(\gamma(0,t])$. Note that $\xi(t)$ is then, in general, not continuous. On the other hand if $\gamma$ is non-crossing, then $\xi(t)$ is still continuous.
\end{enumerate}
\end{remark}
\section{A special case}
We will consider the special case where $\Gamma=\langle z\mapsto e^{\tau_{0}}z\rangle$ for some $\tau_{0}>0$. Note that $R=\mathbb{H} /\Gamma$ is
conformally equivalent to the annulus \[\mathbb{A}_{2\pi^{2}/\tau_{0}}=\{z: 1 < |z| < e^{2\pi^{2}/\tau_{0}}\}\] via the conformal map $z\mapsto \exp(-i 2\pi\log(z)/\tau_{0})$.

Now let $\gamma:[0,T]\rightarrow R\cup \partial R$ be a simple curve such that $\gamma(0,T]\subset R$ and $\gamma(0)\in\partial R$. We define $\widetilde{\gamma}$ to be a lift of $\gamma$ under the quotient map that starts from a point on $\mathbb{R}$. Let $f_{t}$ be a conformal map of $\mathbb{H}$ onto \[H_{t}=\mathbb{H}\setminus \bigcup_{k=-\infty}^{\infty}\{e^{k\tau_{0}s}:s\in\widetilde{\gamma}(0,t]\}.\]
It turns out that if we normalize $f_{t}$ such that $f_{t}(0)=0$ and $f_{t}(\infty)=\infty$, then we are able to determine the Fuchsian groups $\Gamma_{t}$ beforehand: the mapping $z\mapsto f_{t}^{-1}(e^{\tau_{0}(t)}f_{t}(z))$ generates $\Gamma_{t}$ and is a M\"{o}bius transformation preserving $\mathbb{H}$ that fixes $0$ and $\infty$; this implies that \[f_{t}^{-1}(e^{\tau_{0}(t)}f_{t}(z))=e^{\tau(t)z}\]
for some $\tau(t)\in\mathbb{R}$ and hence
\[\Gamma_{t}=\langle z\mapsto e^{\tau(t)}z\rangle.\]
Thus $R\setminus \gamma(0,t]$ is conformally equivalent to $\mathbb{H}/\Gamma_{t}$. Moreover, since the modulus of $R\setminus \gamma(0,t]$ is strictly decreasing, this implies that $\tau(t)$ is strictly increasing. Also, it is easy to see that with this normalization, $f_{t}(z)$ is differentiable with respect to $t$ in $(0,T)$. Let $\xi(t)=f_{t}^{-1}(\widetilde{\gamma}(t))$. We want to find
\[P(z,t)=\frac{\dot{f}_{t}(z)}{f_{t}'(z)}.\]

Although in this case, we have $\infty\in\Lambda_{t}$ for all $t\in[0,T]$, we can still apply the methods in  Lemma \ref{lem4} and the proof of Theorem \ref{genslitRS} with the function
\[\Psi_{t}(z)=\left[\prod_{k=-\infty}^{\infty}a_{k}(t)\frac{z-e^{(k+1)\tau(t)}c}{z-e^{k\tau(t)}c}\right]^{\delta(t)},\]
where
\[a_{k}(t)=\left\{\begin{array}{ll} e^{-\tau(t)} & \text{ for } k>0, \\ 1 & \text{ for } k\leq  0,\end{array} \right.\]
and $c\not\in\mathbb{R}\setminus\{0\}$. We note that
\begin{eqnarray*}\Psi_{t}(z)&=&\lim_{N\rightarrow\infty} \left[\prod_{k=-N}^{N}a_{k}(t)\frac{z-e^{(k+1)\tau(t)}c}{z-e^{k\tau(t)}c}\right]^{\delta(t)}
\\ &=&\left[\lim_{N\rightarrow\infty} e^{-(N+1)\tau(t)}\frac{z-e^{(N+1)\tau(t)}c}{z-e^{-N\tau(t)}c}\right]^{\delta(t)}
\\ &=&\left(\frac{c}{z}\right)^{\delta(t)}.
\end{eqnarray*}
Hence
\[\Psi_{t}(e^{\tau(t)}z)=e^{-\delta(t)\tau(t)}\Psi_{t}(z).\]
We define
\[\Xi_{t}(z)=-\frac{1}{2}\frac{\delta(t)}{z}P(z,t).\]
Then $\Xi_{t}(z)$ satisfies
\[\Xi_{t}(e^{\tau(t)}z)=\Xi_{t}(z)+\delta(t)\dot{\tau}(t).\]
Let
\[\Upsilon_{t}(z)=\sum_{k=-\infty}^{\infty} \frac{1}{e^{k\tau(t)}z-\xi(t)}-\frac{1}{e^{k\tau(t)}c-\xi(t)}\]
for some $c\in\mathbb{R}$.
Then $\Upsilon_{t}(z)$ satisfies
\[\Upsilon_{t}(e^{\tau(t)}z)=\Upsilon_{t}(z)+  \sum_{k=-\infty}^{\infty} \frac{1}{e^{(k+1)\tau(t)}c-\xi(t)}-\frac{1}{e^{k\tau(t)}c-\xi(t)}.\]
Note that
\begin{eqnarray*}
\sum_{k=-\infty}^{\infty} \frac{1}{e^{(k+1)\tau(t)}c-\xi(t)}-\frac{1}{e^{k\tau(t)}c-\xi(t)}&=& \lim_{N\rightarrow\infty}\sum_{k=-N}^{N} \frac{1}{e^{(k+1)\tau(t)}c-\xi(t)}-\frac{1}{e^{k\tau(t)}c-\xi(t)} \\
&=& \lim_{N\rightarrow\infty}\left[\frac{1}{e^{(N+1)\tau(t)}c-\xi(t)}-\frac{1}{e^{-N\tau(t)}c-\xi(t)}\right]\\
&=&  \frac{1}{\xi(t)}.
\end{eqnarray*}
Thus
\[\Upsilon_{t}(e^{\tau(t)}z)=\Upsilon_{t}(z)+  \frac{1}{\xi(t)}.\]
We let $\delta(t)=\frac{1}{\xi(t)\dot{\tau}(t)}$. Then, as in the proof of Theorem \ref{genslitRS}, $\Theta(t)=\Xi_{t}(z)-\Upsilon_{t}(z)$ is a simple automorphic function and hence does not depend on $z$. Thus $\Theta_{t}(z)=\lambda^{*}(t)$ and $\lambda^{*}(t)$ must be real. Hence
\begin{eqnarray*}&&\Xi_{t}(z)-\Upsilon_{t}(z)=\lambda^{*}(t)
\\\Rightarrow&& P(z,t)=-2z\dot{\tau}(t)\xi(t)\left[\lambda^{*}(t)+\Upsilon_{t}(z)\right].
\end{eqnarray*}
Thus we have proved the following theorem
\begin{theorem}\label{annulus}
For almost all $t\in(0,T)$, $f_{t}(z)$ satisfies
\[\dot{f}_{t}(z)=f_{t}'(z)P(z,t)\]
where
\[P(z,t)=\lambda(t)z-\sum_{k\in\mathbb{Z}} \left(\frac{\dot{\tau}(t)\xi(t)z}{e^{k\tau(t)}z-\xi(t)}+\frac{\dot{\tau}(t)\xi(t)z}{e^{k\tau(t)}c-\xi(t)} \right)\]
for some $\lambda(t)\in\mathbb{R}$ and $c\in\mathbb{R}\setminus\{0\}$.
\end{theorem}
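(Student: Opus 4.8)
The plan is to repeat the argument behind Theorem~\ref{genslitRS} in the cyclic case; the one genuinely new feature is that here the limit set is $\Lambda_t=\{0,\infty\}$, so $\infty\in\Lambda_t$ and the infinite products and series of Lemmas~\ref{lem3}--\ref{lem5} have to be replaced by regularized, telescoping versions. First I would fix the normalization $f_t(0)=0$, $f_t(\infty)=\infty$. Then the conformal automorphism $f_t^{-1}\circ(z\mapsto e^{\tau_0}z)\circ f_t$ of $\mathbb{H}$ fixes $0$ and $\infty$, hence equals $z\mapsto e^{\tau(t)}z$ for some $\tau(t)>0$, so $\Gamma_t=\langle z\mapsto e^{\tau(t)}z\rangle$ and $\Lambda_t=\{0,\infty\}$; since the modulus of $R\setminus\gamma(0,t]\cong\mathbb{H}/\Gamma_t$ strictly decreases, $\tau(t)$ is strictly increasing, hence differentiable a.e., and (as in Lemma~\ref{lem0}) so is $f(z,t)$, so $P(z,t)=\dot f_t(z)/f_t'(z)$ is defined a.e. As in Lemma~\ref{lem2}, $P(\cdot,t)$ is meromorphic on $\widehat{\mathbb{C}}\setminus\{0,\infty\}$ with only simple poles of negative residue at the points $e^{k\tau(t)}\xi(t)$, $k\in\mathbb{Z}$, where $\xi(t)=f_t^{-1}(\widetilde\gamma(t))$, and Lemma~\ref{lem1} specializes to the transformation law $P(e^{\tau(t)}z,t)=e^{\tau(t)}P(z,t)-\dot\tau(t)e^{\tau(t)}z$.

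Next I would construct two auxiliary functions. In place of the character-automorphic product of Lemma~\ref{lem3} I would take $\Psi_t(z)=\big[\prod_{k\in\mathbb{Z}}a_k(t)\tfrac{z-e^{(k+1)\tau(t)}c}{z-e^{k\tau(t)}c}\big]^{\delta(t)}$, with the prefactors $a_k(t)$ ($e^{-\tau(t)}$ for $k>0$ and $1$ otherwise) arranged so that the partial products telescope; the limit is $(c/z)^{\delta(t)}$ up to a $z$-independent constant, giving $\Psi_t(e^{\tau(t)}z)=e^{-\delta(t)\tau(t)}\Psi_t(z)$. Running the computation of Lemma~\ref{lem4}, the combination $\Xi_t(z)=-\tfrac12\tfrac{\delta(t)}{z}P(z,t)$ (note $\Psi_t'/\Psi_t=-\delta(t)/z$, and there is no $z$-dependent correction term) then satisfies $\Xi_t(e^{\tau(t)}z)=\Xi_t(z)+\delta(t)\dot\tau(t)$. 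In place of Lemma~\ref{lem5} I would take the series $\Upsilon_t(z)=\sum_{k\in\mathbb{Z}}\big(\tfrac{1}{e^{k\tau(t)}z-\xi(t)}-\tfrac{1}{e^{k\tau(t)}c-\xi(t)}\big)$: its summand is $O(e^{-|k|\tau(t)})$, so the series converges locally uniformly on $\widehat{\mathbb{C}}\setminus\{0,\infty\}$, its only poles are the required simple poles of real residue at $e^{k\tau(t)}\xi(t)$, and a telescoping computation gives $\Upsilon_t(e^{\tau(t)}z)=\Upsilon_t(z)+1/\xi(t)$.

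Finally I would match the periods. Choosing $\delta(t)=1/(\xi(t)\dot\tau(t))$ makes $\Theta_t(z):=\Xi_t(z)-\Upsilon_t(z)$ invariant under $\Gamma_t$. Since $\Gamma_t$ has no parabolic elements the condition on parabolic fixed points is vacuous, and $\Theta_t$ has at most one pole per fundamental annulus (the would-be pole at $\xi(t)$), so $\Theta_t$ is a simple automorphic function and Lemma~\ref{lemauto}(iv) forces it to be constant, say $\lambda^*(t)$, which one checks is real (restrict to a boundary interval on which $f_t$ extends real-analytically). Solving $\Xi_t=\Upsilon_t+\lambda^*(t)$ for $P$ gives $P(z,t)=-2z\dot\tau(t)\xi(t)\big[\lambda^*(t)+\Upsilon_t(z)\big]$, and expanding $\Upsilon_t$ and, after a reparametrization $t\mapsto r(t)$ as in Remark~(v) of \S4 that rescales $P$, renaming the resulting real coefficient as $\lambda(t)$, yields the stated formula.

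I expect the only real obstacle to be the second step: because $\infty\in\Lambda_t$ the naive products and series of \S3--\S4 diverge, and one has to guess the correct telescoping regularizations $\Psi_t$ and $\Upsilon_t$ and check that they retain the pole structure and transformation laws needed to apply Lemma~\ref{lemauto}(iv). The convergence estimates, the check that the pole of $\Theta_t$ actually cancels, and the algebra turning $-2z\dot\tau\xi[\lambda^*+\Upsilon_t]$ into the displayed sum are routine.
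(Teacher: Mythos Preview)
Your proposal is essentially identical to the paper's own argument in \S5: the same normalization $f_t(0)=0$, $f_t(\infty)=\infty$ giving $\Gamma_t=\langle z\mapsto e^{\tau(t)}z\rangle$; the same regularized product $\Psi_t$ (with the same prefactors $a_k(t)$) that telescopes to $(c/z)^{\delta(t)}$; the same $\Xi_t(z)=-\tfrac{\delta(t)}{2z}P(z,t)$ and $\Upsilon_t$; the same period-matching choice $\delta(t)=1/(\xi(t)\dot\tau(t))$; and the same appeal to Lemma~\ref{lemauto}(iv) to conclude $\Xi_t-\Upsilon_t$ is constant.

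The only place you deviate is the final line, where you invoke a reparametrization from Remark~(v) of \S4 to pass from $P(z,t)=-2z\dot\tau(t)\xi(t)[\lambda^*(t)+\Upsilon_t(z)]$ to the displayed formula. The paper does not do this---it simply expands and declares the theorem proved. Your reparametrization is in fact ineffective here: under $t\mapsto r(t)$ one has $\widehat P=\dot r\,P$ but also $\dot{\widehat\tau}=\dot r\,\dot\tau$, so the coefficient in front of each summand is unchanged. What actually happens is that the $c$-term $\sum_k \dot\tau\xi z/(e^{k\tau}c-\xi)$ is linear in $z$ and is absorbed into $\lambda(t)z$; no reparametrization is involved, and you should drop that sentence.
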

The difference between this case and the general case is that up to reparameterization, $\Gamma_{t}$, does not depend on the curve $\gamma$; this is because, up to conformal equivalence, the set of doubly-connected domains forms a one parameter family of domains. For general Riemann surfaces, the family of Fuchsian groups $\{\Gamma_{t}\}$ depends on the curve $\gamma$.
\section{The solution to the Loewner differential equation}
Firstly, recall that if $f_{t}$ satisfies
\[\dot{f}_{t}(z)=f_{t}'(z)P(z,t),\]
then the inverse function $g_{t}=f_{t}^{-1}$ satisfies
\[\dot{g}_{t}(z)=-P(g_{t}(z),t).\]
In this section, we will prove that given a finitely-generated Fuchsian group $\Gamma$ of the second kind with no parabolic or elliptic elements and $P(z,t)$ as in Theorem \ref{genslitRS}, the differential equation
\begin{equation}\dot{g}_{t}(z)=-P(g_{t}(z),t)\text{
with initial condition } g_{0}(z)=z\label{LDE}\end{equation}
 has solution $g_{t}(z)$ and each function $g_{t}$ is a conformal bijection of some domain $H_{t}\subset\mathbb{H}$ onto $\mathbb{H}$. Moreover $H_{t}$ satisfies
\[\phi(H_{t})=H_{t} \text{ for all } \phi\in\Gamma\]
i.e. $H_{t}$ is $\Gamma$-invariant. Let $\pi:\mathbb{H}\rightarrow \mathbb{H}/\Gamma$ be the quotient map. Then this implies that $R_{t}=\pi(H_{t})$ is a family of sub-Riemann surfaces of $R=\mathbb{H}\setminus\Gamma$ and also, $K_{t}=\pi(\mathbb{H}\setminus H_{t})$ is a continuously growing family of hulls in $R$. We remark that for fixed $z$, (\ref{LDE}) is an ordinary differential equation and so we are guaranteed a solution up to a certain time $T_{z}$.

We will first consider the case when $\Gamma=\left\langle z\mapsto e^{\tau_{0}}z\right\rangle$ for some $\tau_{0}>0$. Then $P(z,t)$ is given by Theorem \ref{annulus}.
\begin{theorem}\label{converseannulus}
Suppose that $\xi,\lambda\in C([0,T])$ and $\tau\in C^{1}([0,T])$ such that $\xi(t),\tau(t),\dot{\tau}(t)>0$ for all $t\in[0,T]$ and suppose that $c\in\mathbb{R}\setminus\{0\}$.
For fixed $z\in\mathbb{H}$, let $g_{t}(z)$ be the solution to the differential equation
\[\dot{g}_{t}(z)=-\lambda(t)g_{t}(z)+\sum_{k\in\mathbb{Z}} \left(\frac{\dot{\tau}(t)\xi(t)g_{t}(z)}{e^{k\tau(t)}g_{t}(z)-\xi(t)}+\frac{\dot{\tau}(t)\xi(t)g_{t}(z)}{e^{k\tau(t)}c-\xi(t)} \right)\]
with initial condition $g_{0}(z)\equiv z$. Let $T_{z}>0$ be the supremum of all $t$ such that the solution $g_{t}(z)$ is well-defined up to time $t$ with $g_{t}(z)\in\mathbb{H}$. Then the function $g_{t}$ is a conformal mapping of $H_{t}=\{z:T_{z}>t\}$ onto $\mathbb{H}$ such that
for all $k\in\mathbb{Z}$,
\[g_{t}(e^{k\tau(0)}g_{t}^{-1}(z))=e^{k\tau(t)}z.\]
Also, $H_{t}$ is $\left\langle z\mapsto e^{\tau_{0}}z\right\rangle$-invariant.
\end{theorem}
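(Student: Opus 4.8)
The plan is to run the classical converse-Loewner argument — solve the ODE from each initial point, let $H_{t}$ be the set of points whose solution survives past time $t$, then show $g_{t}$ is holomorphic, injective, and onto $\mathbb{H}$ — adapted to the present setting where the vector field $-P(\cdot,t)$ lives on $\mathbb{H}$ and has infinitely many poles on $\mathbb{R}$ (the $\langle z\mapsto e^{\tau(t)}z\rangle$-orbit of $\xi(t)$). Write $P(z,t)$ for minus the right-hand side of the displayed equation. First I would collect the analytic facts that everything rests on. By Theorem \ref{annulus}, for each fixed $t$ the function $P(\cdot,t)$ is holomorphic on $\mathbb{H}$: its only real singularities are simple poles at $p_{k}(t)=e^{-k\tau(t)}\xi(t)$, $k\in\mathbb{Z}$ (the fixed points $0,\infty$ of $\Gamma$ are removable), and a direct computation gives that the residue of $-P(\cdot,t)$ at $p_{k}(t)$ equals $\dot\tau(t)\xi(t)^{2}e^{-2k\tau(t)}>0$, where $\dot\tau>0$ is used. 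Since all coefficients are real, $P(\overline z,t)=\overline{P(z,t)}$, so $P$ carries $\mathbb{R}$ minus its poles into $\mathbb{R}$. As $\xi,\lambda\in C([0,T])$ and $\tau\in C^{1}([0,T])$ with $\xi,\dot\tau$ bounded away from $0$ on the compact interval $[0,T]$, $P$ is jointly continuous in $(z,t)$, locally Lipschitz in $z$ uniformly in $t$, and grows slowly enough as $z\to\infty$ that no solution explodes in finite time. Standard ODE theory then gives, for each $z\in\mathbb{H}$, a unique maximal solution $t\mapsto g_{t}(z)$ on $[0,T_{z})$ staying in $\mathbb{H}$, depending holomorphically on $z$; hence $H_{t}=\{z:T_{z}>t\}$ is open, $H_{0}=\mathbb{H}$, the family $H_{t}$ decreases in $t$, and $g_{t}$ is holomorphic on $H_{t}$.

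Injectivity of $g_{t}$ on $H_{t}$ is the usual backward-uniqueness argument: if $g_{t}(z_{1})=g_{t}(z_{2})=w\in\mathbb{H}$, the curves $s\mapsto g_{s}(z_{i})$, $s\in[0,t]$, solve the same equation, stay in $\mathbb{H}$ where $P$ is Lipschitz in $z$, and agree at $s=t$, hence agree at $s=0$, so $z_{1}=z_{2}$. Being holomorphic and injective, $g_{t}$ is a conformal map of $H_{t}$ onto the open set $g_{t}(H_{t})\subseteq\mathbb{H}$.

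The scaling covariance and the $\Gamma$-invariance of $H_{t}$ I would obtain next, from Lemma \ref{lem1}. Because the group $\Gamma_{t}=\langle z\mapsto e^{\tau(t)}z\rangle$ has been prescribed in advance, $P$ obeys the transformation law $P(e^{\tau(t)}w,t)=e^{\tau(t)}P(w,t)-\dot\tau(t)e^{\tau(t)}w$ — the specialization of Lemma \ref{lem1} to $\phi_{t}(z)=e^{\tau(t)}z$, which may also be checked directly from the series using $\Upsilon_{t}(e^{\tau(t)}z)=\Upsilon_{t}(z)+\xi(t)^{-1}$. Consequently, for fixed $z\in H_{t}$, the curve $h_{s}:=e^{\tau(s)}g_{s}(z)$ satisfies $\dot h_{s}=-P(h_{s},s)$ with $h_{0}=e^{\tau(0)}z$; by uniqueness $h_{s}=g_{s}(e^{\tau(0)}z)$ on the common interval of existence in $\mathbb{H}$, and since $e^{\tau(s)}g_{s}(z)\in\mathbb{H}$ exactly when $g_{s}(z)\in\mathbb{H}$, this shows at once that $T_{e^{\tau(0)}z}=T_{z}$ — so $e^{\tau(0)}H_{t}=H_{t}$, i.e. $H_{t}$ is $\langle z\mapsto e^{\tau(0)}z\rangle$-invariant — and that $g_{t}(e^{\tau(0)}z)=e^{\tau(t)}g_{t}(z)$ on $H_{t}$. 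Iterating gives $g_{t}(e^{k\tau(0)}z)=e^{k\tau(t)}g_{t}(z)$ for every $k\in\mathbb{Z}$, which is the asserted identity once $z$ is replaced by $g_{t}^{-1}(z)$ — available as soon as surjectivity is established.

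The main obstacle is surjectivity, $g_{t}(H_{t})=\mathbb{H}$, and I would prove it by solving the equation backwards. Fix $w\in\mathbb{H}$ and $t\in(0,T]$, and let $\tilde g_{s}$ be the maximal solution of $\frac{d}{ds}\tilde g_{s}=-P(\tilde g_{s},s)$ with terminal value $\tilde g_{t}=w$, defined and $\mathbb{H}$-valued on a maximal interval $(\sigma,t]$. If $\sigma<0$ then $z:=\tilde g_{0}\in\mathbb{H}$ has $g_{s}(z)=\tilde g_{s}\in\mathbb{H}$ for all $s\in[0,t]$ (uniqueness) and $g_{t}(z)=w$; as $w$ is interior, $T_{z}>t$, so $z\in H_{t}$ and $g_{t}(z)=w$. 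Thus it suffices to exclude $\sigma\ge0$, in which case $\tilde g_{s}$ leaves every compact subset of $\mathbb{H}$ as $s\downarrow\sigma$, and this can happen in only three ways, each of which I would rule out. It cannot run off to $\infty$: the growth bound on $P$ (Gr\"onwall) forbids this in finite backward time. It cannot converge to a real point $x_{0}$ that is not a pole (including $x_{0}=0$): there $P(\cdot,\sigma)$ is finite and real, so $\tilde g$ extends $C^{1}$ to $s=\sigma$ with $\tilde g_{\sigma}=x_{0}\in\mathbb{R}$, but the solution through a real ordinary point is real for later times (reality of $P$ on $\mathbb{R}$ plus uniqueness), contradicting $\tilde g_{s}\in\mathbb{H}$ for $s>\sigma$. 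The remaining — and genuinely delicate — case is $\tilde g_{s}\to p_{k}(s)$ for some moving pole, and here I would exploit the positive residue: writing $\zeta=\tilde g_{s}-p_{k}(s)$ and $\eta=\zeta^{2}$, near the pole $\frac{d}{ds}\zeta=\frac{r_{k}(s)}{\zeta}+O(1)$ with $r_{k}(s)=\dot\tau(s)\xi(s)^{2}e^{-2k\tau(s)}>0$, so $\frac{d}{ds}\eta=2r_{k}(s)+O(|\eta|^{1/2})$ uniformly in $s\in[0,T]$; hence in backward time $\mathrm{Re}(\eta)$ strictly decreases while $|\eta|$ stays small, which — starting from $|\eta|=\varepsilon^{2}$ at the last time $\tilde g$ is within $\varepsilon$ of the pole — pushes $|\eta|$ back above $\varepsilon^{2}$ after a bounded amount of backward time, contradicting $\tilde g_{s}\to p_{k}(s)$. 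Making the $O(|\eta|^{1/2})$ term rigorous — comparing $\tilde g$ with the exact flow of $z\mapsto r_{k}(s)/(z-p_{k}(s))$ on a small disc and controlling $\dot p_{k}$, bounded on $[0,T]$ — is the technical heart of the proof; the accumulation of the poles at $0$ and $\infty$ needs a separate, easier look via the same growth and fixed-point considerations. With all three cases excluded, $\sigma<0$, so $g_{t}:H_{t}\to\mathbb{H}$ is a conformal bijection, the covariance relation follows as above, and continuity of $t\mapsto H_{t}$ comes from continuous dependence of the ODE on $t$, exactly as in classical Loewner theory.
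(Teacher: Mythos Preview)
Your proposal follows the same overall strategy as the paper: solve the ODE forward, establish that $g_t$ is holomorphic and injective on $H_t$, prove surjectivity by running the flow backward from an arbitrary $w\in\mathbb{H}$, and derive the covariance identity from the transformation law $P(e^{\tau(t)}z,t)=e^{\tau(t)}P(z,t)-\dot\tau(t)e^{\tau(t)}z$ together with uniqueness of ODE solutions. The structure and logic match.

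Two points of execution differ. For injectivity and continuity, the paper writes $\Delta_t(z,w)=g_t(z)-g_t(w)$, observes $\dot\Delta_t=-\Delta_t\Psi_t(g_t(z),g_t(w))$ for a suitable $\Psi_t$, and integrates to the closed form $\Delta_t=(z-w)\exp\bigl[-\int_0^t\Psi_s\,ds\bigr]$; this gives continuity and injectivity in one stroke. Your backward-uniqueness argument is equally valid. More notably, for the pole-repulsion step in the surjectivity argument the paper takes a shorter route than your $\eta=\zeta^2$ analysis: it simply computes that near a pole the imaginary part of the backward flow satisfies
\[
\mathrm{Im}[\dot h_t(w)]\asymp\frac{\dot\tau(t)\xi(t)^2\,\mathrm{Im}[h_t(w)]}{|h_t(w)-\xi(t_0-t)|^2}>0,
\]
so $\mathrm{Im}[h_t(w)]$ is monotone and bounded away from zero as long as $h_t(w)$ stays near the (real) pole, which immediately rules out $h_t(w)\to p_k$. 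This uses only that the residue is negative real together with $P(\bar z,t)=\overline{P(z,t)}$, and it bypasses what you correctly identify as the ``technical heart'' of your argument. The paper handles the accumulation at $0$ by the same device (the poles $e^{k\tau}\xi$ accumulate at $0$, so $h_t$ cannot approach $0$ either), and shows $h_t$ stays in $\mathbb{H}$ by noting $\mathrm{Im}[P(\zeta,t)]=0$ on $\mathbb{R}$ away from the poles. Your case analysis reaches the same conclusions, but the imaginary-part monotonicity is the cleaner tool here.
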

\begin{proof}
For $\zeta,\omega\in\mathbb{H}$, let $\Phi_{t}(\zeta,\omega)=P(\zeta,t)-P(\omega,t)$ where
\[P(z,t)=\lambda(t)z-\sum_{k\in\mathbb{Z}} \left(\frac{\dot{\tau}(t)\xi(t)z}{e^{k\tau(t)}z-\xi(t)}+\frac{\dot{\tau}(t)\xi(t)z}{e^{k\tau(t)}c-\xi(t)} \right).\]
Then for fixed $\omega\in\mathbb{H}$, $\Phi_{t}(\zeta,\omega)$ vanishes when $\zeta=\omega$ so we can write
\[\Phi_{t}(\zeta,\omega)=(\zeta-\omega)\Psi_{t}(\zeta,\omega),\]
where $\Psi_{t}(\omega,\omega)\neq 0$.

Let $\Delta_{t}(z,w)=g_{t}(z)-g_{t}(w)$ for $z,w\in H_{t}$. Then
\[\dot{\Delta}_{t}(z,w)=-\Delta_{t}(z,w)\Psi_{t}(g_{t}(z),g_{t}(w)).\]
Since $\Delta_{0}(z,w)=z-w$, this differential equation has solution
\[\Delta_{t}(z,w)=(z-w)\exp\left[-\int_{0}^{t}\Psi_{t}(g_{t}(z),g_{t}(w))dt\right].\]
This implies that $g_{t}(z)$ is continuous with respect to $z$. Then the fact that $P(z,t)$ does not have any poles in $\mathbb{H}$ implies that $g_{t}(z)$ is one-to-one, and analytic. Also, by definition, $g_{t}(H_{t})\subset \mathbb{H}$. We need to show that $g_{t}(H_{t})= \mathbb{H}$.

Fix $t_{0}\in[0,T]$ and take any $w\in \mathbb{H}$, we will find $w'\in H_{t_{0}}$ such that $g_{t_{0}}(w')=w$. To do this, we start with the point $w$ and run the inverse flow of the differential equation starting from $t_{0}$ i.e.
\[\dot{h}_{t}(w)=P(h_{t}(w),t_{0}-t)\]
with initial condition $h_{0}(w)\equiv w$. We need to show that a solution exists for all $0\leq t\leq t_{0}$ and that the solution remains within the upper half-plane. Note that the only way a solution ceases to exist is if, for some $t\in(0,t_{0})$, $h_{t}(w)=e^{k\tau(t_{0}-t)}\xi(t_{0}-t)$ for some $k\in\mathbb{Z}$ or $h_{t}(w)=0$. However, if  $h_{t}(w)$ is close to $\xi(t_{0}-t)$, then
\[\mathrm{Im}[\dot{h}_{t}(w)]\asymp\frac{\dot{\tau}(t)\xi(t)^2\mathrm{Im}[h_{t}(w)]}{|h_{t}(w)-\xi(t_{0}-t)|^{2}}>0.\]
Thus $h_{t}(w)$ cannot approach $\xi(t_{0}-t)$. Similarly for the points $e^{k\tau(t_{0}-t)}\xi(t_{0}-t)$. This also implies that $h_{t}(w)$ cannot approach $0$ since the points $e^{k\tau(t_{0}-t)}\xi(t_{0}-t)$ accumulate at $0$. Hence the solution exists for all $t\in[0,t_{0}]$.  Also, for any $t\in[0,t_{0}]$,
\[\mathrm{Im}[\dot{h}_{t}(\zeta)]=\mathrm{Im}[P(\zeta,t)]=0\]
for all $\zeta\in\mathbb{R}\setminus\{0\}$ with $\zeta\neq e^{k\tau(t)}\xi(t)$ for all $k\in\mathbb{Z}$. Thus by connectivity this implies that $h_{t}(w)\in\mathbb{H}$ for all $t\in[0,t_{0}]$.
Hence a solution $h_{t}(w)$ to the inverse flow starting at $w$ exists up to time $t_{0}$ and remains in $\mathbb{H}$. Let $w'=h_{t_{0}}(w)$. Then note that $h_{t_{0}-t}(w)$ is a solution to
\[\dot{g}_{t}(w')=-P(g_{t}(z),t)\]
with initial condition $g_{0}(w')=w'$. Hence by uniqueness of solution of ordinary differential equations, we must have $g_{t}(w')=h_{t_{0}-t}(w)$ and in particular $g_{t_{0}}(w')=h_{0}(w)=w$.

It only remains to show that
\[g_{t}(e^{k\tau(0)}g_{t}^{-1}(z))=e^{k\tau(t)}z.\]
Firstly, $P(z,t)$ satisfies, for all $k\in\mathbb{Z}$,
\[P(e^{k\tau(t)}z,t)=e^{k\tau(t)}P(z,t)-k\dot{\tau}(t)e^{k\tau(t)}z.\]
This implies that if we let $\widetilde{g}_{t}(z)=e^{k\tau(t)}g_{t}(z)$ (for some $k\in\mathbb{Z}$), then
\[\dot{\widetilde{g}}_{t}(z)=-P(\widetilde{g}_{t}(z),t)\]
with $\widetilde{g}_{0}(z)=e^{k\tau(0)}z$. By the uniqueness of solution of ordinary differential equations, we have
\[\widetilde{g}_{t}(z)=g_{t}(e^{k\tau(0)}z\]
and this implies that
\[g_{t}(e^{k\tau(0)}g_{t}^{-1}(z))=e^{k\tau(t)}z.\]
This also implies that $H_{t}$ is $\left\langle z\mapsto e^{\tau_{0}}z\right\rangle$-invariant.
\end{proof}
When $\Gamma$ is a non-elementary Fuchsian group (i.e. the limit set contains infinitely many points), the situation is more complicated. The reason is that in the previous case, $\Gamma_{t}$ is does not depend on $\xi(t)$ and $\lambda(t)$ (up to reparameterization); this is not true for non-elementary $\Gamma$. This corresponds to the fact that the Teichm\"{u}ller space of an annulus is one-dimensional (see \cite[Chapter VI]{LecturesQC} for more details on Teichm\"{u}ller spaces).

Thus if we want the solution of a differential equation to be a Loewner chain compatible with $\mathbb{H}/\Gamma$ for a finitely-generated non-elementary Fuchsian group of the second kind $\Gamma$ with no parabolic or elliptic elements, we need to determine \emph{beforehand} the family $(\Gamma_{t})_{t\in[a,b]}$ of Fuchsian groups that we want the Loewner chain to correspond to. This is akin to finding the path in Teichm\"{u}ller space that corresponds to the evolution described by the differential equation.

Recall that any M\"{o}bius transformation that preserves $\mathbb{H}$ is specified by 3 real parameters (see e.g. \cite[p. 77]{MR1393195}). For example, if we take 2 ordered triples of distinct points in $\mathbb{R}$, $(p_{1},p_{2},p_{3})$ and $(q_{1},q_{2},q_{3})$, then there is a unique M\"{o}bius transformation $\phi$ satisfying
\[\phi(p_{j})=q_{j}\]
for $j=1,2,3$ and $\phi(\mathbb{H})=\mathbb{H}$. Moreover, the formula for $\phi(z)$ can be determined using cross-ratios (see \cite[Section 4.4]{MR1393195}), namely
\begin{equation}\frac{(q_{1}-q_{3})(q_{2}-\phi(z))}{(q_{1}-q_{2})(q_{3}-\phi(z))}= \frac{(p_{1}-p_{3})(p_{2}-z)}{(p_{1}-p_{2})(p_{3}-z)}\label{heq}.\end{equation}
The idea is to pick 3 distinct points $p_{1},p_{2},p_{3}$ in $\mathbb{R}$ and study their evolution as well as the evolution of their images under elements of $\Gamma$ given by $P(z,t)$. This will allow us to determine the appropriate family of Fuchsian groups $\{\Gamma_{t}\}$ which we can then use with the Loewner differential equation.

Suppose that $\Gamma=\langle\psi_{1},\cdots,\psi_{m}\rangle$ is a finitely-generated Fuchsian group of the second kind. For distinct $p_{1},p_{2},p_{3}\in\mathbb{R}\setminus\Lambda$, let $\mathcal{P}=\{p_{l,j}(t):l=0,\ldots,m \text{ and } j=1,2,3 \}$ be a family of functions such that:
\begin{itemize}
 \item for $l=1,\ldots,m$ and $j=1,2,3$, $p_{l,t}:[0,T]\rightarrow \mathbb{R}$ is a continuous function for some $T>0$;
\item for $j=1,2,3$, $p_{0,j}(0)=p_{j}$ and $p_{l,j}(0)=\psi_{l}(p_{j})$ for $l=1,\ldots, m$.
\end{itemize}
Then $\mathcal{P}$ defines a family of groups $\{\Gamma_{t}\}_{t\in[0,T]}$ such that each $\Gamma_{t}$ is the group generated by $\psi_{1,t},\ldots \psi_{m,t}$ where $\psi_{l,t}$ is the unique M\"{o}bius transformation that satisfies
\[\psi_{l,t}(p_{0,j}(t))=p_{l,j}(t)\text{ and } \psi_{l,t}(\mathbb{H})=\mathbb{H} \text{ for all } l=1\ldots,m \text{ and } j=1,2,3.\]
In particular, $\Gamma_{0}=\Gamma$.
\begin{lemma}\label{lemlast}
Suppose that $\Gamma$ is a finitely-generated Fuchsian group with no parabolic or elliptic elements. Then for any family $\mathcal{P}$ with $\Gamma_{t}$ as above, there exists $S\in(0,T]$ such that for $t\in[0,S]$, $\Gamma_{t}$ is a Fuchsian group of the second kind.
\end{lemma}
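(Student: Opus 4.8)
The plan is to view $\{\Gamma_{t}\}$ as a continuous deformation of $\Gamma=\Gamma_{0}$ inside the space of $m$-tuples of M\"{o}bius transformations preserving $\mathbb{H}$, to exhibit a ping-pong (Schottky) configuration for $\Gamma$, and to observe that the conditions defining such a configuration are open, so that $\Gamma_{t}$ stays discrete and of the second kind for $t$ small. The first input is that $t\mapsto\psi_{l,t}$ is continuous: by (\ref{heq}) the M\"{o}bius transformation determined by two ordered triples of distinct points depends rationally, hence continuously, on the six points; since the $p_{l,j}(t)$ are continuous and form triples of distinct points at $t=0$, they remain so on some interval $[0,S_{0}]$, on which $\psi_{l,t}$ is continuous in the topology of local uniform convergence, with $\psi_{l,0}=\psi_{l}$.

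The geometric input is a Schottky structure for $\Gamma$. Since $\Gamma$ is a finitely generated torsion-free Fuchsian group of the second kind it is a free group, and I would take $\psi_{1},\dots,\psi_{m}$ to be free generators of $\Gamma$ (as is implicit in the paper, cf.\ the ``hyperbolic free generators'' in Theorem~\ref{genslitRS}). Being finitely generated, of the second kind and without parabolic or elliptic elements, $\Gamma$ is convex cocompact, and a torsion-free convex cocompact Fuchsian group is a classical Schottky group: there exist pairwise disjoint closed half-discs $B_{1},B_{1}',\dots,B_{m},B_{m}'$ centred on $\mathbb{R}$ and free generators $g_{1},\dots,g_{m}$ of $\Gamma$ with $g_{i}(\overline{\mathbb{H}}\setminus\mathrm{int}\,B_{i})=\overline{B_{i}'}$ for each $i$; the complement in $\overline{\mathbb{H}}$ of $\bigcup_{i}(B_{i}\cup B_{i}')$ is a fundamental domain, and $\Lambda\subset\overline{\bigcup_{i}(B_{i}\cup B_{i}')}\cap\partial\mathbb{H}$, a proper closed subset of $\partial\mathbb{H}$. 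Because $\{\psi_{l}\}$ and $\{g_{i}\}$ are both free generating sets of the free group $\Gamma$, the change of generators is realized by an automorphism of the free group: writing $g_{i}=w_{i}(\psi_{1},\dots,\psi_{m})$ and $\psi_{l}=v_{l}(g_{1},\dots,g_{m})$ as words, the identities $v_{l}(w_{1}(x),\dots,w_{m}(x))=x_{l}$ hold formally in the free group, hence for any substitution of the $x_{i}$.

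Next I would enlarge the half-discs slightly to pairwise disjoint half-discs $\widehat{B}_{i}\supset B_{i}$ and $\widehat{B}_{i}'\supset B_{i}'$ with $g_{i}(\overline{\mathbb{H}}\setminus\mathrm{int}\,\widehat{B}_{i})\subset\mathrm{int}\,\widehat{B}_{i}'$ and a positive gap separating all $2m$ of them. For an $m$-tuple $\phi=(\phi_{1},\dots,\phi_{m})$ sufficiently close to $(\psi_{1},\dots,\psi_{m})$, set $g_{i}^{\phi}=w_{i}(\phi)$; as $g_{i}^{\phi}$ is then close to $g_{i}$, compactness of $\overline{\mathbb{H}}\setminus\mathrm{int}\,\widehat{B}_{i}$ in $\widehat{\mathbb{C}}$ together with the gap forces $g_{i}^{\phi}(\overline{\mathbb{H}}\setminus\mathrm{int}\,\widehat{B}_{i})\subset\mathrm{int}\,\widehat{B}_{i}'$, so $g_{1}^{\phi},\dots,g_{m}^{\phi}$ is again a Schottky system for the enlarged configuration. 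The ping-pong lemma then shows $\langle g_{1}^{\phi},\dots,g_{m}^{\phi}\rangle$ is free of rank $m$, discrete, with fundamental domain the complement of $\bigcup_{i}(\widehat{B}_{i}\cup\widehat{B}_{i}')$ and limit set contained in $\overline{\bigcup_{i}(\widehat{B}_{i}\cup\widehat{B}_{i}')}\cap\partial\mathbb{H}$, hence a Fuchsian group of the second kind; moreover $\langle\phi_{1},\dots,\phi_{m}\rangle=\langle g_{1}^{\phi},\dots,g_{m}^{\phi}\rangle$, because $\phi_{l}=v_{l}(g^{\phi})$ and $g_{i}^{\phi}=w_{i}(\phi)$ are the same formal word identities as for $\psi$. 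It then suffices to choose $S\in(0,S_{0}]$ so small that each $\psi_{l,t}$ with $t\in[0,S]$ lies in this neighbourhood of $\psi_{l}$, which gives the claim.

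The step requiring genuine care is the identification of $\Gamma$ as a \emph{classical} Schottky group: this rests on the structure theory of finitely generated Fuchsian groups (geometric finiteness in the absence of cusps, plus the fact that every Fuchsian Schottky group is classical), together with the requirement that the given generators be free generators, so that the substitutions $w_{i},v_{l}$ are genuine mutually inverse automorphisms rather than relations holding only in $\Gamma$. Once that is in place, the continuity used in the first paragraph and the openness of the Schottky configuration used in the third are routine. Alternatively, one can bypass the explicit ping-pong and appeal directly to the stability of convex cocompact representations (Marden, Sullivan): the set of $m$-tuples generating a convex cocompact (hence second-kind) Fuchsian group is open in $\mathcal{M}^{m}$ and contains $(\psi_{1},\dots,\psi_{m})$.
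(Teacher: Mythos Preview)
Your argument is correct and takes a genuinely different route from the paper's. The paper proceeds in two short steps: first it claims, via continuity of traces, that for small $t$ the group $\Gamma_{t}$ contains no elliptic elements (arguing that a sequence of elliptics cannot converge to a hyperbolic), and then invokes Theorem~8.1 of Beardon's \emph{The Geometry of Discrete Groups} to conclude discreteness; the second-kind property is then deduced from continuity of the hyperbolic fixed points, hence of the limit set. Your approach instead identifies $\Gamma$ as a classical Schottky group, perturbs the ping-pong configuration, and reads off discreteness, freeness, and second-kind all at once.

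The trade-off is this: the paper's proof is shorter but its first step is somewhat elliptical --- the observation that elliptics cannot converge to a hyperbolic only controls a \emph{fixed} word in the generators, and it is not spelled out how one excludes elliptics uniformly over all words in $\Gamma_{t}$ before appealing to Beardon's criterion. Your Schottky argument handles this cleanly, since the ping-pong inequalities are finitely many open conditions on the generators and immediately force the whole group to be free and discrete. Your alternative formulation via stability of convex cocompact representations is the slickest version of the same idea. You are also right to flag that the change-of-basis step requires the $\psi_{l}$ to be \emph{free} generators; this is implicit in the paper's use of ``hyperbolic free generators'' in Theorem~\ref{genslitRS}, but is worth stating, since otherwise the words $v_{l}$ inverting the $w_{i}$ need not exist.
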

\begin{proof}
By considering fixed points or traces (see \cite[Section 4.3]{MR1393195}), it is easy to see that  a sequence of elliptic M\"{o}bius transformations $(\rho_{n})$ that preserve $\mathbb{H}$ cannot converge to a hyperbolic M\"{o}bius transformation $\phi$ that preserves $\mathbb{H}$ as $n\rightarrow\infty$. This implies that there exists $S'\in(0,T]$ such that $\Gamma_{t}$ does not contain any elliptic M\"{o}bius transformations. Theorem 8.1 in \cite{MR1393195} then implies that $\Gamma_{t}$ is a Fuchsian group for $t\in[0,S']$.

Also, from the definition of  $\phi_{l,t}(z)$ in terms of cross ratios in (\ref{heq}), it is clear that $\phi_{l,t}(z)$ varies continuously in $t$. This implies that the fixed points of each $\phi_{t}\in\Gamma_{t}$ also vary continuously in $t$. Since the limit set of $\Gamma_{t}$ is the closure of the hyperbolic fixed points of elements in $\Gamma_{t}$ (see Section 2), this implies that we can find $S\in(0,S']$ such that $\Gamma_{t}$ is a Fuchsian group of the second kind  for all $t\in[0,S]$.
\end{proof}
\begin{theorem}\label{converseRS}
Suppose that $\lambda,\xi\in C([0,\infty))$. Let $\Gamma=\langle\psi_{1},\cdots,\psi_{m}\rangle$ be a finitely-generated Fuchsian group of the second kind with no parabolic or elliptic elements such that $\xi(0)$ is not contained in the limit set $\Lambda$ of $\Gamma$. Also, let $c\in\mathbb{R}\setminus\Lambda$.

Take distinct points $p_{1},p_{2},p_{3}\in \mathbb{R}\setminus\Lambda$  such that $p_{1},p_{2},p_{3}\neq\phi(\xi(0))$ for all $\phi\in\Gamma$. Consider the system of differential equations
\[\dot{p}_{l,j}(t)=-P(p_{l,j}(t),t)\]
for $l=0,\ldots,m$ and $j=1,2,3$, where
\normalsize\[P(z,t)=\frac{\lambda(t)+\sum_{\phi_{t}\in\Gamma_{t}}\left[\frac{2\sigma_{t}}{\phi_{t}(z)-\xi(t)} + \sum_{k=1}^{n} \delta_{k}(t)\left(\frac{\dot{\phi}_{t}(z)-\dot{\phi}_{k,t}(c)}{\phi_{t}(z)-\phi_{k,t}(c)}-\frac{\dot{\phi}_{t}(z)}{\phi_{t}(z)-c} \right)  \right]}{\sum_{\phi_{t}\in\Gamma_{t}}\sum_{k=1}^{n}\delta_{k}(t)\left(\frac{\phi_{t}'(z)}{\phi_{t}(z)-\phi_{k,t}(c)}-\frac{\phi_{t}'(z)}{\phi_{t}(z)-c}\right)}\]\normalsize
with initial conditions given by $p_{0,j}(0)= p_{j}$ and $p_{l,j}(0)=\psi_{l}(p_{j})$ for $l=1,\ldots m$ and $j=1,2,3$. Here
\[\Gamma_{t}=\left\langle\psi_{1,t},\ldots,\psi_{m,t}\right\rangle,\]
and $\psi_{l,t}$ is the unique M\"{o}bius transformation that preserves $\mathbb{H}$ and satisfies $\psi_{l,t}(p_{0,j}(t))=p_{l,j}(t)$ for $l=1,\ldots,m$ and $j=1,2,3$; $\phi_{1,t},\ldots,\phi_{n,t}$ are the hyperbolic free generators of $\Gamma_{t}$; $\delta_{1}(t),\ldots, \delta_{n}(t)$ are chosen such that the sum in the numerator of $P(z,t)$ converges; and
\[\sigma_{t}=-\sum_{\phi_{t}\in\Gamma_{t}}\sum_{k=1}^{n}\delta_{k}(t)\left(\frac{\phi_{t}'(\xi(t))}{\phi_{t}(\xi(t))-\phi_{k,t}(c)}-\frac{\phi_{t}'(\xi(t))}{\phi_{t}(\xi(t))-c}\right).\]
  Note that $P(z,t)$ can be given explicitly in terms of $\xi(t)$, $\lambda(t)$ and  $p_{l,j}(t)$ for $l=0,\ldots,m$ and $j=1,2,3$.

Let $T_{p_{1},p_{2},p_{3}}$ be the largest time such that the above system of differential equations has solution $\{p_{l,j}(t)\}$ for $l=0,\ldots,m$ and $j=1,2,3$ and let
\[T=\sup_{(p_{1},p_{2},p_{3})} T_{p_{1},p_{2},p_{3}},\]
where the supremum is taken over all triples $(p_{1},p_{2},p_{3})$ of distinct points with $p_{1},p_{2},p_{3}\in \mathbb{R}\setminus\Lambda$ and $p_{1},p_{2},p_{3}\neq\phi(\xi(0))$ for all $\phi\in\Gamma$. Then $\Gamma_{t}$ is well-defined for $t\in[0,T]$. Suppose that $g_{t}(z)$ is the solution to the differential equation
\[\dot{g}_{t}(z)=-P(g_{t}(z),t)\]
with initial condition $g_{0}(z)\equiv z$. Let $T_{z}$ be the supremum of all $t\in[0,T]$ such that the solution is well-defined up to time $t$ with $g_{t}(z)\in\mathbb{H}$. Then, for $t\in[0,T]$, the function $g_{t}$ is a conformal map of $H_{t}=\{z:T_{z}>t\}$ onto $\mathbb{H}$ such that
\[g_{t}\circ\Gamma \circ g_{t}^{-1}=\Gamma_{t}\]
for all $t<T$. Also, $H_{t}$ is $\Gamma$-invariant.
\end{theorem}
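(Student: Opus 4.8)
The plan is to mimic the proof of Theorem~\ref{converseannulus}, the two genuinely new points being that the target group $\Gamma_{t}$ is not known in advance but must be reconstructed from the evolving configuration $\{p_{l,j}(t)\}$, and that the conjugation relation $g_{t}\circ\Gamma\circ g_{t}^{-1}=\Gamma_{t}$ must be deduced from the covariance of $P$ rather than being imposed. First I would check that, for $t<T$, the group $\Gamma_{t}$ is well-defined: the $p_{l,j}(t)$ exist by definition of $T$, so each $\psi_{l,t}$ --- the unique $\mathbb{H}$-preserving M\"{o}bius map with $\psi_{l,t}(p_{0,j}(t))=p_{l,j}(t)$, given explicitly by the cross-ratio formula \eqref{heq} --- is well-defined and differentiable in $t$, hence so is every word in the $\psi_{l,t}$, in particular the hyperbolic free generators $\phi_{1,t},\ldots,\phi_{n,t}$ of $\Gamma_{t}$. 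Since $\Gamma_{0}=\Gamma$ has no elliptic or parabolic elements, Lemma~\ref{lemlast} (shrinking $T$ if necessary) gives that each $\Gamma_{t}$ is a Fuchsian group of the second kind, and continuity of the fixed points keeps $\infty\notin\Lambda_{t}$ and $c\notin\Lambda_{t}$; thus the $\delta_{k}(t)$ (chosen as in Lemma~\ref{lem8}), $\sigma_{t}$, and the whole expression for $P(z,t)$ make sense, and $P(z,t)$ becomes an explicit function of $z,t,\xi,\lambda$ and the $p_{l,j}$. From its form, $P(\cdot,t)$ is holomorphic on $\mathbb{H}$, its poles lying on $\mathbb{R}$ at the $\Gamma_{t}$-orbit of $\xi(t)$ with residue $-2$ at $\xi(t)$, and $P(\zeta,t)$ is real for $\zeta\in\mathbb{R}$ off $\Lambda_{t}$ and off that orbit --- the only properties of $P$ I will use below, exactly as in the annulus case.

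Next I would establish the mapping properties of $g_{t}$. Because $P$ is real on the real axis away from its poles, each $p_{0,j}(t)$ stays in $\mathbb{R}$, so $g_{t}$ extends by Schwarz reflection across a neighbourhood of $p_{j}$ and $p_{0,j}(t)=g_{t}(p_{j})$, $p_{l,j}(t)=g_{t}(\psi_{l}(p_{j}))$. For injectivity I would write $P(\zeta,t)-P(\omega,t)=(\zeta-\omega)\Psi_{t}(\zeta,\omega)$ with $\Psi_{t}$ holomorphic on $\mathbb{H}\times\mathbb{H}$; then $\Delta_{t}(z,w)=g_{t}(z)-g_{t}(w)$ satisfies $\dot{\Delta}_{t}=-\Delta_{t}\,\Psi_{t}(g_{t}(z),g_{t}(w))$, so $\Delta_{t}(z,w)=(z-w)\exp\!\left[-\int_{0}^{t}\Psi_{s}(g_{s}(z),g_{s}(w))\,ds\right]\neq0$ for $z\neq w$; together with holomorphy of $P(\cdot,t)$ on $\mathbb{H}$ and continuous dependence of the flow on initial data, this shows $g_{t}$ is an injective analytic map of the open set $H_{t}=\{z:T_{z}>t\}$ into $\mathbb{H}$. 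For surjectivity I would fix $t_{0}$ and $w\in\mathbb{H}$ and run the backward flow $\dot{h}_{s}(w)=P(h_{s}(w),t_{0}-s)$, $h_{0}(w)=w$: near $\xi(t)$ the residue $-2$ gives $\mathrm{Im}[\dot{h}_{s}(w)]\asymp 2\,\mathrm{Im}[h_{s}(w)]/|h_{s}(w)-\xi(t_{0}-s)|^{2}>0$, and likewise at each $\phi_{t_{0}-s}(\xi(t_{0}-s))$, so $h_{s}(w)$ is repelled from all the poles; since these accumulate only on $\Lambda_{t_{0}-s}\subset\mathbb{R}$ the orbit stays in a compact subset of $\mathbb{H}$, so the solution exists on $[0,t_{0}]$, and since $\mathrm{Im}[P(\zeta,t_{0}-s)]=0$ for real $\zeta$ off the poles and off $\Lambda$ it cannot reach $\mathbb{R}$. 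Setting $w'=h_{t_{0}}(w)$, uniqueness of solutions gives $g_{s}(w')=h_{t_{0}-s}(w)$, so $T_{w'}>t_{0}$ and $g_{t_{0}}(w')=w$; hence $g_{t}$ maps $H_{t}$ conformally onto $\mathbb{H}$.

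Then I would prove the conjugation and $\Gamma$-invariance. Because $P$ is built from sums over $\Gamma_{t}$, reindexing by $\phi_{t}\mapsto\phi_{t}\circ\rho_{t}$ (as in the proof of Theorem~\ref{genslitRS}) yields the covariance identity $P(\rho_{t}(z),t)=\rho_{t}'(z)P(z,t)-\dot{\rho}_{t}(z)$ for all $\rho_{t}\in\Gamma_{t}$. For a generator $\psi_{l}$ and $z\in H_{t}$, differentiating $t\mapsto\psi_{l,t}(g_{t}(z))$ and using this identity gives
\[\frac{d}{dt}\psi_{l,t}(g_{t}(z))=\dot{\psi}_{l,t}(g_{t}(z))-\psi_{l,t}'(g_{t}(z))\,P(g_{t}(z),t)=-P\bigl(\psi_{l,t}(g_{t}(z)),t\bigr),\]
so $t\mapsto\psi_{l,t}(g_{t}(z))$ and $t\mapsto g_{t}(\psi_{l}(z))$ solve the same ODE, and at $t=0$ both equal $\psi_{l}(z)$ (since $\psi_{l,0}=\psi_{l}$, the M\"{o}bius map fixing the triple $p_{j}\mapsto\psi_{l}(p_{j})$). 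Uniqueness then forces $\psi_{l,t}\circ g_{t}=g_{t}\circ\psi_{l}$, hence $T_{\psi_{l}(z)}=T_{z}$ and $g_{t}\circ\psi_{l}\circ g_{t}^{-1}=\psi_{l,t}$; composing over words shows $g_{t}\circ\Gamma\circ g_{t}^{-1}=\Gamma_{t}$ and $T_{\phi(z)}=T_{z}$, i.e. $H_{t}$ is $\Gamma$-invariant.

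The hard part will be the consistency issue underlying the first and third steps: unlike the simply-connected case, or the annulus where $\Gamma_{t}$ is rigid up to reparametrisation, here $\Gamma_{t}$ is obtained only as the solution of the finite-dimensional system for $\{p_{l,j}(t)\}$, so I must make sure this system stays consistent on $[0,T]$ --- the $p_{l,j}(t)$ remaining distinct and off $\Lambda_{t}$, $\Gamma_{t}$ remaining discrete and of the second kind, and $c$ remaining off $\Lambda_{t}$ --- which is precisely the ``path in Teichm\"{u}ller space'' bookkeeping discussed before the statement; only then am I entitled to assert that the $P$ assembled from this moving group is the covariant field whose Loewner flow realizes the conjugation. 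The analytic estimates in the middle step are essentially those of Theorem~\ref{converseannulus}.
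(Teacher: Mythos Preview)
Your proposal is correct in outline and follows the paper's approach closely: the injectivity argument via $\Delta_{t}$, the surjectivity via the backward flow with the residue~$-2$ repelling the orbit from the poles, and the conjugation via the covariance identity $P(\rho_{t}(z),t)=\rho_{t}'(z)P(z,t)-\dot{\rho}_{t}(z)$ are exactly what the paper does.

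There is, however, one step the paper makes explicit that you skip. Since $T=\sup_{(p_{1},p_{2},p_{3})}T_{p_{1},p_{2},p_{3}}$, the assertion ``$\Gamma_{t}$ is well-defined for $t\in[0,T]$'' requires showing that two different admissible triples produce the \emph{same} group $\Gamma_{t}$ (and the same $g_{t}$) on their common time interval; otherwise for a given $t<T$ you only know some triple works, not that the outcome is independent of which. Your first paragraph assumes this (``the $p_{l,j}(t)$ exist by definition of $T$'') and your ``hard part'' paragraph lists other consistency checks but not this one. The paper closes this gap at the end of its proof: having built $g_{t}$ and $\Gamma_{t}$ from the first triple and established $\Gamma_{t}=g_{t}\circ\Gamma\circ g_{t}^{-1}$, it observes that the functions $q_{0,j}(t):=g_{t}(\widetilde{p}_{j})$ and $q_{l,j}(t):=g_{t}(\psi_{l}(\widetilde{p}_{j}))$ satisfy the very same closed ODE system $\dot{w}_{l,j}=-P_{w}(w_{l,j},t)$ as the second triple's $\widetilde{p}_{l,j}(t)$, because the group reconstructed from the $q_{l,j}(t)$ is already $\Gamma_{t}$ (by the conjugation relation), hence gives the same $P$; uniqueness then forces $q_{l,j}(t)=\widetilde{p}_{l,j}(t)$, so both triples yield the same $\Gamma_{t}$ and the same $g_{t}$. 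You already have the two ingredients for this---your observation $p_{l,j}(t)=g_{t}(\psi_{l}(p_{j}))$ and the conjugation---so you only need to assemble them into this compatibility argument to finish.
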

\begin{proof}
First note that $\delta_{1}(t),\ldots,\delta_{n}(t)$ can be determined by solving the system of linear equations given in Lemma \ref{lem8}, and, in particular, they can be given in terms of $\xi(t)$, $\lambda(t)$ and  $p_{l,j}(t)$ for $l=0,\ldots,m$ and $j=1,2,3$.

Now, Lemma \ref{lemlast} implies that we must have $T_{p_{1},p_{2},p_{3}} >0$. Then for $t\in [0,T_{p_{1},p_{2},p_{3}}]$, the same argument as in the proof of Theorem \ref{converseannulus} implies that $g_{t}(z)$ is continuous, one-to-one, and differentiable and $g_{t}(H_{t})\subset \mathbb{H}$. Note that, $P(z,t)$ satisfies
\begin{equation}P(\phi_{t}(z),t)=\phi_{t}'(z)P(z,t)-\dot{\phi}_{t}(z) \text{ for }\phi_{t}\in\Gamma_{t}.\label{lek}\end{equation}
Since the residue of $P(z,t)$ at $z=\xi(t)$ is equal to $-2$, (\ref{lek}) implies that the residue of $P(z,t)$ at each point in the $\Gamma_{t}$-orbit of $\xi(t)$ is also negative. Hence, by considering the inverse flow as in the proof of Theorem \ref{converseannulus}, we can show that $g_{t}(H_{t})=\mathbb{H}$.

Now, for some $l=1,\ldots,m$, let $\widetilde{g}_{t}(z)=\psi_{l,t}(g_{t}(z))$. Then (\ref{lek}) implies that
\[\dot{\widetilde{g}}_{t}(z)=-P(\widetilde{g}_{t}(z),t)\]
and $\widetilde{g}_{0}(z)=\psi_{l}(z)$. Then the  uniqueness of solution of ordinary differential equations implies that
\[\widetilde{g}_{t}(z)=g_{t}(\psi_{l}(z))\]
and hence
\[g_{t}\circ\psi_{l}\circ g_{t}^{-1}=\psi_{l,t}\]
for $l=1,\ldots,m$. This implies that $\Gamma_{t}=g_{t}\circ \Gamma \circ g_{t}^{-1}$ and also that $H_{t}$ is $\Gamma$-invariant.

Finally, we need to show that if we choose two triples $(p_{1},p_{2},p_{3})$ and $(\widetilde{p}_{1},\widetilde{p}_{2},\widetilde{p}_{3})$, then we generate the same $\Gamma_{t}$ and $g_{t}$ for $0<t<\min(T_{p_{1},p_{2},p_{3}},T_{\widetilde{p}_{1},\widetilde{p}_{2},\widetilde{p}_{3}})$. So suppose that we run the above argument with $(p_{1},p_{2},p_{3})$ and $(\widetilde{p}_{1},\widetilde{p}_{2},\widetilde{p}_{3})$ to obtain $\{p_{l,j}(t)\}$ and $\{\widetilde{p}_{l,j}(t)\}$  respectively and we use $\{p_{l,j}(t)\}$ to obtain $\Gamma_{t}$ and $g_{t}$ as above. Then we note that $g_{t}(\widetilde{p}_{j})$ satisfies the same differential equation as $\widetilde{p}_{0,j}(t)$ and $g_{t}(\psi_{l}(\widetilde{p}_{j}))$ satisfies the same differential equation as $\widetilde{p}_{l,j}(t)$ for each $l=1,\ldots,m$ and $j=1,2,3$. This implies that for $l=1,\ldots,m$ and $j=1,2,3$, we must have
\[ g_{t}(\widetilde{p}_{j})=\widetilde{p}_{0,j}(t),\]
\[g_{t}(\psi_{l}(\widetilde{p}_{j}))=\widetilde{p}_{l,j}(t).\]
Then the fact that
$\Gamma_{t}=g_{t}\circ \Gamma \circ g_{t}^{-1}$ implies that the family of Fuchsian groups generated by $(p_{1},p_{2},p_{3})$ and $(\widetilde{p}_{1},\widetilde{p}_{2},\widetilde{p}_{3})$ are both the same and hence they both give rise to the same conformal map $g_{t}$. Hence we can define $\Gamma_{t}$ and $g_{t}$ for all $t\in[0,T]$, where
\[T=\sup_{p_{1},p_{2},p_{3}} T_{p_{1},p_{2},p_{3}}.\]
where the supremum is taken over all triples $(p_{1},p_{2},p_{3})$ of distinct points with $p_{1},p_{2},p_{3}\in \mathbb{R}\setminus\Lambda$ and $p_{1},p_{2},p_{3}\neq\phi(\xi(0))$ for all $\phi\in\Gamma$.
\end{proof}
\begin{remark}\
\begin{enumerate}
\item[(i)] As with the simply-connected versions of the Loewner differential equation, we note that the Riemann surfaces $\pi(H_{t})$ (where $\pi:\mathbb{H}\rightarrow \mathbb{H}\setminus\Gamma$ is the quotient map) are not necessarily obtained from $\mathbb{H}/\Gamma$ by removing a curve growing from the boundary (see the example of a spiral constructed by Marshall and Rohde in \cite{MR2163382}).
\item[(ii)] If $\Gamma$ is a Fuchsian group of the second kind with no elliptic elements such that $\mathbb{H}/\Gamma$ is a Riemann surface with boundary, we can obtain the same result by approximating $\Gamma$ with Fuchsian groups of the second kind that only contain hyperbolic M\"{o}bius transformations.
\end{enumerate}
\end{remark}
\section{Some remarks on stochastic Loewner evolution}
As mentioned in the introduction, the motivation of the results in this paper is that we want to define a version of stochastic Loewner evolution on multiply-connected domains and Riemann surfaces. We now give an informal overview of how this can be done. Firstly, note that in the simply-connected case, there are two natural normalizations and parameterizations for the conformal mappings $f_{t}$ (or $g_{t}=f_{t}^{-1}$) which result in the chordal and radial versions of the Loewner differential equation (see \cite[Sections 4.1 and 4.2]{MR2129588}). Let us restrict our attention to the chordal Loewner differential equation:
\[\dot{g}_{t}(z)=\frac{2}{g_{t}(z)-\xi(t)}.\]
Suppose that $\gamma:[0,\infty]\rightarrow\overline{\mathbb{H}}$ is a simple curve with $\gamma(0,\infty)\in\mathbb{H}$ and $\gamma(0)=0,\gamma(\infty)=\infty$. If $\gamma$ is conformally invariant, then the driving function of the chordal Loewner differential equation $\xi(t)$ is forced to be $\sqrt{\kappa}B_{t}$ for some $\kappa\geq0$ where $B_{t}$ is standard 1-dimensional Brownian motion (see \cite[p. 147]{MR2129588}). We define the solution $g_{t}$ to the chordal Loewner differential equation with $\xi(t)=\sqrt{\kappa}B_{t}$ to be chordal stochastic Loewner evolution with parameter $\kappa$ (chordal SLE$_{\kappa}$). It can be shown that, for any $\kappa\geq0$, chordal SLE$_{\kappa}$ is almost surely generated by a curve $\gamma:[0,\infty)\rightarrow \overline{\mathbb{H}}$  that does not cross itself (see \cite[Theorem 6.3]{MR2129588}).

Now suppose that $\Gamma$ is a Fuchsian group preserving $\mathbb{H}$ such that $\mathbb{H}/\Gamma$ is a Riemann surface with boundary. In this case, there is typically no ``natural'' normalization for the Loewner chain corresponding to a curve $\gamma$ on $\mathbb{H}/\Gamma$. In addition, the argument in the simply-connected case no longer works and hence there is no ``fixed'' choice for the functions $\xi(t)$ and $\lambda(t)$ in Theorem \ref{converseRS}. However, we do expect a SLE on a Riemann surface to ``look like'' a chordal SLE$_{\kappa}$ and this suggests that $\xi_{t}=\xi(t)$ should satisfy the stochastic differential equation
\[d\xi_{t}=\sqrt{\kappa}dB_{t} + h(t)dt\]
for some function $h(t)$. Thus we can define many versions of stochastic Loewner evolutions on $\mathbb{H}/\Gamma$ by choosing different functions for $h(t)$ and $\lambda(t)$. SLE can then be defined on any Riemann surface $R$ that is conformally equivalent to $\mathbb{H}/\Gamma$. The problem then becomes one of finding the right $h(t)$ and $\lambda(t)$ for a particular application.

\bibliographystyle{plain}

\end{document}